\renewcommand{\leq}{\leqslant}
\renewcommand{\geq}{\geqslant}
\newcommand{\ptl}{\partial}
\newcommand{\rr}{{\mathbb{R}}}
\newcommand{\nn}{{\mathbb{N}}}
\newcommand{\la}{\lambda}
\newcommand{\hh}{{\mathbb{H}}}
\newcommand{\sph}{{\mathbb{S}}}
\newcommand{\ele}{\mathcal{L}}
\newcommand{\cil}{\mathcal{C}}
\newcommand{\sub}{\subset}
\newcommand{\subeq}{\subseteq}
\newcommand{\escpr}[1]{\big<#1\big>}
\newcommand{\sg}{\sigma}
\newcommand{\Om}{\Omega}
\newcommand{\var}{\varphi}
\newcommand{\ric}{\text{Ric}}
\newcommand{\ovh}{\overline{H}_P}
\newcommand{\C}{\operatorname{Cap}}
\DeclareMathOperator{\divv}{div}
\newtheorem{theorem}{Theorem}[section]
\newtheorem{proposition}[theorem]{Proposition}
\newtheorem{lemma}[theorem]{Lemma}
\newtheorem{corollary}[theorem]{Corollary}
\theoremstyle{definition}
\newtheorem{remark}[theorem]{Remark}
\newtheorem{remarks}[theorem]{Remarks}
\newtheorem{example}[theorem]{Example}
\newtheorem{examples}[theorem]{Examples}
\numberwithin{equation}{section}
\begin{document}

\title[Parabolicity criteria and characterization of submanifolds]
{Parabolicity criteria and characterization results \\ for submanifolds of bounded mean curvature \\ in model manifolds with weights}

\author[A. Hurtado]{A. Hurtado$^{\natural}$}
\address{Departamento de Geometr\'{\i}a y Topolog\'{\i}a and Excellence Research Unit ``Modeling Nature'' (MNat), Universidad de Granada, E-18071,
Spain.}
 \email{ahurtado@ugr.es}

\author[V. Palmer]{V. Palmer*}
\address{Departament de Matem\`{a}tiques, Universitat Jaume I, Castell\'o,
Spain.} 
\email{palmer@mat.uji.es}

\author[C. Rosales]{C. Rosales$^{\#}$}
\address{Departamento de Geometr\'{\i}a y Topolog\'{\i}a and Excellence Research Unit ``Modeling Nature'' (MNat) Universidad de Granada, E-18071,
Spain.} 
\email{crosales@ugr.es}

\date{\today}

\thanks{* Supported by MINECO grant No.~MTM2017-84851-C2-2-P and UJI grant UJI-B2016-07. \\
\indent $^{\natural}$ $^{\#}$ Supported by MINECO grant No.~
MTM2017-84851-C2-1-P and Junta de Andaluc\'ia grant No.~FQM325} 

\subjclass[2010]{31C12, 53C42, 35J25}

\keywords{Rotationally symmetric spaces, weights, submanifolds, Laplacian, parabolicity, capacity, mean curvature, half-space theorem, Bernstein theorem}

\begin{abstract} 
Let $P$ be a submanifold properly immersed in a rotationally symmetric manifold having a pole and endowed with a weight $e^h$. The aim of this paper is twofold. First, by assuming certain control on the $h$-mean curvature of $P$, we establish comparisons for the $h$-capacity of extrinsic balls in $P$, from which we deduce criteria ensuring the $h$-parabolicity or $h$-hyperbolicity of $P$. Second, we employ functions with geometric meaning to describe submanifolds of bounded $h$-mean curvature which are confined into some regions of the ambient manifold. As a consequence, we derive half-space and Bernstein-type theorems generalizing previous ones. Our results apply for some relevant $h$-minimal submanifolds appearing in the singularity theory of the mean curvature flow.
\end{abstract}

\maketitle

\thispagestyle{empty}

\section{Introduction}
\label{sec:intro}

\emph{Weighted manifolds} (also known as \emph{manifolds with density}) are Riemannian manifolds where a smooth positive function weights the Hausdorff measures associated to the Riemannian distance. These provide a generalization of Riemannian geometry appearing in different contexts and recently studied by many authors, see Morgan's book \cite[Ch.~18]{gmt} for a nice introduction. 

In this paper we will consider weights in rotationally symmetric manifolds with a pole (that we call \emph{model spaces}, see Section~\ref{subsec:models} for a precise definition). We will follow two objectives. The first one is to establish some criteria ensuring parabolicity or hyperbolicity of submanifolds in a weighted setting. These will be obtained from capacity comparisons with respect to \emph{weighted model spaces}, which are model spaces together with a \emph{radial weight}. The second objective is to prove geometric restrictions and characterization theorems for submanifolds with controlled weighted mean curvature. These will be deduced in a unified way by applying the defining property of parabolic submanifolds with suitable functions having a geometric meaning. In order to describe our results in more detail we need to introduce some background and motivations.  

Following the classical definition in potential theory, the \emph{$h$-parabolicity} condition in a Riemannian manifold $M$ with a weight $e^h$ is the Liouville-type property that any function $u$ bounded from above and \emph{$h$-subharmonic}, i.e., $\Delta^hu\geq 0$ must be constant. Here $\Delta^h u$ is the \emph{weighted Laplacian operator}, which is defined in Grigor'yan~\cite[Sect.~2.1]{grigoryan2} as the sum of the Laplace-Beltrami operator $\Delta u$ in $M$ and the first order term $\escpr{\nabla h,\nabla u}$. If the parabolicity condition fails then $M$ is said to be \emph{$h$-hyperbolic}. Clearly, for a constant weight these notions agree with the standard ones for Riemannian manifolds. Indeed, as happens in the unweighted setting, the $h$-parabolicity of $M$ is characterized by the fact that $\text{Cap}^h(K)=0$ for any / some compact set $K\subeq M$ with non-empty interior, see Grigor'yan and Saloff-Coste~\cite[Sect.~1.7]{grigoryan3} and the references therein. Here $\text{Cap}^h(K)$ denotes the \emph{$h$-capacity} of $K$ defined as $\lim_{k\to\infty}\text{Cap}^h(K,\Omega_k)$, where $\{\Om_k\}_{k\in\nn}$ is any exhaustion of $M$ by smooth precompact open sets, and $\text{Cap}^h(K,\Om_k)$ is the $h$-capacity defined in \eqref{eq:capacity} of the capacitor $(K,\Om_k)$. The value of $\text{Cap}^h(K,\Om_k)$ can be calculated by means of equality \eqref{eq:capint} involving the \emph{$h$-capacity potential}, which is the solution to the weighted Laplace equation with Dirichlet boundary condition appearing in \eqref{eq:laplace}. This characterization is extremely useful and allows to deduce parabolicity criteria for weighted manifolds relying on growth properties of the weighted measures, see for instance Grigor'yan \cite[Sect.~9.1]{grigoryan2}, and Grigor'yan and Masamune~\cite[Sect.~1]{gri-masa}. In particular, $M$ is $h$-parabolic provided it has finite weighted volume. In the case of a weighted model space it is possible to compute the $h$-capacity for open metric balls centered at the pole, see Proposition~\ref{prop:epm}. As a consequence, we recover in Corollary~\ref{cor:ahlfors} an Ahlfors-type result, already showed by Grigor'yan~\cite[Ex.~9.5]{grigoryan2}, which describes the parabolicity of a weighted model space by means of an integrability condition for the weighted area function of the metric spheres centered at the pole. As an application of this principle we illustrate that a non-compact manifold $M$ may have weights $e^{h_1}$ and $e^{h_2}$ such that $M$ is, at the same time, $h_1$-parabolic and $h_2$-hyperbolic.

In contrast to the aforementioned criteria, which are of \emph{intrinsic} nature, in this paper we study the parabolicity property from the \emph{extrinsic} point of view. More precisely, given a submanifold with empty boundary $P$ properly immersed in a Riemannian manifold $M$ with a weight $e^h$, we seek sufficient restrictions on $h$ and the weighted extrinsic geometry of $P$ to guarantee that $P$ is $h$-parabolic or $h$-hyperbolic when we see it as a weighted manifold with respect to the Riemannian metric and weight inherited from $M$. In the Riemannian setting there are several results in this line, see the expository article \cite{Pa2} of the second author for a very complete account. These results are derived from a geometric analysis of the distance function which allows to estimate the Laplacian of radial functions by assuming suitable bounds on the mean curvature vector of $P$ and the intrinsic curvatures of $M$. One of our motivations in this work is to generalize this technique in order to prove similar statements for weighted manifolds, then showing the fundamental role of the weighted mean curvature vector in determining the $h$-parabolicity or $h$-hyperbolicity of $P$.

The \emph{weighted mean curvature vector} $\ovh^h$ of a submanifold $P$ is 
the normal vector to $P$ defined in \eqref{eq:mcvector} in terms of the Riemannian mean curvature vector $\ovh$ and the normal projection to $P$ of $\nabla h$. This was first introduced for two-sided hypersurfaces by Gromov~\cite[Sect.~9.4.E]{gromov-GAFA}, see also Bayle~\cite[Sect.~3.4.2]{bayle-thesis}, by means of variational arguments. As in Riemannian geometry, the weighted mean curvature controls the weighted extrinsic geometry of $P$, and it is interesting for several reasons. On the one hand, see for instance \cite[Sect.~3]{rcbm} and \cite[Sect.~3]{castro-rosales}, hypersurfaces with vanishing (resp. constant) weighted mean curvature are the critical points for the Plateau problem (resp. isoperimetric problem), where we try to minimize the weighted area under a constraint on the boundary (resp. weighted volume). On the other hand, there is an important connection between \emph{weighted minimal submanifolds} (those with $\ovh^h=0$) and the singularity theory of the mean curvature flow. It was observed by Colding and Minicozzi~\cite[Lem.~2.2]{colding-minicozzi}, \cite[Sect.~1.1]{cm2} that the self-similar solutions to the mean curvature flow in $\rr^m$ called \emph{self-shrinkers} satisfy equation $(m-1)\,H_P(p)=\escpr{p,N(p)}$ involving the Euclidean mean curvature $H_P$ and the normal $N$ of a hypersurface $P$. Thus, it follows from \eqref{eq:fmc} that the self-shrinker hypersurfaces coincide with the minimal hypersurfaces in $\rr^m$ with respect to the Gaussian weight $e^{-|p|^2/2}$. In a similar way, the self-similar solutions called \emph{self-expanders}, and those hypersurfaces for which the evolution under the flow is given by translations (the so-called \emph{translating solitons}), are minimal submanifolds in $\rr^m$ with a suitable weight, see Example~\ref{ex:singularity} for more details. 

Coming back to the relation between the weighted mean curvature vector $\ovh^h$ and the $h$-parabolicity of $P$, we must mention some previous results in this direction. In \cite[Thm.~4.1]{cheng-zhou-volume}, Cheng and Zhou proved that an $n$-dimensional self-shrinker properly immersed in $\rr^m$ has finite weighted $n$-dimensional volume and so, it is weighted parabolic. This fact was later extended by Cheng, Mejia and Zhou~\cite[Cor.~1]{cmz} to weighted minimal submanifolds properly immersed in certain shrinking gradient Ricci solitons (complete weighted manifolds such that $\text{Ric}-\nabla^2h=c\,g$, where $\nabla^2$ denotes the Hessian, $\text{Ric}$ is the Ricci tensor, $g$ is the Riemannian metric and $c$ is a positive constant). As recently shown by Alencar and Rocha~\cite[Thm.~1]{alencar-rocha}, the result still holds for a properly immersed submanifold $P$ where the function $\escpr{\ovh^h,\nabla h}$ is bounded from above. 

In Section~\ref{sec:criteria} we provide new parabolicity and hyperbolicity criteria for submanifolds in a weighted context. The main statements are contained in Theorems~\ref{parabolicity-sub-model} and \ref{hyperbolicity-sub-model}, which might be seen as weighted counterparts to analogous results for Riemannian manifolds proved by Esteve and the second author~\cite[Thm.~3.4]{esteve-palmer}, and by Markvorsen and the second author,  see \cite[Thm.~2.1]{mp-GAFA} and \cite[Thm.~A]{mp-transience}, respectively. In order to motivate the hypotheses of both theorems we will briefly explain their proof. Given a submanifold $P$ properly immersed in a model space with weight $e^h$, we want to estimate the $h$-capacity over the exhaustion of $P$ associated to extrinsic balls centered at the pole. For this, we compare the $h$-capacity potentials for extrinsic capacitors in $P$ associated to concentric balls with the radial functions obtained by transplanting to $P$, via the distance function $r(p)$ with respect to the pole, the capacity potentials computed in Proposition~\ref{prop:epm} for intrinsic capacitors associated to concentric balls in a suitable comparison weighted model. To choose such a model we make use of Lemma~\ref{equality-laplace-h}, which illustrates how to control the weighted Laplacian of radial functions restricted to $P$ by means of a radial estimate for the function $\escpr{\nabla h,\nabla r}+\escpr{\ovh^h,\nabla r}$, which involves the radial components of the vector fields $\nabla h$ and $\ovh^h$, and a balance condition between this radial estimate and the Riemannian mean curvature of the metric spheres centered at the pole. Once the comparison model is determined we invoke the Ahlfors-type criterion in Corollary~\ref{cor:ahlfors} to infer the $h$-parabolicity or $h$-hyperbolicity of $P$ from the corresponding integrability hypothesis for the weighted area of the spheres centered at the pole.

Our parabolicity and hyperbolicity criteria can be applied to several interesting situations. They are valid for submanifolds of any codimension properly immersed in model spaces (like Euclidean space, hyperbolic space or convex paraboloids of revolution), and having bounded mean curvature vector with respect to some relevant weights. In Corollaries~\ref{cor:parabolicity} and \ref{cor:radial2} we provide some consequences for (eventually perturbed) radial weights. In $\rr^m$ these weights have received an increasing attention in the last years, specially in relation to isoperimetric problems and rigidity properties for the self-similar solutions of the mean curvature flow, see for instance \cite{lcdc}, \cite{cao-li}, \cite{rosales-gauss}, \cite{bcm3}, \cite{brendle}, \cite{cz-expanders} and \cite{chambers}. From Corollary~\ref{cor:parabolicity} we deduce the $h$-parabolicity (resp. $h$-hyperbolicity) of any non-compact submanifold properly immersed in Euclidean space $\rr^m$ or hyperbolic space $\mathbb{H}^m$, and having bounded mean curvature with respect to a radial weight $e^{f(r)}$ such that $f'(t)\to-\infty$ (resp. $f(t)\to\infty$) when $t\to\infty$. In the particular case of $\rr^m$ with Gaussian weight $e^{-|p|^2/2}$, this extends the previously mentioned parabolicity result of Cheng and Zhou for self-shrinkers \cite[Thm.~4.1]{cheng-zhou-volume}. Indeed, it also follows that hypersurfaces of constant weighted mean curvature $\la$ for the Gaussian weight (usually called \emph{$\la$-hypersurfaces}) are weighted parabolic. Moreover, in $\rr^m$ with (anti)Gaussian weight $e^{|p|^2/2}$ our result, together with the non-existence of compact self-expanders, see Cao and Li~\cite[Prop.~5.3]{cao-li}, implies that any properly immersed self-expander is weighted hyperbolic. It is interesting to observe that our criteria entail the existence of several weights in $\rr^m$ and $\mathbb{H}^m$ (as the radial ones above) for which \emph{all the properly immersed submanifolds with bounded $h$-mean curvature are $h$-parabolic, independently of their dimension}. This is in clear contrast to the unweighted setting, where the parabolicity condition for non-compact submanifolds is much more restrictive. For example, it is known by a result of Markvorsen and the second author~\cite[Thm.~2.1]{mp-GAFA} that all the minimal submanifolds in $\rr^m$ (resp. $\mathbb{H}^m$) of dimension $n\geq 3$ (resp. $n\geq 2$) are hyperbolic. 

Once we have shown abundance of parabolic submanifolds, our next aim is to infer information about them by employing the Liouville-type property with suitable geometric functions. In Section~\ref{sec:char} we follow a unified approach to prove rigidity properties for submanifolds of arbitrary codimension in model spaces with certain weights. We will consider three different situations: submanifolds confined into some regions of a model space, entire horizontal graphs in Euclidean space, and two-sided hypersurfaces satisfying a stability condition in a model space. 

In Section~\ref{subsec:ball} we study submanifolds inside or outside a metric ball $B_{t_0}$ centered at the pole of a model space. Previous related results for complete self-shrinkers immersed in some Euclidean balls were derived by Vieira and Zhou~\cite[Thm.~1]{vieira-zhou}, Pigola and Rimoldi~\cite[Thm.~1]{pigola-rimoldi} and Gimeno and the second author~\cite[Thm.~6.1]{gimeno-palmer-preprint} by assuming other hypotheses. It is easy to see that a metric sphere $S_t$ about the pole is a hypersurface of constant mean curvature with respect to any radial weight. In Theorems~\ref{th:ball1} and \ref{th:ball2} we provide sufficient conditions on an $n$-dimensional submanifold $P$ properly immersed inside or outside a metric ball $B_{t_0}$ of a weighted model space to conclude that $P$ is contained in a metric sphere $S_t$. Our conditions rely on suitable bounds for the mean curvature vector $\ovh^h$, and for the mean curvature of metric spheres in the corresponding $n$-dimensional weighted model space. These guarantee not only the $h$-parabolicity of $P$, which is a consequence of our parabolicity criteria, but also that a certain radial function $v$ on $P$ (which in $\rr^m$ coincides with the squared distance $r^2/2$), is $h$-subharmonic or $h$-superharmonic. From both facts we get that $v$ must be constant, which proves the statements. By using the same arguments we deduce in Corollary~\ref{cor:ball5} that, for certain radial weights having a singularity at the pole, the only compact $h$-minimal hypersurfaces avoiding the pole are the metric spheres $S_t$. In the particular case of the homogeneous weight $r^{1-m}$ in $\rr^m$ this improves a result for hypersurfaces of Ca\~nete and the third author~\cite[Thm.~6.4]{homostable}.

In Section~\ref{subsec:cylinder} we consider Euclidean space $\rr^m=\rr^k\times\rr^{m-k}$ together with perturbations of the Gaussian weight for which the cylindrical hypersurfaces $\mathcal{C}_t:=S_t\times\rr^{m-k}$ have constant weighted mean curvature. By following the approach in Section~\ref{subsec:ball} we show in Theorem~\ref{th:cylinder} that some submanifolds of bounded weighted mean curvature vector and properly immersed inside or outside a solid cylinder $B_{t_0}\times\rr^{m-k}$ must be contained in a cylinder $\mathcal{C}_t$. The geometric functions employed in this context are $d$ and $d^2$, where $d$ is the horizontal norm $d(x,y):=|x|$ in $\rr^k\times\rr^{m-k}$. In the particular example of the Gaussian weight our theorem provides a different proof of some results established by Cavalcante and Espinar~\cite{espinar-halfspace} for $\lambda$-hypersurfaces, see also Pigola and Rimoldi~\cite[Thm.~2]{pigola-rimoldi}, and Impera, Pigola and Rimoldi~\cite[Thm.~A]{ipr} for the case of self-shrinker hypersurfaces.

The well-known half-space theorem of Hoffman and Meeks \cite{hoffman-meeks} states that a minimal surface properly immersed in a closed half-space of $\rr^3$ must be a plane. In Section~\ref{subsec:half-space} we analyze the height function with respect to a Euclidean hyperplane in order to prove an analogous result in $\rr^m$ with suitable weights. In the case of a radial weight the linear hyperplanes are weighted minimal and so, it is natural to ask if any weighted minimal hypersurface properly immersed in a linear closed half-space must coincide with the boundary of such half-space. This question has a positive answer in $\rr^m$ with Gaussian weight, as was shown by Pigola and Rimoldi~\cite[Thm.~3]{pigola-rimoldi}, see also Cavalcante and Espinar~\cite[Thm.~1.1]{espinar-halfspace}. In Theorem~\ref{th:half-space1} we extend this fact to submanifolds which are minimal for radial weights $e^{f(r)}$ such that $f$ is a decreasing function and $f'(t)\to-\infty$ when $t\to\infty$. Moreover, for some perturbations of the Gaussian weight in $\rr^m$, we are able to deduce in Theorem~\ref{th:half-space2} a result that Cavalcante and Espinar~\cite[Thm.~1.4]{espinar-halfspace} proved for $\lambda$-hypersurfaces properly immersed in a closed half-space of $\rr^m$ with boundary of weighted mean curvature $\la$.

The half-space theorem has been also investigated in Riemannian cylinders $M\times\rr$ with some product weights, see Cavalcante, de Lima and Santos~\cite{cavalcante-santos}, and de Lima and Santos~\cite{delima-santos}. In Theorem~\ref{th:half-space3} we establish some results in this line for weighted parabolic submanifolds in Euclidean space $\rr^m=\rr^{m-1}\times\rr$ endowed with a product weight $e^h$ with $h(x,t):=\mu(t)$ for some function $\mu\in C^1(\rr)$. As a consequence, we get in Corollary~\ref{cor:3.2} several hyperbolicity criteria for $h$-minimal submanifolds within a closed half-space. These criteria apply in particular for the weight $e^t$, for which the associated minimal hypersurfaces are the translating solitons of the mean curvature flow.

The Bernstein problem in $\rr^m=\rr^{m-1}\times\rr$ seeks smooth entire graphs $t=\var(x)$ that solve the minimal surface equation. It is well-known that the unique solutions to this problem are Euclidean hyperplanes if and only if $m\leq 7$, see Giusti~\cite[Ch.~17]{giusti}. In $\rr^m$ with Gaussian weight an entire minimal graph must be a linear hyperplane. For self-shrinkers with polynomial growth this comes from the work of Ecker and Huisken~\cite[App.]{ecker-huisken}, whereas Espinar~\cite[Thm.~4.2]{espinar} studied the case of weighted parabolic self-shrinkers. The additional hypotheses were removed by Wang~\cite{wang}, who proved the general case. It is interesting to mention that, in contrast to the unweighted setting, the solution to the Bernstein problem for the Gaussian weight does not depend on the dimension. Another related result was given by Doan and Tran~\cite{doan-bernstein}, who showed that in $\rr^m=\rr^{m-1}\times\rr$ with mixed Gaussian-Euclidean weight, the only entire horizontal minimal graphs are the horizontal hyperplanes $\rr^{m-1}\times\{t\}$. This was later extended by de Lima, Oliveira and Santos~\cite[Cor.~1]{limaetal} for weighted parabolic entire graphs of constant weighted mean curvature. On the other hand, for translating solitons in $\rr^m$ that are entire graphs, a theorem of Bao and Shi \cite{baoshi} implies that they must be hyperplanes provided the Gauss map image lies in a compact set of an open spherical hemisphere. Coming back to the Gaussian weight, where all the affine hyperplanes have constant weighted mean curvature, it is also natural to ask if these hyperplanes are the unique smooth entire graphs of constant $h$-mean curvature. This question was positively answered by Cavalcante, de Lima and Santos~\cite[Cor.~4]{cavalcante-santos} by assuming an $L^1$ integrability hypothesis on the gradient of the graph. The general case was settled by Cheng and Wei~\cite[Thm.~1.3]{cheng-wei} as a consequence of their study of the Gauss map for properly immersed $\lambda$-hypersurfaces. Recently Doan~\cite{doan-bernstein2} has given another proof based on the isoperimetric property of hyperplanes in Gauss space. 

In Section~\ref{subsec:bernstein} we establish a new Bernstein-type theorem in $\rr^m=\rr^{m-1}\times\rr$ for some product weights. More precisely, in Theorem~\ref{th:bernstein} we provide sufficient conditions ensuring that a smooth entire horizontal graph of constant weighted mean curvature must be a hyperplane. For this, we compute the weighted Laplacian of the angle function $\theta$ between the vertical direction in $\rr^m$ and the unit normal to the graph. Since our conditions guarantee that the graph is $h$-parabolic and $\theta$ is $h$-subharmonic, we can conclude that $\theta$ is constant and the proof easily follows. Theorem~\ref{th:bernstein} applies for some perturbations of the Gaussian weight, thus extending previously mentioned results by means of a different technique.  

We finish this work with a direct application of our parabolicity criteria to the classification of stable hypersurfaces in a weighted sense. A two-sided hypersurface $P$ in a model manifold with weight $e^h$ is \emph{strongly $h$-stable} if it has constant weighted mean curvature $H^h_P$, and it is a second order minimum of the functional $A_h-H^h_P\,V_h$ under compactly supported variations (here $A_h$ and $V_h$ stand for the weighted area and volume functionals). The study of these hypersurfaces has been focus of attention in the last years, with special emphasis in the minimal case, see for instance Fan~\cite{fan}, Ho~\cite{ho}, Colding and Minicozzi~\cite{colding-minicozzi,cm2}, Liu~\cite{liu}, Cheng, Mejia and Zhou~\cite{cmz}, Impera and Rimoldi~\cite{impera}, and Espinar~\cite{espinar}. The $h$-parabolicity condition for a two-sided hypersurface $P$ entails the existence of a sequence of smooth functions with compact support on $P$ approximating the constant function $1$, see Theorem~\ref{th:parchar}. By using these functions together with the stability property, it is straightforward to deduce that an $h$-parabolic and strongly $h$-stable hypersurface must be totally geodesic if the ambient manifold has non-negative Bakry-\'Emery-Ricci curvature. This rigidity principle was previously obtained by Espinar~\cite[Thm.~3.1]{espinar}, who employed gradient Schr\"odinger operators. In Section~\ref{subsec:stable} we make use of this principle and the parabolicity criteria in Section~\ref{sec:criteria} to provide characterization and non-existence results for strongly $h$-stable hypersurfaces, see Theorem~\ref{cor:stable1} and Corollary~\ref{cor:eustable}.

The paper is organized into four sections. The second one mainly contains background material about weighted manifolds, where we gather some facts about potential theory and weighted extrinsic geometry of submanifolds. We also recover the previously mentioned Ahlfors-type description of the parabolicity of a weighted model space. In Section~\ref{sec:criteria} we prove our criteria for the $h$-parabolicity or $h$-hyperbolicity of submanifolds in model spaces with weights. Finally,  Section~\ref{sec:char} is devoted to our rigidity properties and characterization results for submanifolds properly immersed in model spaces and having bounded mean curvature vector with respect to suitable weights.

\section{Preliminaires}
\label{sec:prelimi}

In this section we introduce the notation and gather some basic results that will be used throughout this work.

\subsection{Some potential theory in weighted manifolds}
\label{subsec:potential}
\noindent

Let $M^m$ be a smooth, connected, $m$-dimensional Riemannian manifold with empty boundary. For a function $u\in C^1(M)$ we denote by $\nabla u$ its Riemannian gradient. If $u\in C^2(M)$ then the Hessian at a point $p\in M$ is the bilinear map $(\text{Hess}\,u)_p(X,Y):=\escpr{D_X\nabla u,Y}$, where $\escpr{\cdot\,,\cdot}$ stands for the Riemannian metric, $D$ is the Levi-Civita connection, and $X,Y$ are vectors in the tangent space $T_pM$. The Laplacian of $u$ is the function $\Delta u:=\divv(\nabla u)=\text{tr}(\text{Hess}\,u)$, where $\divv$ is the Riemannian divergence of $C^1$ vector fields on $M$.

A \emph{weighted manifold} is a Riemannian manifold $M^m$ together with a $C^1$ function $e^h$, which is used to weight the Hausdorff measures associated to the Riemannian metric. In particular, for any Borel set $E\subeq M$, and any $C^1$ hypersurface $P\sub M$, the \emph{weighted volume} of $E$ and the \emph{weighted area} of $P$ are given by
\begin{equation}
\label{eq:volarea}
V_h(E):=\int_E dv_h=\int_E e^h\,dv,\quad A_h(P):=\int_P da_h=\int_P e^h\,da,
\end{equation}
where $dv$ and $da$ denote the Riemannian elements of volume and area, respectively.

In weighted manifolds there are generalizations not only of volume and area, but also of some differential operators of Riemannian manifolds. Following Grigor'yan \cite[Sect.~2.1]{grigoryan2}, we define the \emph{weighted Laplacian} or \emph{$h$-Laplacian} of a function $u\in C^2(M)$ as
\begin{equation}
\label{eq:flaplacian}
\Delta^hu:=\Delta u+\escpr{\nabla h,\nabla u}.
\end{equation}
This is a second order linear operator, which is self-adjoint with respect to $dv_h$ since
\[
\int_Mu\,\Delta^hw\,dv_h=\int_Mw\,\Delta^hu\,dv_h,
\]
for any two functions $u,w\in C^2_0(M)$.

Given a domain (connected open set) $\Om$ in $M$, a function $u\in C^2(\Om)$ is \emph{$h$-harmonic} (resp. \emph{$h$-subharmonic}) if $\Delta^h u=0$ (resp. $\Delta^h u\geq 0$) on $\Om$. As in the unweighted setting there is a strong maximum principle and a Hopf boundary point lemma for $h$-subharmonic functions. We gather both results in the next statement.

\begin{theorem}
\label{th:mp}
Let $\Om$ be a smooth domain of a Riemannian manifold $M$ with a weight $e^h$. Consider an $h$-subharmonic function $u\in C^2(\Om)\cap C(\overline{\Om})$. Then, we have:
\begin{itemize}
\item[(i)] if $u$ achieves its maximum in $\Om$ then $u$ is constant,
\item[(ii)] if there is $p_0\in\ptl\Om$ such that $u(p)<u(p_0)$ for any $p\in\Om$ then $\frac{\ptl u}{\ptl\nu}(p_0)>0$, where $\nu$ denotes the outer unit normal along $\ptl\Om$.
\end{itemize}
\end{theorem}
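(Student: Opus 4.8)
The plan is to recognize $\Delta^h$ as a linear, second order, uniformly elliptic operator \emph{without zeroth order term}, and then to invoke the classical strong maximum principle and boundary point lemma of E.~Hopf (as found, for instance, in the book of Gilbarg and Trudinger). Both conclusions are local in nature, so it suffices to write the operator in an arbitrary coordinate chart and check that it has the required structure; the global statement (i) is then recovered from the connectedness of $\Om$.

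First I would fix local coordinates $(x^1,\dots,x^m)$ on an open set $U\subeq\Om$. Writing the Laplace--Beltrami operator in divergence form and expanding, together with $\escpr{\nabla h,\nabla u}=g^{ij}\,\ptl_ih\,\ptl_ju$, gives
\[
\Delta^hu=g^{ij}\,\ptl_{ij}u+b^i\,\ptl_iu,
\]
where $(g^{ij})$ is the inverse metric and $b^i$ collects the first order terms coming from $\Delta$ and from the weight. Since $(g^{ij})$ is symmetric and positive definite it is uniformly elliptic on any compact subset of $U$, the coefficients $g^{ij},b^i$ are continuous hence locally bounded, and crucially there is no term proportional to $u$. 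Thus on each chart $\Delta^h$ fits exactly the hypotheses of the Hopf theory for operators $Lu\geq 0$ with vanishing zeroth order coefficient.

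For part (i), suppose $u$ attains its maximum value $M$ at some interior point, and set $S:=\{p\in\Om:u(p)=M\}$. This set is non-empty and closed in $\Om$ by continuity. I would show it is also open: if not, there is a point of $S$ arbitrarily close to $\Om\setminus S$, and applying the interior ball version of the boundary point lemma at such a point yields a strictly positive inward derivative of $u$, contradicting that $u$ has an interior maximum there. Openness and closedness together with the connectedness of $\Om$ force $S=\Om$, so $u$ is constant; equivalently one may quote directly the strong maximum principle in its global form. For part (ii), since $\Om$ is smooth it satisfies an interior sphere condition at $p_0\in\ptl\Om$, which in a coordinate chart produces a genuine interior ball tangent to $\ptl\Om$ at $p_0$. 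The Hopf boundary point lemma applied to $\Delta^h$ on this ball then gives a strictly positive outer normal derivative; as the Riemannian outer unit normal $\nu$ is a positive multiple of the coordinate conormal, the sign of $\frac{\ptl u}{\ptl\nu}(p_0)$ is unchanged, whence $\frac{\ptl u}{\ptl\nu}(p_0)>0$.

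The routine verifications (the coordinate expansion, the ellipticity bounds, the continuity of the coefficients) present no difficulty. The one point requiring care is the passage from the Euclidean boundary point lemma to the manifold: I must ensure that the interior sphere condition for the smooth domain $\Om$, read in a coordinate chart where the metric is merely comparable to the Euclidean one, still yields an honest interior ball on which Hopf's lemma applies, and that the resulting inequality transfers back to the Riemannian normal derivative with the correct sign. This is the main, though standard, obstacle.
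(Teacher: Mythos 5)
Your proposal is correct and follows essentially the same route as the paper: both reduce the statement to the classical Hopf strong maximum principle and boundary point lemma for second order elliptic operators with no zeroth order term (the paper by citing Grigor'yan for (i) and the radial-barrier argument of Gilbarg--Trudinger for (ii), you by writing $\Delta^h u=g^{ij}\ptl_{ij}u+b^i\ptl_iu$ in a chart and invoking the Euclidean results directly). Your handling of the two transfer issues --- the interior ball in coordinates and the sign of $\frac{\ptl u}{\ptl\nu}$ via the directional form of Hopf's lemma, since $\nu$ is outward-pointing in the chart --- is sound, so no gap remains.
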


\begin{proof}
The proof of (i) can be found in \cite[Cor.~8.15]{gri-book}. The proof of (ii) can be derived from (i) as in the unweighted Euclidean case \cite[Lem.~3.4]{gilbarg-trudinger} by using a radial barrier comparison function.
\end{proof}

From the maximum principle it is clear that any $h$-subharmonic function on a compact manifold $M$ must be constant. In general, a weighted manifold $M$ is \emph{weighted parabolic} or \emph{$h$-parabolic} if any $h$-subharmonic function which is bounded from above must be constant. Otherwise we say that $M$ is \emph{weighted hyperbolic} or \emph{$h$-hyperbolic}. 

Next, we will recall how the $h$-parabolicity of manifolds can be characterized by means of weighted capacities. For more details about the definitions and the results below we refer the reader to \cite[Sect.~4.3]{grigoryan}, \cite[Sect.~2.3]{grigoryan-escape} and \cite[Sect.~2]{gri-masa}. 

A \emph{capacitor} is a pair $(K,\Om)$ where $\Om\subeq M$ is an open set and $K\sub\Om$ is compact. The \emph{$h$-capacity} of $(K,\Om)$ is the non-negative number given by
\begin{equation}
\label{eq:capacity}
\text{Cap}^h(K,\Om):=\inf\left\{\int_\Om|\nabla\phi|^2\,dv_h\,;\,\phi\in C^\infty_0(\overline{\Om}) \text{ with }0\leq\phi\leq 1\text{ and }\phi=1\text{ on }K\right\},
\end{equation}
where $|X|$ is the norm of a vector field $X$ on $M$.  For a precompact open set $D$ with $\overline{D}\sub\Om$ we denote $\text{Cap}^h(D,\Om):=\text{Cap}^h(\overline{D},\Om)$. We simply write $\text{Cap}^h(K):=\text{Cap}^h(K,M)$ and $\text{Cap}^h(D):=\text{Cap}^h(D,M)$. Clearly $\text{Cap}^h(K,\Om)$ is non-decreasing with respect to $K$ and non-increasing with respect to $\Om$. 
Indeed, we have
\begin{equation}
\label{eq:caplim}
\text{Cap}^h(K)=\lim_{k\to\infty}\text{Cap}^h(K,\Om_k),
\end{equation}
where $\{\Om_k\}_{k\in\nn}$ is any exhaustion of $M$ by precompact open sets with smooth boundaries.

Given a capacitor $(K,\Om)$ where $\Om$ is a smooth precompact open set and $K$ has smooth boundary, it is known that the infimum in $\text{Cap}^h(K,\Om)$ is attained by the \emph{$h$-capacity potential} of $(K,\Om)$, which is the unique solution to the following Dirichlet problem for the weighted Laplace equation
\begin{equation}
\label{eq:laplace}
\begin{cases}
\Delta^h u=0\,\,\,&\text{in\, $\Om \setminus K$},\\
\phantom{\Delta^h }u=1\,\,\,&\text{in\, $\partial K$}, \\
\phantom{\Delta^h }u=0\,\,\,&\text{in\, $\partial \Om$}.
\end{cases}
\end{equation}
As a consequence, we get
\begin{equation}
\label{eq:capint}
\text{Cap}^h(K,\Om)=\int_{\Om\setminus K}|\nabla u|^2\,dv_h=\int_{\ptl K}\frac{\ptl u}{\ptl\nu}\,da_h,
\end{equation}
where $\nu$ is the outer unit normal along $\ptl(\Om\setminus K)$, i.e., the unit normal along $\ptl K$ pointing into $K$. The second equality in \eqref{eq:capint} comes from \eqref{eq:laplace} by using the integration by parts formula (see \cite[Sect.~2]{homostable})
\[
\int_U|\nabla w|^2\,dv_h=-\int_U w\,\Delta^h w\,dv_h+\int_{\ptl U} w\,\frac{\ptl w}{\ptl\nu}\,da_h,
\]
where $U$ is a smooth open set, $\nu$ is the outer unit normal along $\ptl U$ and $w\in C^2_0(\overline{U})$. 

Now, we can state the aforementioned characterization of the $h$-parabolicity by using weighted capacities and suitable approximations of the constant function $1$ on $M$.

\begin{theorem}
\label{th:parchar}
For a Riemannian manifold $M$ with a weight $e^h$, these conditions are equivalent:
\begin{itemize}
\item[(i)] $M$ is $h$-parabolic,
\item[(ii)] $\emph{Cap}^h(K)=0$ for some compact set $K\sub M$ with non-empty interior,
\item[(iii)] $\emph{Cap}^h(K)=0$ for any compact set $K\sub M$,
\item[(iv)] there is a sequence $\{\var_k\}_{k\in\nn}\sub C^\infty_0(M)$ with: 
\begin{enumerate}
\item[(a)] $0\leq\var_k\leq 1$ for any $k\in\nn$, 
\item[(b)] for any compact set $K\sub M$, there is $k_0\in\nn$ such that $\var_k=1$ on $K$ for any $k\geq k_0$, 
\item[(c)] $\lim_{k\to\infty}\int_M|\nabla\var_k|^2\,dv_h=0$.
\end{enumerate}
\end{itemize}
\end{theorem}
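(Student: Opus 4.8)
The plan is to prove the chain of implications (i) $\Rightarrow$ (iii) $\Rightarrow$ (ii) $\Rightarrow$ (iv) $\Rightarrow$ (i), the step (iii) $\Rightarrow$ (ii) being immediate. The central object is the \emph{limiting $h$-capacity potential} of a fixed compact set $K$ with smooth boundary and non-empty interior: given an exhaustion $\{\Om_k\}$, let $u_k$ solve the Dirichlet problem \eqref{eq:laplace} for $(K,\Om_k)$, extended by $u_k=1$ on $K$ and $u_k=0$ off $\Om_k$. By the maximum principle in Theorem~\ref{th:mp} we have $0\leq u_k\leq 1$, and comparing $u_k$ with $u_{k+1}$ on $\Om_k\setminus K$ (both equal $1$ on $\ptl K$, while $u_{k+1}\geq 0=u_k$ on $\ptl\Om_k$) shows that $\{u_k\}$ is non-decreasing. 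Hence $u_k\uparrow u$ with $0\leq u\leq 1$ and $u=1$ on $K$; interior elliptic estimates make the convergence locally smooth, so $u$ is $h$-harmonic on $M\setminus K$. Combining \eqref{eq:caplim}, \eqref{eq:capint} with an energy-convergence argument, I would record the identity $\text{Cap}^h(K)=\int_{M\setminus K}|\nabla u|^2\,dv_h$, which drives the two nontrivial directions.

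For (i) $\Rightarrow$ (iii), assume $M$ is $h$-parabolic and let $K$ be as above. The limit $u$ is bounded, $h$-harmonic on $M\setminus K$, and attains its maximal value $1$ on $K$; a Hopf-type computation of the normal derivative jump across $\ptl K$ (Theorem~\ref{th:mp}(ii)) shows that its extension is $h$-subharmonic on all of $M$. Parabolicity then forces $u\equiv 1$, whence $\text{Cap}^h(K)=\int|\nabla u|^2\,dv_h=0$. Any compact set sits inside some smooth $K'$ with non-empty interior, and monotonicity of the capacity gives $\text{Cap}^h(K)\leq\text{Cap}^h(K')=0$. The implication (iii) $\Rightarrow$ (ii) is trivial.

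The crux, and the step I expect to be the main obstacle, is (ii) $\Rightarrow$ (iv): manufacturing a global cutoff sequence from the vanishing of a single capacity. If $\text{Cap}^h(K_0)=0$ for some $K_0$ with non-empty interior, then the potentials $u_k$ of $(K_0,\Om_k)$ satisfy $\int_M|\nabla u_k|^2\,dv_h=\text{Cap}^h(K_0,\Om_k)\to 0$, while $u_k\uparrow 1$ locally uniformly (the limit being constant, by Dini). The difficulty is that $u_k$ equals $1$ only on $K_0$, whereas (iv)(b) requires functions equal to $1$ on arbitrarily large compacta. I would fix this by truncating: for levels $\delta_k\downarrow 0$ set $\var_k:=F_k\circ u_k$, with $F_k\colon[0,1]\to[0,1]$ equal to $1$ on $[1-\delta_k,1]$ and affine on $[1-2\delta_k,1-\delta_k]$, so that $|\nabla\var_k|\leq\delta_k^{-1}|\nabla u_k|$ and hence $\int_M|\nabla\var_k|^2\,dv_h\leq\delta_k^{-2}\int_M|\nabla u_k|^2\,dv_h$. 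The balancing act is the obstacle: $\delta_k$ must decay slowly enough that $\delta_k^{-2}\int|\nabla u_k|^2\to 0$, yet fast enough that (using $u_k\to 1$ uniformly on compacta) $\var_k=1$ on an exhausting family $K_k$; a diagonal extraction reconciles both demands and delivers (a), (b), (c).

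Finally, for (iv) $\Rightarrow$ (i), let $v\in C^2(M)$ be $h$-subharmonic and bounded above, normalized so that $\sup_M v=0$. Then $g:=e^v$ satisfies $0<g\leq 1$ and $\Delta^h g=e^v\big(\Delta^h v+|\nabla v|^2\big)\geq e^v|\nabla v|^2$. Testing against $\var_k^2$ and integrating by parts (legitimate since $\var_k$ has compact support and $\Delta^h$ is self-adjoint for $dv_h$) yields
\begin{equation*}
\int_M\var_k^2\,e^v\,|\nabla v|^2\,dv_h\leq\int_M\var_k^2\,\Delta^h g\,dv_h=-2\int_M\var_k\,e^v\,\escpr{\nabla\var_k,\nabla v}\,dv_h,
\end{equation*}
and Cauchy--Schwarz together with $e^v\leq 1$ bounds the right-hand side by $2\big(\int_M\var_k^2\,e^v|\nabla v|^2\,dv_h\big)^{1/2}\big(\int_M|\nabla\var_k|^2\,dv_h\big)^{1/2}$. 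Since the last factor tends to $0$ by (iv)(c), the quantity $\int_M\var_k^2\,e^v|\nabla v|^2\,dv_h$ tends to $0$; but by (iv)(b) it dominates $\int_K e^v|\nabla v|^2\,dv_h$ for every fixed compact $K$ once $k$ is large. Hence $\nabla v\equiv 0$, so $v$ is constant and $M$ is $h$-parabolic.
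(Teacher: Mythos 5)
Your argument is correct in outline but takes a genuinely different route from the paper, which does not prove the theorem from scratch: it cites Grigor'yan--Saloff-Coste \cite[Sect.~1.7]{grigoryan3} for the equivalence of (i), (ii) and (iii), and obtains (iii)$\Leftrightarrow$(iv) directly from the definition \eqref{eq:capacity} applied to an exhaustion $\{\Om_k\}$ (if $\text{Cap}^h(\overline{\Om}_k)=0$, pick admissible test functions of energy $<1/k$ and diagonalize; conversely each $\var_k$ with $\var_k=1$ on $K$ is admissible for $\text{Cap}^h(K,M)$). Your self-contained cycle is more informative: the (iv)$\Rightarrow$(i) step via $g=e^v$ and the Caccioppoli/Cauchy--Schwarz absorption is the classical argument and is correct, and the truncation of the capacity potentials in (ii)$\Rightarrow$(iv) works. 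Two small remarks there: a \emph{fixed} truncation level $\delta_k\equiv 1/2$ already does the job (then $\int_M|\nabla\var_k|^2\,dv_h\leq 4\int_M|\nabla u_k|^2\,dv_h\to 0$ and $\{u_k\geq 1/2\}$ eventually contains any fixed compact set), so the diagonal balancing is unnecessary; and $F_k\circ u_k$ is only Lipschitz, so to land in $C^\infty_0(M)$ as (iv) requires you should take $F_k$ smooth and observe that $F_k\circ u_k$ is locally constant near $\ptl K_0$ and $\ptl\Om_k$, the only places where $u_k$ fails to be smooth.

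The one genuine error is in (i)$\Rightarrow$(iii): the extension of $u$ by $1$ on $K$ is $h$-\emph{superharmonic}, not $h$-subharmonic. Since $u\leq 1$ off $K$ and $u=1$ on $\ptl K$, the outward normal derivative of $u$ along $\ptl K$ is $\leq 0$, so the distributional $\Delta^h$ of the glued function acquires a \emph{non-positive} singular measure on $\ptl K$. Indeed, if the extension were subharmonic and attained its interior maximum on $K$, the strong maximum principle alone would force it to be constant and parabolicity would never enter --- a sign that the claim cannot be right as stated. The repair is immediate: apply the Liouville property to $-u$ (equivalently $1-u$), which is $h$-subharmonic and bounded above; parabolicity gives $u\equiv 1$, and then $\text{Cap}^h(K)=\lim_k\int_{\ptl K}\tfrac{\ptl u_k}{\ptl\nu}\,da_h=\int_{\ptl K}\tfrac{\ptl u}{\ptl\nu}\,da_h=0$ by \eqref{eq:capint} and $C^1$ convergence up to $\ptl K$. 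You should also acknowledge that the glued function is merely Lipschitz across $\ptl K$, while the paper's definition of $h$-subharmonicity (and hence of parabolicity) is stated for $C^2$ functions; passing to the weak/distributional formulation of subharmonicity, or quoting the equivalence of the two versions of parabolicity, is a standard but non-vacuous step that your ``Hopf-type computation'' currently papers over.
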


\begin{proof}
The equivalence between (i), (ii) and (iii) is found in \cite[Sect.~1.7]{grigoryan3}. That (iii) is equivalent to (iv) comes from \eqref{eq:capacity} by taking an exhaustion $\{\Om_k\}_{k\in\nn}$ of $M$ by precompact open subsets. 
\end{proof}

The previous theorem is used to show parabolicity of non-compact manifolds in some situations. Suppose that $M$ is complete and denote by $B_t(p)$ the open metric ball of radius $t>0$ centered at $p\in M$. As in the Riemannian case, the $h$-parabolicity of $M$ is related to the integrability properties of the function $v_h(t):=V_h(B_t(p))$, see \cite[Sect.~9.1]{grigoryan2} and \cite[p.~608]{gri-masa}. For instance, if $\int_{t_0}^\infty t\,v_h(t)^{-1}dt=\infty$ for some $t_0>0$, then $M$ is $h$-parabolic. This clearly holds when $M$ has finite weighted volume. An important example is the Gaussian weight $e^{-|p|^2/2}$ in $\rr^m$.

\subsection{Submanifolds in weighted manifolds}
\label{subsec:submani}
\noindent

Given a Riemannian manifold $M^m$ we denote by $P^n$ an $n$-dimensional  ($n<m$) smooth submanifold with $\ptl P=\emptyset$ immersed in $M$. We consider in $P$ the induced Riemannian metric. For a vector field $X$ on $M$ we write $X^\top$ and $X^\bot$ for the tangent and normal projections with respect to $P$. 

We use the notation $\nabla_P u$ for the gradient in $P$ of a function $u\in C^1(P)$. When $u$ is defined on an open set of $M$ then $\nabla_Pu=(\nabla u)^\top$. The divergence relative to $P$ of a $C^1$ vector field $X$ on $P$ is the function $(\divv_PX)(p):=\sum_{i=1}^n\,\escpr{D_{e_i}X,e_i}$, where $\{e_1,\ldots,e_n\}$ is any orthonormal basis in the tangent space $T_pP$. The Laplace operator relative to $P$ is the Laplace operator in $P$ with respect to the induced Riemannian metric. It is given by
$\Delta_Pu:=\divv_P(\nabla_Pu)$, for any $u\in C^2(P)$.

If we have a weight $e^h$ in $M$ then its restriction to $P$ produces a structure of weighted manifold. From \eqref{eq:flaplacian} the associated  \emph{$h$-Laplacian} $\Delta^h_P$ has the expression
\begin{equation}
\label{weighted-Laplacian}
\Delta^h_Pu=\Delta_Pu+\escpr{\nabla_P h,\nabla_Pu},
\end{equation}
for any $u\in C^2(P)$. We say that the submanifold $P$ is \emph{$h$-parabolic} when $P$ is weighted parabolic as a weighted manifold. Otherwise we say that $P$ is \emph{$h$-hyperbolic}. By Theorem~\ref{th:parchar} the $h$-parabolicity of $P$ is equivalent to that $\text{Cap}^h_P(K)=0$ for some compact set $K\subeq P$ with non-empty interior in $P$, where $\text{Cap}^h_P$ denotes the \emph{$h$-capacity relative to $P$}.  Clearly a compact submanifold is always $h$-parabolic. In Section~\ref{sec:criteria} we will provide parabolicity criteria for non-compact submanifolds under conditions on their weighted extrinsic geometry. In precise terms, we will assume restrictions on the weighted mean curvature that we now introduce.

Suppose first that $P$ is a two-sided hypersurface, i.e., $P$ admits a smooth unit normal vector field $N$. Following Gromov~\cite[Sect.~9.4.E]{gromov-GAFA} and Bayle~\cite[Sect.~3.4.2]{bayle-thesis}, the \emph{weighted mean curvature} or \emph{$h$-mean curvature} of $P$ is the function
\begin{equation}
\label{eq:fmc}
H^h_P:=(m-1)\,H_P-\escpr{\nabla h,N},
\end{equation}
where $H_P$ is the mean curvature of $P$ in $M$ defined by equality $(m-1)\,H_P:=-\divv_P N$. This notion is related to the isoperimetric problem in $M$ with weight $e^h$. Indeed, the first variational formulas for the weighted volume and area in \eqref{eq:volarea} imply that $H^h_P$ is constant on $P$ if and only if $P$ is a critical point of the weighted area under variations preserving the weighted volume, see \cite[Prop.~3.2]{rcbm} and \cite[Cor.~3.3]{castro-rosales}. 

Next, we consider an arbitrary submanifold $P^n$. Recall that the mean curvature vector of $P$ is the normal vector field $\ovh$ such that $n\ovh:=-\sum_{i=1}^{m-n}\,(\divv_P N_i)\,N_i$, where $\{N_1,\ldots,N_{m-n}\}$ is any local orthonormal basis of vector fields normal to $P$. Thus, it is clear that
\begin{equation}
\label{eq:divnorm}
\divv_P X=-\escpr{n\ovh,X}, \quad\text{for any $C^1$ vector field } X \text{ normal to } P. 
\end{equation}
We define the \emph{$h$-mean curvature vector} of $P$ as the normal vector field to $P$ given by
\begin{equation}
\label{eq:mcvector}
\ovh^h:=n\ovh-(\nabla h)^\bot.
\end{equation}
For a two-sided hypersurface $P$ it is clear from \eqref{eq:fmc} that $\ovh^h=H^h_P\,N$. We say that a submanifold $P$ is \emph{$h$-minimal} if $\ovh^h=0$ on $P$. More generally, a submanifold $P$ has \emph{bounded $h$-mean curvature} if $|\ovh^h|\leq c$ on $P$ for some $c\geq 0$. 

\begin{example}
\label{ex:singularity}
As we pointed out in the Introduction, some weighted minimal submanifolds play an important role in the singularity theory of the mean curvature flow. For instance, in $\rr^m$ with weight $e^{\kappa |p|^2/2}$, $\kappa\in\{-1,1\}$, a weighted minimal submanifold $P^n$ satisfies $n\overline{H}_P(p)=\kappa\,p^\bot$ for any $p\in P$. As it is was shown by Colding and Minicozzi~\cite[Lem.~2.2]{colding-minicozzi}, \cite[Sect.~1.1]{cm2} this is the equation of the self-similar solutions to the mean curvature flow (\emph{self-shrinkers} for $\kappa=-1$ and \emph{self-expanders} for $\kappa=1$). On the other hand, the minimal submanifolds in $\rr^m=\rr^{m-1}\times\rr$ for the weight $e^h$ with $h(x,t):=t$ are those for which $n\overline{H}_P=\ptl_t^\bot$, where $\ptl_t$ is the unit vertical vector field. These submanifolds are called \emph{translating solitons} since their evolution under the mean curvature flow consist of vertical translations. Coming back to the Gaussian weight $e^{-|p|^2/2}$ it is usual to call \emph{$\la$-hypersurfaces} to those hypersurfaces of constant weighted mean curvature $\la$.
\end{example}

\subsection{Model spaces}
\label{subsec:models}
\noindent

Here we introduce the ambient manifolds where the main results of the paper will be established. We recall some facts about their geometry and we characterize their weighted parabolicity with respect to a radial weight. The reader is referred to \cite[Sect.~3]{grigoryan}, \cite[p.~29]{greene-wu} and \cite[Ch.~7]{oneill} for further details.

Consider a complete Riemannian manifold $M^m$ with a pole $o\in M$. This means that the exponential map $\exp_o:T_oM\to M$ is a diffeomorphism (so that $M$ is homeomorphic to $\rr^m$). Hence we have geodesic polar coordinates $(t,\theta)$ in $M\setminus\{o\}$ defined for $t\in\rr^+$ and $\theta\in\sph^{m-1}$. We say that $M$ is a \emph{model space} if the Riemannian metric in $M\setminus\{o\}$ is rotationally symmetric, i.e., its expression with respect to $(t,\theta)$ is $dt^2+w(t)^2\,d\theta^2$, where $w$ is a smooth positive function in $\rr^+$ and $d\theta$ is the standard metric in the unit sphere $\sph^{m-1}$. This implies in particular that $M\setminus\{o\}$ is isometric to the warped product $\rr^+\times_w\,\sph^{m-1}$. The \emph{warping function} $w$ extends to $0$ in such a way that $w(0)=0$, $w'(0)=1$ and $w^{(k)}(0)=0$ for any even derivation order $k$. This function is uniquely determined by $M$; indeed, the Riemannian area of the metric sphere of radius $t>0$ centered at $o$ equals $c_m\,w(t)^{m-1}$, where $c_{m}$ is the Euclidean area of $\sph^{m-1}$. We will denote by $M^m_w$ the $m$-dimensional model space of warping function $w$. Sometimes we will omit the dimension and simply write $M_w$.

\begin{examples}
Many important Riemannian manifolds are model spaces. This is the case of the simply connected Riemannian space forms of non-positive sectional curvature. For the Euclidean space $\rr^m$ the warping function is $w(t)=t$. For the hyperbolic space $\hh^m(\kappa)$ of constant sectional curvature $\kappa<0$ we have $w(t)=\frac{1}{\sqrt{-\kappa}}\,\sinh(\sqrt{-\kappa}\,t)$. Other interesting model spaces are the hypersurfaces of revolution obtained by rotating a curve around a line which meets the curve at only one point. There are also model spaces, like the hyperbolic paraboloid in $\rr^3$ which are neither spaces forms nor hypersurfaces of revolution. 
\end{examples}

In a model space $M^m_w$ we denote by $r:M_w\to\rr$ the \emph{distance function} with respect to $o$. Since $o$ is a pole then $r\in C^\infty(M_w\setminus\{o\})$ and $|\nabla r|=1$. Let $B_t:=r^{-1}([0,t))$ and $S_t:=r^{-1}(t)$ be the open metric ball and the metric sphere of radius $t>0$ centered at $o$. Note that $S_t$ is always a smooth compact connected hypersurface, and that $-\nabla r$ is the unit normal along $S_t$ pointing into $B_t$. 

Most of the geometry of $M_w$ can be described in terms of the functions $w$ and $r$. For example, given any point $p\in M_w\setminus\{o\}$, the sectional curvatures at $p$ with respect to planes containing $(\nabla r)_p$ equal $-w''/w$ evaluated at $r(p)$. On the other hand, the mean curvature $H(t)$ of the sphere $S_t$ with respect to $-\nabla r$ satisfies
\begin{equation}
\label{eq:mcspheres}
H(t)=\frac{w'(t)}{w(t)},
\end{equation}
so that $S_t$ is a constant mean curvature hypersurface. The function $H(t)$ is related to the Hessian of $r$. More precisely, we have this equality for any $p\in M_w\setminus\{o\}$, see \cite[Prop.~2.20]{greene-wu}
\begin{equation}
\label{eq:hessr}
(\text{Hess}\,r)_p(X,Y)=H(r(p))\,\left\{\escpr{X,Y}-\escpr{(\nabla r)_p,X}\,\escpr{(\nabla r)_p,Y}\right\}, \quad X,Y\in T_pM_w.
\end{equation}
By tracing the previous identity we get
\begin{equation}
\label{eq:deltar}
(\Delta r) (p)=(m-1)\,H(r(p)), \quad p\in M_w\setminus\{o\}.
\end{equation}

Now we consider a weight $e^h$ in $M^m_w$. Since the Riemannian metric in $M_w$ is rotationally symmetric it is natural to assume that $e^h$ is a \emph{radial weight}, which means that $h=f\circ r=f(r)$ for some function $f\in C^1(\rr^+_0)$ with $f'(0)=0$. Sometimes a model space with a radial weight is called a \emph{weighted model} \cite[Sect.~2.4]{grigoryan2}. According to \eqref{eq:volarea} and the coarea formula, the weighted area of $S_t$ and the weighted volume of $B_t$ are respectively given by
\begin{equation}
\label{eq:ahsphere}
\begin{split}
A_h(S_t)&=c_m\,w(t)^{m-1}\,e^{f(t)},
\\
V_h(B_t)&=c_m\,\int_0^t w(s)^{m-1}\,e^{f(s)}\,ds.
\end{split}
\end{equation}
From \eqref{eq:fmc} and \eqref{eq:mcspheres}, the $h$-mean curvature of $S_t$ with respect to the unit normal $-\nabla r$ is
\begin{equation}
\label{eq:mchspheres}
H^h(t)=(m-1)\,H(t)+f'(t)=(m-1)\,\frac{w'(t)}{w(t)}+f'(t).
\end{equation}

In the next result we gather some identities that will be helpful in Section~\ref{sec:criteria}. By following \cite[Ex.~4.2]{grigoryan} and \cite[Prop.~6.3]{mp-transience} we compute the weighted capacity of a ball $B_t$ in a weighted model space, by solving the weighted Dirichlet problem \eqref{eq:laplace} for concentric balls centered at the pole.

\begin{proposition}
\label{prop:epm}
In the model space $M^m_w$ we consider a radial weight $e^h$ with $h=f(r)$. Then, for any $\rho, R>0$ such that $\rho<R$, the $h$-capacity potential of the capacitor $(\overline{B}_\rho,B_R)$ is the radial function $u:=\var(r)$, where
\[
\var(s):=\left(\int_\rho^R\frac{dt}{A_h(S_t)}\right)^{-1}\,\left(\int_s^R\frac{dt}{A_h(S_t)} \right), \quad \rho\leq s\leq R.
\]
As a consequence, we have
\[
\emph{Cap}^h(B_\rho,B_R)=|\var'(\rho)|\,A_h(S_\rho)=\left(\int_\rho^R\frac{dt}{A_h(S_t)}\right)^{-1},
\]
and so
\[
\emph{Cap}^h(B_\rho)=\left(\int_\rho^{\infty}\frac{dt}{A_h(S_t)}\right)^{-1}.
\]
\end{proposition}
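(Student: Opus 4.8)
The plan is to verify directly that the radial function $u:=\varphi(r)$ solves the Dirichlet problem \eqref{eq:laplace} for the capacitor $(\overline{B}_\rho,B_R)$, and then to invoke the uniqueness of the $h$-capacity potential. First I would note that $(\overline{B}_\rho,B_R)$ is an admissible capacitor: since $0<\rho<R$, the set $B_R$ is a smooth precompact open set and $\overline{B}_\rho$ has smooth boundary $S_\rho$, so the characterization of the $h$-capacity via \eqref{eq:laplace} and \eqref{eq:capint} applies. Because $r\in C^\infty(M_w\setminus\{o\})$ and $\rho>0$, the candidate $u$ is smooth on the closed annulus $\overline{B}_R\setminus B_\rho$, and the boundary values $u=1$ on $S_\rho$ and $u=0$ on $S_R$ are immediate from the defining formula for $\varphi$.

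The key computation is the $h$-Laplacian of a radial function. Since $|\nabla r|=1$, for $u=\varphi(r)$ we have $\nabla u=\varphi'(r)\,\nabla r$, whence $\Delta u=\varphi''(r)+\varphi'(r)\,\Delta r$ and $\escpr{\nabla h,\nabla u}=f'(r)\,\varphi'(r)$. Using \eqref{eq:deltar} together with \eqref{eq:mcspheres}, and abbreviating $A(t):=A_h(S_t)=c_m\,w(t)^{m-1}e^{f(t)}$ from \eqref{eq:ahsphere}, I would rewrite
\[
\Delta^hu=\varphi''(r)+\big((m-1)\,H(r)+f'(r)\big)\,\varphi'(r)=\varphi''(r)+\frac{A'(r)}{A(r)}\,\varphi'(r),
\]
where the last equality uses $A'/A=(m-1)\,w'/w+f'=(m-1)\,H+f'$, which is precisely the $h$-mean curvature \eqref{eq:mchspheres} of the spheres. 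Thus $\Delta^hu=0$ on the annulus is equivalent to $(A(s)\,\varphi'(s))'=0$, i.e. $\varphi'(s)=C/A(s)$ for a constant $C$. Integrating and imposing the two boundary values selects exactly the stated $\varphi$, so $u$ is $h$-harmonic; uniqueness of the solution to \eqref{eq:laplace} then identifies $u$ as the $h$-capacity potential.

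For the capacity value I would use the boundary-integral expression in \eqref{eq:capint}. Along $S_\rho$ the outer unit normal $\nu$ of the annulus $B_R\setminus\overline{B}_\rho$ is $-\nabla r$, so $\partial u/\partial\nu=\escpr{\varphi'(r)\,\nabla r,-\nabla r}=-\varphi'(\rho)=|\varphi'(\rho)|$, the sign being positive because $\varphi$ is decreasing. Hence
\[
\text{Cap}^h(B_\rho,B_R)=\int_{S_\rho}|\varphi'(\rho)|\,da_h=|\varphi'(\rho)|\,A_h(S_\rho),
\]
and substituting $\varphi'(\rho)=-\big(A(\rho)\int_\rho^R dt/A(t)\big)^{-1}$ collapses this to $\big(\int_\rho^R dt/A_h(S_t)\big)^{-1}$. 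The final assertion about $\text{Cap}^h(B_\rho)$ then follows by letting $R\to\infty$ and using \eqref{eq:caplim} with the exhaustion of $M_w$ by the balls $B_R$.

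The computation is routine; the only point requiring a little care is the justification that the radial ansatz actually produces the capacity potential. This is handled cleanly by exhibiting a radial solution of the well-posed Dirichlet problem \eqref{eq:laplace} and appealing to uniqueness, rather than by trying to argue a priori that the potential is radial. A secondary point to track is the correct orientation of $\nu$ (pointing into $B_\rho$), which ensures the capacity comes out with the expected positive sign.
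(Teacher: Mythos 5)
Your proposal is correct and follows essentially the same route as the paper: compute $\Delta^h$ of a radial function via \eqref{eq:deltar}, reduce $h$-harmonicity to the ODE $\psi''+\big((m-1)H+f'\big)\psi'=0$ (which you usefully rewrite as $(A_h(S_s)\,\var'(s))'=0$), identify $\var$ by the boundary conditions and uniqueness, evaluate the capacity with \eqref{eq:capint} using $\var'<0$ and $\nu=-\nabla r$ on $S_\rho$, and pass to the limit via \eqref{eq:caplim}. No gaps.
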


\begin{proof}
The $h$-capacity potential of $(\overline{B}_\rho,B_R)$ is the unique solution to the Dirichlet problem
\begin{equation}
\label{eq:problem2}
\begin{cases}
\Delta^h u=0\,\,\,&\text{in\, $B_R\setminus\overline{B}_\rho$},\\
\phantom{\Delta^h }u=1\,\,\,&\text{in\, $S_\rho$}, \\
\phantom{\Delta^h }u=0\,\,\,&\text{in\, $S_R$}.
\end{cases}
\end{equation}
Given a radial function $u=\psi(r)$ with $\psi\in C^2([\rho,R])$, the chain rule implies $\nabla u=\psi'(r)\,\nabla r$ and $\Delta u=\psi''(r)+\psi'(r)\,\Delta r$ on $B_R\setminus\overline{B}_\rho$. By taking into account \eqref{eq:flaplacian} and \eqref{eq:deltar}, we obtain
\[
\Delta^h u=\psi''(r)+(m-1)\,H(r)\,\psi'(r)+f'(r)\,\psi'(r)
\]
on $B_R\setminus\overline{B}_\rho$. Hence $u$ is $h$-harmonic if and only if $\psi''+\big((m-1)\,H+f'\big)\,\psi'=0$. From here, a straightforward computation using \eqref{eq:ahsphere} shows that the function $\var$ in the statement provides the unique solution to \eqref{eq:problem2}. Now, the calculus of $\text{Cap}^h(B_\rho,B_R)$ comes from \eqref{eq:capint} having in mind that $\var'<0$ and $\nu=-\nabla r$ on $S_\rho$. Finally, to get $\text{Cap}^h(B_\rho)$ it suffices to apply \eqref{eq:caplim} since $\{B_t\}_{t>0}$ is an exhaustion of $M_w$ by precompact open sets.
\end{proof}

As a direct consequence of Proposition~\ref{prop:epm} and Theorem~\ref{th:parchar} we can characterize the $h$-parabolicity of weighted models by means of an Ahlfors-type criterion.

\begin{corollary}[{\cite[Ex.~9.5]{grigoryan2}}]
\label{cor:ahlfors}
A model manifold $M^m_w$ with a radial weight $e^h$ is $h$-parabolic if and only if there is some $t_0>0$ such that $\int_{t_0}^{\infty}A_h(S_t)^{-1}\,dt=\infty$.
\end{corollary}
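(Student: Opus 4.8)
The plan is to combine the explicit capacity computation of Proposition~\ref{prop:epm} with the capacitary characterization of parabolicity in Theorem~\ref{th:parchar}. First I would fix any radius $\rho>0$ and consider the closed ball $\overline{B}_\rho$, which is a compact subset of $M_w$ with non-empty interior. By the equivalence of conditions (i) and (ii) in Theorem~\ref{th:parchar}, the weighted model $M_w$ is $h$-parabolic if and only if $\text{Cap}^h(\overline{B}_\rho)=0$.

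Next I would invoke Proposition~\ref{prop:epm}, which yields the exact value
\[
\text{Cap}^h(\overline{B}_\rho)=\text{Cap}^h(B_\rho)=\left(\int_\rho^{\infty}\frac{dt}{A_h(S_t)}\right)^{-1}.
\]
Reading off this identity, the capacity vanishes precisely when the integral on the right diverges, under the convention that the reciprocal of $+\infty$ is $0$. Hence $M_w$ is $h$-parabolic if and only if $\int_\rho^{\infty}A_h(S_t)^{-1}\,dt=\infty$.

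Finally I would observe that the divergence of this integral does not depend on the lower endpoint, so that the existential ``there is some $t_0>0$'' in the statement is equivalent to the corresponding universal assertion. This is immediate because, by \eqref{eq:ahsphere}, the function $A_h(S_t)=c_m\,w(t)^{m-1}e^{f(t)}$ is continuous and strictly positive on $(0,\infty)$; hence $\int_{t_0}^{t_1}A_h(S_t)^{-1}\,dt<\infty$ for all $0<t_0<t_1<\infty$, and only the behaviour of the integrand as $t\to\infty$ is relevant for convergence.

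I do not expect a genuine obstacle here, since the substantial analytic work has already been performed in Proposition~\ref{prop:epm} and the characterization in Theorem~\ref{th:parchar}. The only point deserving a moment's care is the passage from the algebraic identity to the dichotomy $\text{Cap}^h(\overline{B}_\rho)=0 \Longleftrightarrow \int_\rho^\infty A_h(S_t)^{-1}\,dt=\infty$: one must read the reciprocal in Proposition~\ref{prop:epm} as giving capacity $0$ exactly when the defining integral is infinite, a convention consistent with obtaining $\text{Cap}^h(B_\rho)$ as the limit $R\to\infty$ of $\text{Cap}^h(B_\rho,B_R)$ through \eqref{eq:caplim}.
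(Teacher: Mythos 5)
Your argument is correct and is precisely the route the paper takes: it derives the corollary directly from the explicit capacity formula in Proposition~\ref{prop:epm} combined with the capacitary characterization of parabolicity in Theorem~\ref{th:parchar}, with the observation about the irrelevance of the lower endpoint being the same routine remark. Nothing to add.
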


\begin{remark}
We can use the corollary to show that the weighted parabolicity not only depends on the manifold, but also on the weight. For instance, the Euclidean plane is parabolic whereas it is hyperbolic with respect to the (anti)Gaussian weight $e^{r^2/2}$. On the other hand, $\rr^m$ is hyperbolic for any $m\geq 3$, whereas it is parabolic for the Gaussian weight $e^{-r^2/2}$.
\end{remark}

\section{Parabolicity and hyperbolicity results for submanifolds}
\label{sec:criteria}

In this section we provide some criteria ensuring weighted parabolicity or hyperbolicity for submanifolds of model spaces under restrictions on the geometry of the model, the ambient weight, and the weighted mean curvature of the submanifold. As in Corollary~\ref{cor:ahlfors} we will deduce our criteria by estimating the weighted capacity of sets in a certain exhaustion of the submanifold.

Let $P^n$ be a submanifold immersed in a model space $M^m_w$. For any $t>0$, the open metric ball $B_t$ in $M_w$ has associated the \emph{extrinsic open ball} in $P$ defined by
\[ 
D_t:=P\cap B_t=\{p\in P\,;\,r(p)<t\}.
\]
If we assume that $P$ is a non-compact submanifold \emph{properly immersed} in $M_w$, then the family $\{D_t\}_{t>0}$ gives an exhaustion of $P$ by precompact open sets. Moreover, since the restriction to $P\setminus\{o\}$ of the distance function $r$ is smooth, we deduce from Sard's theorem that $D_t$ has smooth non-empty boundary $\ptl D_t=\{p\in P\,;\,r(p)=t\}$ for almost any $t>0$. 

To estimate the weighted capacity of an extrinsic ball $D_\rho$ in $P$ we compare the capacity potentials of the extrinsic capacitors $(\overline{D}_\rho,D_R)$ in $P$  with the radial functions obtained by transplanting to $P$, via the distance function $r$, the capacity potentials of the intrinsic capacitors $(\overline{B}_\rho,B_R)$ in a suitable weighted model with the dimension of $P$. In particular, this requires to compute the weighted Laplacian in \eqref{weighted-Laplacian} for radial functions restricted to $P$. This is done in the next result, which extends to arbitrary weights a known formula for the Riemannian case $h=0$. 

\begin{lemma}
\label{equality-laplace-h} 
Let $P^n$ be a submanifold immersed in a model space $M^m_w$ with a weight $e^h$. Suppose that a radial function $v=\psi(r)$ with $\psi\in C^2$ is defined in some open set $D\subeq P\setminus\{o\}$. Then we have the following equality
\[
\Delta_P^h\,v= \big(\psi''(r)-H(r)\,\psi'(r)\big)\,|\nabla_P r|^2+\big(nH(r)+\escpr{\nabla h,\nabla r}+\escpr{\ovh^h,\nabla r}\big)\,\psi'(r),
\]
where $H$ is the function in \eqref{eq:mcspheres} and $\ovh^h$ is the $h$-mean curvature vector in \eqref{eq:mcvector}.
\end{lemma}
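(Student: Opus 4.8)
The plan is to start from the definition \eqref{weighted-Laplacian} of the weighted Laplacian relative to $P$, namely $\Delta_P^h v=\Delta_P v+\escpr{\nabla_P h,\nabla_P v}$, compute the two summands separately, and then merge them using the definition \eqref{eq:mcvector} of $\ovh^h$. First I would handle the unweighted Laplacian $\Delta_P v$. Since $v=\psi(r)$, the chain rule gives $\nabla_P v=\psi'(r)\,\nabla_P r$, and the product rule for $\divv_P$ yields $\Delta_P v=\psi''(r)\,|\nabla_P r|^2+\psi'(r)\,\Delta_P r$, where $\Delta_P r$ denotes the Laplacian relative to $P$ of the restriction of the distance function. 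The key ingredient is then the standard decomposition $\Delta_P r=\mathrm{tr}_P(\mathrm{Hess}\,r)+\escpr{n\ovh,\nabla r}$, obtained by writing $\nabla r=(\nabla r)^\top+(\nabla r)^\bot$ and invoking \eqref{eq:divnorm}.

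To evaluate the trace term I would insert the Hessian identity \eqref{eq:hessr}: for an orthonormal basis $\{e_i\}_{i=1}^n$ of the tangent space, each diagonal entry equals $H(r)\,\big(1-\escpr{\nabla r,e_i}^2\big)$, so summing and using $\sum_{i=1}^n\escpr{\nabla r,e_i}^2=|\nabla_P r|^2$ gives $\mathrm{tr}_P(\mathrm{Hess}\,r)=H(r)\,\big(n-|\nabla_P r|^2\big)$. Substituting this back produces
\[
\Delta_P v=\big(\psi''(r)-H(r)\,\psi'(r)\big)\,|\nabla_P r|^2+\psi'(r)\,\big(nH(r)+\escpr{n\ovh,\nabla r}\big).
\]
Next I would treat the weighted correction term. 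Writing $\nabla_P h=(\nabla h)^\top$ and again decomposing $\nabla r$ into its tangent and normal parts, one finds $\escpr{\nabla_P h,\nabla_P v}=\psi'(r)\,\escpr{\nabla h,\nabla r}-\psi'(r)\,\escpr{(\nabla h)^\bot,\nabla r}$.

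The final step is to combine the two contributions: using $n\ovh=\ovh^h+(\nabla h)^\bot$ from \eqref{eq:mcvector}, the term $\escpr{n\ovh,\nabla r}$ splits as $\escpr{\ovh^h,\nabla r}+\escpr{(\nabla h)^\bot,\nabla r}$, and the normal contribution $\escpr{(\nabla h)^\bot,\nabla r}$ cancels exactly against the one arising in the weighted correction, leaving the claimed identity. The computation is essentially a bookkeeping of tangent/normal projections, so I do not expect a genuine obstacle; the one point requiring care is the sign and placement of the mean-curvature term in $\Delta_P r$, which must be consistent with the paper's convention $\divv_P X=-\escpr{n\ovh,X}$ in \eqref{eq:divnorm} for $X$ normal to $P$. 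The clean cancellation of the $(\nabla h)^\bot$ contributions is precisely what singles out the weighted mean curvature vector $\ovh^h$ as the natural object appearing in the formula.
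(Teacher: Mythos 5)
Your proposal is correct and follows essentially the same route as the paper: decompose $\nabla_P r=\nabla r-(\nabla r)^\bot$, use \eqref{eq:divnorm} and the Hessian formula \eqref{eq:hessr} to get $\Delta_P r=nH(r)-H(r)\,|\nabla_P r|^2+\escpr{n\ovh,\nabla r}$, then cancel the $(\nabla h)^\bot$ contributions via \eqref{eq:mcvector}. The only (immaterial) difference is that you compute $\mathrm{tr}_P(\mathrm{Hess}\,r)$ by summing over a tangent frame directly, whereas the paper obtains the same quantity as $\Delta r$ minus the normal-direction Hessian terms.
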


\begin{proof}
The result comes from \eqref{weighted-Laplacian} by having in mind the expression for the Hessian of radial functions on submanifolds, see for instance \cite[Sect.~3.2]{Pa2}. We give a proof for the sake of completeness.

Take a point in $D$ and an orthonormal basis $\{N_1,\ldots,N_{m-n}\}$ of normal vectors at that point.
Since $\nabla_P r=(\nabla r)^\top=\nabla r-(\nabla r)^\bot$, we get
\begin{align*}
\Delta_P r&=\divv_P(\nabla r)-\divv_P((\nabla r)^\bot)=\divv (\nabla r)-\sum_{i=1}^{m-n}\escpr{D_{N_i}\nabla r,N_i}+\escpr{n\overline{H}_P,\nabla r}
\\
&=\Delta r-\sum_{i=1}^{m-n}(\text{Hess}\,r)(N_i,N_i)+\escpr{n\overline{H}_P,\nabla r}=nH(r)-H(r)\,|\nabla_Pr|^2+\escpr{n\overline{H}_P,\nabla r},
\end{align*}
where we have used \eqref{eq:divnorm}, \eqref{eq:hessr}, \eqref{eq:deltar} and that $1=|\nabla r|^2=|\nabla_P r|^2+|(\nabla r)^\bot|^2$. On the other hand, from the chain rule and the previous expression for $\Delta_P r$ we infer
\begin{align*} 
\Delta_P\,v&=\divv_P\big(\psi'(r)\,\nabla_P r\big)=\psi'(r)\,\Delta_P r+\psi''(r)\,|\nabla_P r|^2
\\
&=\big(\psi''(r)-H(r)\,\psi'(r)\big)\,|\nabla_P r|^2+\big(nH(r)+\escpr{n\overline{H}_P,\nabla r}\big)\,\psi'(r).
\end{align*}
Finally, we have
\begin{align*}
\Delta^h_P\,v&=\Delta_P\,v+\escpr{\nabla_P h,\nabla_P v}=\Delta_P\,v+\escpr{\nabla_P h,\nabla v}
\\
&=\Delta_P\,v+\escpr{\nabla_P h,\nabla r}\,\psi'(r)=\Delta_P\,v+\escpr{\nabla h,\nabla r}\,\psi'(r)-\escpr{(\nabla h)^\bot,\nabla r}\,\psi'(r),
\end{align*}
and the claim follows from the expression of $\Delta_P\,v$ and the equality $\ovh^h=n\overline{H}_P-(\nabla h)^\bot$.
\end{proof}

The previous lemma shows that, in order to eventually control the weighted Laplacian of radial functions restricted to $P$, we need to control the functions $\escpr{\nabla h,\nabla r}$ and $\escpr{\ovh^h,\nabla r}$, which measure the radial derivative of the logarithm of the weight, and the radial component of the $h$-mean curvature vector, respectively. Indeed, estimating these quantities by means of radial functions leads to a suitable weighted model to establish our comparisons. 

Now, we are ready to state and prove the main results of this section. The first one is a parabolicity criterion, which in the unweighted case $h=0$ follows from a more general statement by Esteve and the second author~\cite[Thm.~3.4]{esteve-palmer}. 

\begin{theorem}
\label{parabolicity-sub-model} 
Let $P^n$ be a non-compact submanifold properly immersed in a model space $M^m_w$ with weight $e^h$. Suppose that there is a number $t_0>0$ and a continuous function $\alpha:[t_0,\infty)\to\rr$ such that the following inequalities hold in $P\setminus D_{t_0}$
\begin{itemize}
\item[(A)] $\escpr{\nabla h,\nabla r}+\escpr{\ovh^h,\nabla r}\leq\alpha(r)$,
\item[(B)] $nH(r)+\alpha(r)\leq 0$.
\end{itemize}
In the $n$-dimensional model space $M^n_w$, we consider a radial weight $e^{f(r)}$ such that $f(t):=\int_{t_0}^t\alpha(s)\,ds$ for any $t\geq t_0$. Then, for any $\rho\geq t_0$ such that $\ptl D_\rho$ is smooth, we have
\[
\frac{\emph{Cap}^h_P(D_\rho)}{A_h(\ptl D_\rho)}\leq\frac{\C^f(B^n_\rho)}{A_f(S^{n-1}_\rho)},
\]
where $A_h$ and $A_f$ denote the weighted areas in $P$ and $M^n_w$, respectively, $B^n_t$ stands for the open metric ball of radius $t>0$ centered at the pole in $M^n_w$, and $S^{n-1}_t:=\ptl B^n_t$. Moreover, if 
\begin{equation}
\label{eq:icI}
\int_{t_0}^{\infty}\frac{dt}{A_f(S^{n-1}_t)}=\infty,
\end{equation}
then $P$ is $h$-parabolic.
\end{theorem}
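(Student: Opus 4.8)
The plan is to realize the comparison scheme sketched in the Introduction: transplant to $P$, via the distance function $r$, the $f$-capacity potential of the comparison model, show the resulting radial function is $h$-subharmonic, and conclude that it lies below the genuine extrinsic capacity potential, which yields the bound on $\C^h_P(D_\rho)$. Fix $\rho\geq t_0$ with $\partial D_\rho$ smooth and choose $R>\rho$ with $\partial D_R$ smooth (possible for almost every $R$ by Sard's theorem), so that the extrinsic annulus $A:=D_R\setminus\overline{D}_\rho$ is a smooth precompact domain contained in $P\setminus\{o\}$. In the comparison weighted model $M^n_w$ with radial weight $e^{f(r)}$, $f'=\alpha$, let $\varphi$ be the radial $f$-capacity potential of $(\overline{B}^n_\rho,B^n_R)$ furnished by Proposition~\ref{prop:epm}; recall $\varphi(\rho)=1$, $\varphi(R)=0$ and $\varphi'<0$. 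Set $u:=\varphi(r)$ on $A$.

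The key step is to prove that $u$ is $h$-subharmonic on $A$. First I would apply Lemma~\ref{equality-laplace-h} to $u$ and substitute the ODE satisfied by $\varphi$ as the $f$-capacity potential of $M^n_w$, namely $\varphi''+\big((n-1)H+f'\big)\varphi'=0$, obtained exactly as in the proof of Proposition~\ref{prop:epm}. This converts $\varphi''-H\varphi'$ into $-(nH+\alpha)\varphi'$ and gives
\[
\Delta^h_P u=\varphi'(r)\,\Big[-\big(nH(r)+\alpha(r)\big)\,|\nabla_P r|^2+nH(r)+\escpr{\nabla h,\nabla r}+\escpr{\ovh^h,\nabla r}\Big].
\]
The bracket is then estimated using both hypotheses: by (B) the coefficient $nH+\alpha\leq 0$, so since $0\leq|\nabla_P r|^2\leq 1$ one has $-(nH+\alpha)|\nabla_P r|^2\leq-(nH+\alpha)$; feeding this in and then applying (A) collapses the bracket to $\leq\big(\escpr{\nabla h,\nabla r}+\escpr{\ovh^h,\nabla r}\big)-\alpha(r)\leq 0$. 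As $\varphi'(r)<0$, the product is nonnegative, so $\Delta^h_P u\geq 0$. I expect this to be the heart of the argument and the main place to be careful: it is exactly where conditions (A), (B), the sign of $\varphi'$, and the contraction $|\nabla_P r|\leq 1$ must all conspire, and every sign has to be tracked correctly.

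With subharmonicity in hand, let $w$ be the $h$-capacity potential of the extrinsic capacitor $(\overline{D}_\rho,D_R)$ in $P$. Then $u-w$ is $h$-subharmonic on $A$ and vanishes on $\partial D_\rho\cup\partial D_R$, so by the maximum principle (Theorem~\ref{th:mp}(i)) $u\leq w$ on $A$. Since $u=w=1$ on $\partial D_\rho$ with $w\geq u$ nearby, comparing the normal derivatives along $\partial D_\rho$ (with $\nu=-\nabla_P r/|\nabla_P r|$ the outer unit normal of $A$, as in \eqref{eq:capint}) gives $\partial w/\partial\nu\leq\partial u/\partial\nu=-\varphi'(\rho)\,|\nabla_P r|$. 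Integrating against $da_h$, using \eqref{eq:capint}, the bound $|\nabla_P r|\leq 1$, and Proposition~\ref{prop:epm} to identify $-\varphi'(\rho)=\C^f(B^n_\rho,B^n_R)/A_f(S^{n-1}_\rho)$, yields
\[
\frac{\C^h_P(D_\rho,D_R)}{A_h(\partial D_\rho)}\leq-\varphi'(\rho)=\frac{\C^f(B^n_\rho,B^n_R)}{A_f(S^{n-1}_\rho)}.
\]
Letting $R\to\infty$ along admissible radii and invoking \eqref{eq:caplim} on both sides produces the stated capacity inequality.

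Finally, for the $h$-parabolicity conclusion I would note that hypothesis \eqref{eq:icI} is precisely the Ahlfors-type condition of Corollary~\ref{cor:ahlfors} for $M^n_w$ with weight $e^f$, so this model is $f$-parabolic; equivalently, by Proposition~\ref{prop:epm}, $\C^f(B^n_\rho)=\big(\int_\rho^\infty A_f(S^{n-1}_t)^{-1}\,dt\big)^{-1}=0$. The capacity inequality then forces $\C^h_P(D_\rho)=0$, and since $\overline{D}_\rho$ is a compact subset of $P$ with non-empty interior, Theorem~\ref{th:parchar} gives that $P$ is $h$-parabolic. The only routine points left to verify are that $u$ and $w$ are smooth up to $\partial A$ (so that the maximum principle and \eqref{eq:capint} apply, noting that $\partial D_\rho$ smooth means $\nabla_P r\neq 0$ there) and that $f$, defined on $[t_0,\infty)$, is $C^1$ on each annulus so Proposition~\ref{prop:epm} is legitimately applicable and the limit $R\to\infty$ may be taken along radii with $\partial D_R$ smooth.
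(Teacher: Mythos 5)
Your proposal is correct and follows essentially the same route as the paper: transplant the radial $f$-capacity potential of Proposition~\ref{prop:epm} to $P$, verify $h$-subharmonicity via Lemma~\ref{equality-laplace-h} together with (A), (B), $\varphi'<0$ and $|\nabla_P r|\leq 1$, compare with the extrinsic potential by the maximum principle, compare normal derivatives on $\ptl D_\rho$, integrate, let $R\to\infty$, and conclude via Corollary~\ref{cor:ahlfors} and Theorem~\ref{th:parchar}. The only cosmetic difference is that you obtain the inequality between normal derivatives from the elementary first-order consequence of $u\leq w$ with equality on $\ptl D_\rho$, whereas the paper phrases it through the Hopf boundary point lemma; both are valid.
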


\begin{proof}
Fix numbers $\rho\geq t_0$ and $R>\rho$ such that the extrinsic open balls $D_\rho$ and $D_R$ have smooth boundaries. Let $u$ be the $h$-capacity potential of $(\overline{D}_\rho,D_R)$, i.e., the unique solution to the problem
\[
\begin{cases}
\Delta_P^h u=0\,\,\,&\text{in\, $D_R\setminus\overline{D}_\rho$},\\
\phantom{\Delta_P^h }u=1\,\,\,&\text{in\, $\ptl D_\rho$}, \\
\phantom{\Delta_P^h }u=0\,\,\,&\text{in\, $\ptl D_R$}.
\end{cases}
\]
On the other hand, the $f$-capacity potential of the capacitor $(\overline{B^n_\rho},B^n_R)$ in the model space $M^n_w$ is, by Proposition~\ref{prop:epm}, the radial function associated to 
\[
\var(s):=\left(\int_\rho^R\frac{dt}{A_f(S^{n-1}_t)}\right)^{-1}\,\left(\int_s^R\frac{dt}{A_f(S^{n-1}_t)}\right),\quad\rho\leq s\leq R.
\]
As we showed in the proof of Proposition~\ref{prop:epm} the function $\var$ satisfies
\[
\var''+\big((n-1)H+\alpha\big)\,\var'=0,
\]
and so
\[
\var''-H\var'=-\big(nH+\alpha\big)\,\var',
\]
which is a nonpositive function by hypothesis (B) since $\var'<0$.

Now, in the extrinsic annulus $\overline{D}_R\setminus D_\rho$ we define the radial function $v:=\var(r)$. Clearly $v=1$ on $\ptl D_\rho$ and $v=0$ on $\ptl D_R$. By applying Lemma~\ref{equality-laplace-h}, the inequality $|\nabla _P r|\leq 1$ and hypothesis (A), we obtain
\begin{align*}
\Delta^h_P\,v&=\big(\var''(r)-H(r)\,\var'(r)\big)\,|\nabla_P r|^2+\big(nH(r)+\escpr{\ovh^h,\nabla r}+\escpr{\nabla h,\nabla r}\big)\,\var'(r)
\\
&\geq\var''(r)+\big((n-1)\,H(r)+\alpha(r)\big)\,\var'(r)=0.
\end{align*}
Thus, the function $v-u$ is $h$-subharmonic in $D_R\setminus\overline{D}_\rho$ and vanishes along $\ptl (D_R\setminus\overline{D}_\rho)$. As a consequence of the maximum principle in Theorem~\ref{th:mp} we get that, either $v-u=0$ in $D_R\setminus \overline{D}_\rho$, or $v-u$ achieves its maximum along $\ptl (D_R\setminus\overline{D}_\rho)$. From the Hopf boundary point lemma, the latter implies that $\frac{\ptl u}{\ptl\nu}<\frac{\ptl v}{\ptl\nu}$ in $\ptl D_\rho$, where $\nu$ is the outer unit normal along $\ptl (D_R\setminus\overline{D}_\rho)$, which coincides with the unit normal along $\ptl D_\rho$ pointing into $D_\rho$. By taking into account \eqref{eq:capint}, we deduce
\begin{align*}
\text{Cap}^h_P(D_\rho,D_R)&=\int_{\ptl D_\rho}\frac{\ptl u}{\ptl\nu}\,da_h\leq\int_{\ptl D_\rho}\frac{\ptl v}{\ptl\nu}\,da_h\leq\int_{\ptl D_\rho}|\nabla_P v|\,da_h
\\
&=|\var'(\rho)|\,\int_{\ptl D_\rho}|\nabla_Pr|\,da_h\leq |\var'(\rho)|\,A_h(\ptl D_\rho)=\frac{\text{Cap}^f(B^n_\rho,B^n_R)}{A_f(S^{n-1}_\rho)}\,A_h(\ptl D_\rho),
\end{align*}
where we have used the second equation in Proposition~\ref{prop:epm}. This shows that inequality
\[
\frac{\text{Cap}^h_P(D_\rho,D_R)}{A_h(\ptl D_\rho)}\leq\frac{\text{Cap}^f(B^n_\rho,B^n_R)}{A_f(S^{n-1}_\rho)}
\]
holds for a dense set of numbers $R>\rho$. By taking limits when $R\to\infty$ the desired comparison follows from \eqref{eq:caplim}. Finally the integrability condition in \eqref{eq:icI} is equivalent, by Corollary~\ref{cor:ahlfors}, to that the model space $M^n_w$ is $f$-parabolic. Hence, Theorem~\ref{th:parchar} and the capacity comparison yield that $\text{Cap}^h_P(D_\rho)=0$ for some $\rho\geq t_0$. From this we conclude that $P$ is $h$-parabolic. 
\end{proof}

Our second result provides weighted hyperbolicity by reversing the hypotheses of Theorem~\ref{parabolicity-sub-model}. For $h=0$ the criterion below is consequence of a more general result by Markvorsen and the second author, see \cite[Thm.~A, Thm.~7.1]{mp-transience}.

\begin{theorem}
\label{hyperbolicity-sub-model} 
Let $P^n$ be a non-compact submanifold properly immersed in a model space $M^m_w$ with weight $e^h$. Suppose that there is a number $t_0>0$ and a continuous function $\alpha:[t_0,\infty)\to\rr$ such that the following inequalities hold in $P\setminus D_{t_0}$
\begin{itemize}
\item[(A)] $\escpr{\nabla h,\nabla r}+\escpr{\ovh^h,\nabla r}\geq\alpha(r)$,
\item[(B)] $nH(r)+\alpha(r)\geq 0$.
\end{itemize}
In the $n$-dimensional model space $M^n_w$, we consider a radial weight $e^{f(r)}$ such that $f(t):=\int_{t_0}^t\alpha(s)\,ds$ for any $t\geq t_0$. Then, for any $\rho\geq t_0$ such that $\ptl D_\rho$ is smooth, we have
\[
\emph{Cap}^h_P(D_\rho)\geq\frac{\C^f(B^n_\rho)}{A_f(S^{n-1}_\rho)}\,\int_{\partial D_\rho} |\nabla_P r|\,da_h,
\]
where $A_f$ denotes the weighted area in $M^n_w$, $B^n_t$ stands for the open metric ball of radius $t>0$ centered at the pole in $M^n_w$, and $S^{n-1}_t:=\ptl B^n_t$. Moreover, if 
\begin{equation}
\label{eq:icII}
\int_{t_0}^{\infty}\frac{dt}{A_f(S^{n-1}_t)}<\infty,
\end{equation}
then $P$ is $h$-hyperbolic.
\end{theorem}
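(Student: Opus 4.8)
The plan is to run the proof of Theorem~\ref{parabolicity-sub-model} in reverse, tracking the sign changes forced by the reversed hypotheses together with the monotonicity $\var'<0$ of the radial model potential. Fixing $\rho\geq t_0$ and $R>\rho$ with $D_\rho,D_R$ of smooth boundary, I would again let $u$ be the $h$-capacity potential of $(\overline{D}_\rho,D_R)$ in $P$ and let $\var$ be the $f$-capacity potential of $(\overline{B^n_\rho},B^n_R)$ in $M^n_w$ furnished by Proposition~\ref{prop:epm}, so that $\var'<0$ and $\var''+((n-1)H+\alpha)\var'=0$. Now, however, hypothesis (B) gives $\var''-H\var'=-(nH+\alpha)\var'\geq 0$. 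The first step is to show that the transplanted function $v:=\var(r)$ is $h$-\emph{super}harmonic on the extrinsic annulus: combining Lemma~\ref{equality-laplace-h}, hypothesis (A), the estimate $|\nabla_P r|\leq 1$ applied to the \emph{nonnegative} coefficient $\var''-H\var'$, and the reversal produced by multiplying the radial term by $\var'<0$, one is led to
\[
\Delta^h_P v\leq\big(\var''(r)-H(r)\,\var'(r)\big)+\big(nH(r)+\alpha(r)\big)\,\var'(r)=\var''(r)+\big((n-1)H(r)+\alpha(r)\big)\,\var'(r)=0.
\]

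With $v$ now $h$-superharmonic, the function $u-v$ (rather than $v-u$) is $h$-subharmonic in $D_R\setminus\overline{D}_\rho$ and vanishes along the whole boundary. The maximum principle and Hopf lemma of Theorem~\ref{th:mp} then give $u\equiv v$ or the \emph{reversed} normal-derivative comparison $\frac{\ptl u}{\ptl\nu}\geq\frac{\ptl v}{\ptl\nu}$ along $\ptl D_\rho$, with $\nu$ the unit normal pointing into $D_\rho$. Since $\nu=-\nabla_P r/|\nabla_P r|$ there, one computes $\frac{\ptl v}{\ptl\nu}=|\var'(\rho)|\,|\nabla_P r|$, and \eqref{eq:capint} together with the second identity of Proposition~\ref{prop:epm} yields
\[
\text{Cap}^h_P(D_\rho,D_R)=\int_{\ptl D_\rho}\frac{\ptl u}{\ptl\nu}\,da_h\geq|\var'(\rho)|\int_{\ptl D_\rho}|\nabla_P r|\,da_h=\frac{\text{Cap}^f(B^n_\rho,B^n_R)}{A_f(S^{n-1}_\rho)}\int_{\ptl D_\rho}|\nabla_P r|\,da_h.
\]
The essential contrast with the parabolic case is that here, seeking a \emph{lower} bound, I must retain the factor $|\nabla_P r|\leq 1$ instead of discarding it in favour of $A_h(\ptl D_\rho)$; this is exactly why $\int_{\ptl D_\rho}|\nabla_P r|\,da_h$ appears in the statement. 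Letting $R\to\infty$ and using \eqref{eq:caplim} gives the claimed comparison.

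For the hyperbolicity conclusion I would appeal to Corollary~\ref{cor:ahlfors}: under \eqref{eq:icII} the model $M^n_w$ is $f$-hyperbolic, and equivalently Proposition~\ref{prop:epm} gives $\text{Cap}^f(B^n_\rho)=\big(\int_\rho^\infty A_f(S^{n-1}_t)^{-1}\,dt\big)^{-1}>0$. Because $\ptl D_\rho$ is a regular level set of $r|_P$ we have $|\nabla_P r|>0$ on it, so $\int_{\ptl D_\rho}|\nabla_P r|\,da_h>0$, and the comparison forces $\text{Cap}^h_P(D_\rho)>0$. As $\overline{D}_\rho$ is compact with non-empty interior, Theorem~\ref{th:parchar} then excludes $h$-parabolicity and $P$ is $h$-hyperbolic.

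The main obstacle I anticipate is purely the bookkeeping of signs. Since $\var'<0$, the reversed inequalities (A) and (B) must be assembled in the correct order so that the two contributions in Lemma~\ref{equality-laplace-h} each point the right way and combine to $\Delta^h_P v\leq 0$; in particular the bound $|\nabla_P r|\leq 1$ is only admissible because $\var''-H\var'\geq 0$. The second delicate point is the direction of the Hopf inequality: it is $u-v$, not $v-u$, that is subharmonic here, and this single switch is what turns the upper capacity estimate of Theorem~\ref{parabolicity-sub-model} into the lower estimate required for hyperbolicity.
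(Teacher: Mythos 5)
Your proposal is correct and follows essentially the same route as the paper's proof, which simply says to rerun the argument of Theorem~\ref{parabolicity-sub-model} with all inequalities reversed: you correctly obtain $\Delta^h_P v\leq 0$ from (A), (B) and $\var'<0$, apply the maximum principle and Hopf lemma to the $h$-subharmonic function $u-v$, and retain the factor $\int_{\ptl D_\rho}|\nabla_P r|\,da_h$ rather than bounding it by $A_h(\ptl D_\rho)$. The sign bookkeeping and the use of Sard's theorem to guarantee $\nabla_P r\neq 0$ on $\ptl D_\rho$ match the paper exactly.
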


\begin{proof} 
Take $\rho\geq t_0$ and $R>\rho$ such that $D_\rho$ and $D_R$ have smooth boundaries. By using Sard's theorem we can suppose that $\nabla_Pr\neq 0$ along $\ptl D_\rho$. We proceed as in the proof of Theorem \ref{parabolicity-sub-model}. Following the notation there, and reversing all the inequalities, we deduce that $u-v$ is an $h$-subharmonic function on $D_R\setminus\overline{D}_\rho$ vanishing at the boundary. The maximum principle and the Hopf boundary point lemma imply that $\frac{\ptl u}{\ptl\nu}\geq\frac{\ptl v}{\ptl\nu}$ along $\ptl D_\rho$. Moreover, $\nu=\frac{\nabla_P v}{|\nabla_P v|}$ along $\ptl D_\rho$ since $v=1$ and $\nabla_P v=\var'(\rho)\,\nabla_P r\neq 0$ along $\ptl D_\rho$.
As a consequence
\begin{align*}
\text{Cap}^h_P(D_\rho,D_R)&=\int_{\ptl D_\rho}\frac{\ptl u}{\ptl\nu}\,da_h\geq\int_{\ptl D_\rho}|\nabla_P v|\,da_h=|\var'(\rho)|\,\int_{\ptl D_\rho}|\nabla_Pr|\,da_h
\\
&=\frac{\text{Cap}^f(B^n_\rho,B^n_R)}{A_f(S^{n-1}_\rho)}\,\int_{\ptl D_\rho}|\nabla_Pr|\,da_h,
\end{align*}
so we get the desired comparison by letting $R\to\infty$. On the other hand, the integrability condition in \eqref{eq:icII} is equivalent, by Corollary~\ref{cor:ahlfors}, to the $f$-hyperbolicity of the model space $M^n_w$. From Theorem~\ref{th:parchar} and the previous comparison we infer that $\text{Cap}^h_P(D_\rho)>0$. This shows that $P$ is $h$-hyperbolic and completes the proof.
\end{proof}

Let us make some comments about the different hypotheses in the previous theorems.

\begin{remarks}
\label{re:hypotheses}
1. The hypothesis (A) holds provided $\escpr{\nabla h,\nabla r}$ and $\escpr{\ovh^h,\nabla r}$ are bounded (from above or from below) by continuous radial functions. By the Cauchy-Schwarz inequality this is guaranteed, for instance, if $h$ is radial and $P$ has bounded $h$-mean curvature. In the Riemannian context, several parabolicity results for submanifolds have been derived under the hypothesis that the radial mean curvature is bounded; besides the aforementioned references \cite{esteve-palmer} and \cite{mp-GAFA}, we refer the reader to \cite{MP5}, \cite{hp-ppar} and \cite{hmp-2009}.

2. The hypothesis (B) means that the function $\alpha(t)$ is balanced with respect to the warping function $w(t)$. This condition has a geometric meaning. In the model space $M^{n+1}_w$ consider a radial weight $e^{f(r)}$ such that $f(t):=\int_{t_0}^t\alpha(s)\,ds$ for any $t\geq t_0$. Then, equation \eqref{eq:mchspheres} shows that the $f$-mean curvature of the metric sphere or radius $t$ centered at the pole equals $nH(t)+\alpha(t)$ for any $t\geq t_0$. Hence, hypothesis (B) may be seen as a kind of weighted mean convexity for such spheres.
 
3. As we have mentioned in the proofs, the integrability conditions \eqref{eq:icI} and \eqref{eq:icII} are equivalent by Corollary~\ref{cor:ahlfors} to the $f$-parabolicity or $f$-hyperbolicity of the corresponding weighted comparison model.
\end{remarks}

The integrability hypotheses \eqref{eq:icI} and \eqref{eq:icII} are in general difficult to check. In the following remarks we show some sufficient conditions for them.

\begin{remarks}
\label{re:integrability}
1. The balance conditions (B) in Theorems~\ref{parabolicity-sub-model} and \ref{hyperbolicity-sub-model} imply the integrability conditions \eqref{eq:icI} and \eqref{eq:icII} under further hypotheses. Suppose for instance that $nH(r)+\alpha(r)\leq 0$ on $P\setminus D_{t_0}$. By integrating this inequality and taking into account \eqref{eq:mcspheres}, we obtain
\[
f(t):=\int_{t_0}^t\alpha(s)\,ds\leq\log\left(\frac{w(t_0)^n}{w(t)^n}\right),\quad t\geq t_0.
\]
From here and \eqref{eq:ahsphere} we get
\[
\int_{t_0}^\infty\frac{dt}{A_f(S^{n-1}_t)}=c^{-1}_n\,\int_{t_0}^\infty w(t)^{1-n}\,e^{-f(t)}\,dt\geq\frac{c_n^{-1}}{w(t_0)^n}\int_{t_0}^\infty w(t)\,dt.
\]
Thus, if $w\notin L^1(0,\infty)$, then $\int_{t_0}^\infty A_f(S^{n-1}_t)^{-1}\,dt=\infty$. The same argument shows that, if $w\in L^1(0,\infty)$, then the condition $nH(r)+\alpha(r)\geq 0$ on $P\setminus D_{t_0}$ implies that $\int_{t_0}^\infty A_f(S^{n-1}_t)^{-1}\,dt<\infty$. 

2. Suppose that $w(t)\to L$ with $L\in(0,\infty]$ when $t\to\infty$. Then, we have
\[
\lim_{t\to\infty}\frac{w(t)^{1-n}\,e^{-f(t)}}{e^{-f(t)}}=\lim_{t\to\infty} w(t)^{1-n}=L'\geq 0.
\]
It follows that the integral $\int_{t_0}^\infty w(t)^{1-n}\,e^{-f(t)}\,dt$, which depends on the warping function $w$, is finite provided the integral $\int_{t_0}^\infty e^{-f(t)}\,dt$, which does not depend on $w$, is finite. In a similar way we deduce that, if $f(t)\to L$ with $L\in(-\infty,\infty]$ when $t\to\infty$, and $\int_{t_0}^tw(t)^{1-n}\,dt<\infty$, then $\int_{t_0}^t w(t)^{1-n}\,e^{-f(t)}\,dt<\infty$.

3. The integrability of $w$ is somehow related to the Riemannian volume of $M^m_w$, which is given by $V(M_w):=c_m\,\int_0^\infty w(t)^{m-1}\,dt$. For instance, if $w$ has a finite limit at infinity, then the condition $w\in L^1(0,\infty)$ yields $V(M_w)<\infty$. On the other hand, if $w$ tends to $L\in(0,\infty]$ at infinity, then $w\notin L^1(0,\infty)$, the Riemannian areas of the metric spheres $S_t$ tend to $c_m\,L^{n-1}$ and $V(M_w)=\infty$. This happens in Euclidean space $\rr^m$, in hyperbolic space $\mathbb{H}^m(\kappa)$, and in hypersurfaces of revolution in $\rr^{m+1}$ whose distance with respect to the axis of revolution is nondecreasing.
\end{remarks}

In the remainder of this section we will deduce some consequences of the previous theorems for submanifolds having bounded $h$-mean curvature with respect to suitable weights. A very useful criterion is the following.

\begin{corollary}
\label{cor:useful}
Let $M^m_w$ be a model space such that $w\notin L^1(0,\infty)$ $($resp. $w\in L^1(0,\infty)$$)$ and the function $H$ in \eqref{eq:mcspheres} is bounded at infinity. Consider a weight $e^h$ and a non-compact submanifold $P^n$ properly immersed in $M_w$ such that $P$ has bounded $h$-mean curvature and $\escpr{\nabla h,\nabla r}\leq\beta(r)$ $($resp. $\escpr{\nabla h,\nabla r}\geq\beta(r)$$)$ on $P\setminus\{o\}$, for some continuous function $\beta:\rr^+\to\rr$ with $\beta(t)\to -\infty$ $($resp. $\beta(t)\to\infty$$)$ when $t\to\infty$. Then, $P$ is $h$-parabolic $($resp. $h$-hyperbolic$)$.
\end{corollary}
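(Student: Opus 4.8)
The plan is to derive the statement directly from Theorem~\ref{parabolicity-sub-model} in the first case and from Theorem~\ref{hyperbolicity-sub-model} in the second, by manufacturing a suitable continuous radial function $\alpha$ and a threshold $t_0$ for which hypotheses (A) and (B) hold, and then checking the integrability condition. I treat the parabolic case in detail; the hyperbolic one is entirely symmetric, obtained by reversing every inequality.

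First I would bound the radial component of the $h$-mean curvature vector. Since $P$ has bounded $h$-mean curvature there is $c\geq 0$ with $|\ovh^h|\leq c$, and as $|\nabla r|=1$ the Cauchy-Schwarz inequality gives $\escpr{\ovh^h,\nabla r}\leq c$. Together with the hypothesis $\escpr{\nabla h,\nabla r}\leq\beta(r)$ this produces $\escpr{\nabla h,\nabla r}+\escpr{\ovh^h,\nabla r}\leq\beta(r)+c$ on $P\setminus\{o\}$, so I set $\alpha(t):=\beta(t)+c$. This is continuous with $\alpha(t)\to-\infty$ as $t\to\infty$, and hypothesis (A) of Theorem~\ref{parabolicity-sub-model} holds (in particular on $P\setminus D_{t_0}$ for any $t_0$). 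For hypothesis (B), the boundedness of $H$ near infinity supplies $T$ and $H_0$ with $H(t)\leq H_0$ for $t\geq T$, whence $nH(t)+\alpha(t)\leq nH_0+\alpha(t)$; since $\alpha(t)\to-\infty$ I may fix $t_0\geq T$ large enough that $\alpha(t)\leq-nH_0$ for all $t\geq t_0$, which forces $nH(t)+\alpha(t)\leq 0$ on $P\setminus D_{t_0}$.

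It remains to verify \eqref{eq:icI} for the comparison weight $f(t):=\int_{t_0}^t\alpha(s)\,ds$, and this is precisely the implication recorded in Remark~\ref{re:integrability}(1). Integrating the balance inequality (B) and using $H=w'/w$ from \eqref{eq:mcspheres} gives $f(t)\leq\log\big(w(t_0)^n/w(t)^n\big)$, so by \eqref{eq:ahsphere},
\[
A_f(S^{n-1}_t)^{-1}=c_n^{-1}\,w(t)^{1-n}\,e^{-f(t)}\geq\frac{c_n^{-1}}{w(t_0)^n}\,w(t).
\]
Because $w\notin L^1(0,\infty)$ the tail integral $\int_{t_0}^\infty w(t)\,dt$ diverges, so $\int_{t_0}^\infty A_f(S^{n-1}_t)^{-1}\,dt=\infty$, and Theorem~\ref{parabolicity-sub-model} yields that $P$ is $h$-parabolic.

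For the hyperbolic case I would instead take $\alpha(t):=\beta(t)-c$, use $\escpr{\ovh^h,\nabla r}\geq-c$ and $\escpr{\nabla h,\nabla r}\geq\beta(r)$ to obtain the reversed (A), invoke the lower bound $H(t)\geq-H_0$ together with $\alpha(t)\to\infty$ to secure the reversed (B) for large $t_0$, and apply the $w\in L^1$ half of Remark~\ref{re:integrability}(1) to get \eqref{eq:icII}, concluding via Theorem~\ref{hyperbolicity-sub-model}. I do not anticipate any genuine obstacle in this argument; the only point requiring care is the uniform choice of $t_0$ guaranteeing that the balance condition (B) holds throughout the exterior region, which is exactly where the two hypotheses ``$H$ bounded'' and ``$\beta\to-\infty$'' (resp. ``$\beta\to\infty$'') must be combined.
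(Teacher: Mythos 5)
Your argument is correct and coincides with the paper's own proof: the authors likewise take $\alpha:=\beta+c$ with $|\ovh^h|\leq c$, obtain (A) by Cauchy--Schwarz, secure (B) for large $t_0$ from the boundedness of $H$ at infinity together with $\beta(t)\to\mp\infty$, and settle the integrability conditions \eqref{eq:icI} and \eqref{eq:icII} exactly via Remarks~\ref{re:integrability}(1). The only difference is cosmetic — you write out the computation from that remark rather than merely citing it.
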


\begin{proof}
We only prove the parabolicity case (the other one is similar). We will apply Theorem~\ref{parabolicity-sub-model}. Take a constant $c\geq 0$ such that $|\ovh^h|\leq c$ on $P$. Then, the Cauchy-Schwarz inequality implies the following estimate in $P\setminus\{o\}$
\[ 
\escpr{\nabla h,\nabla r}+\escpr{\ovh^h,\nabla r}\leq\beta(r)+|\ovh^h|\leq\beta(r)+c,
\]
so that the condition (A) holds. On the other hand, since $H$ is bounded at infinity and $\beta(t)\to-\infty$ when $t\to\infty$, there is $t_0>0$ such that
\[
nH(t)+\beta(t)+c\leq 0, \quad t\geq t_0,
\]
and so, the condition (B) is satisfied. Moreover in Remarks~\ref{re:integrability} we showed that the integrability condition \eqref{eq:icI} comes from (B) since $w\notin L^1(0,\infty)$. We conclude that $P$ is $h$-parabolic, as we claimed.
\end{proof}

\begin{remark}
The hypotheses on $w$ and $H$ in Corollary~\ref{cor:useful} guarantee the balance conditions and hence the integrability conditions \eqref{eq:icI} and \eqref{eq:icII}. These hypotheses are related to geometric quantities in the model space $M^m_w$, like the Riemannian volume and the mean curvature of the metric spheres $S_t$. The fact that $w\notin L^1(0,\infty)$ holds for instance if $w(t)\to L$ with $L\in (0,\infty]$ when $t\to\infty$. This happens for instance in the space forms $\rr^m$ and $\mathbb{H}^m(\kappa)$, where the function $H$ is also bounded at infinity. There are also many examples where $w\in L^1(0,\infty)$ and $H$ is bounded at infinity. This happens when $w(t)=(1+t^k)^{-1}$ with $k>1$, $w(t)=t^k$ with $k<-1$, or $w(t)=e^{-t}$ for $t\geq t_0$.
\end{remark}

As a direct application of Corollary~\ref{cor:useful} we deduce the following result for perturbations of radial weights that will be useful in Section~\ref{sec:char}.

\begin{corollary}
\label{cor:parabolicity}
Let $M^m_w$ be a model space such that $w\notin L^1(0,\infty)$ $($resp. $w\in L^1(0,\infty)$$)$ and the function $H$ in \eqref{eq:mcspheres} is bounded at infinity. Consider a weight $e^h$ with $h:=f(r)+g$, where $f(r),g\in C^1(M_w)$ and $f'(t)\to -\infty$ $($resp. $f'(t)\to\infty$$)$ when $t\to\infty$. If $P^n$ is a non-compact submanifold properly immersed in $M_w$ such that $P$ has bounded $h$-mean curvature and $\escpr{\nabla g,\nabla r}\leq \delta$ $($resp. $\escpr{\nabla g,\nabla r}\geq\delta$$)$ on $P\setminus\{o\}$ for some $\delta\in\rr$, then $P$ is $h$-parabolic $($resp. $h$-hyperbolic$)$.
\end{corollary}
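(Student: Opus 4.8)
The strategy is to reduce this corollary directly to Corollary~\ref{cor:useful}, which already handles submanifolds of bounded $h$-mean curvature once we control $\escpr{\nabla h,\nabla r}$ by a radial function diverging to $\mp\infty$. The point is that the weight here splits as $h=f(r)+g$, where the radial part $f(r)$ is responsible for the divergence and $g$ contributes only a bounded radial perturbation. So the whole task is to show that $\escpr{\nabla h,\nabla r}$ admits the correct radial bound, and then Corollary~\ref{cor:useful} applies verbatim.

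I would first compute $\escpr{\nabla h,\nabla r}$ using the decomposition of the gradient. Since $h=f(r)+g$, linearity gives $\nabla h=\nabla(f(r))+\nabla g=f'(r)\,\nabla r+\nabla g$, using the chain rule $\nabla(f(r))=f'(r)\,\nabla r$ (valid on $M_w\setminus\{o\}$ where $r$ is smooth). Taking the inner product with $\nabla r$ and recalling $|\nabla r|^2=1$, I obtain
\[
\escpr{\nabla h,\nabla r}=f'(r)+\escpr{\nabla g,\nabla r}.
\]
In the parabolicity case the hypothesis $\escpr{\nabla g,\nabla r}\leq\delta$ then yields $\escpr{\nabla h,\nabla r}\leq f'(r)+\delta=:\beta(r)$, where $\beta(t):=f'(t)+\delta$ is continuous and satisfies $\beta(t)\to-\infty$ as $t\to\infty$, precisely because $f'(t)\to-\infty$ and $\delta$ is a fixed constant. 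In the hyperbolicity case the reversed inequality $\escpr{\nabla g,\nabla r}\geq\delta$ gives the lower bound $\escpr{\nabla h,\nabla r}\geq f'(r)+\delta=:\beta(r)$ with $\beta(t)\to+\infty$.

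With this radial bound in hand, the remaining hypotheses of Corollary~\ref{cor:useful} are all inherited from the statement: $P$ is a non-compact submanifold properly immersed in $M_w$ with bounded $h$-mean curvature, the function $H$ in \eqref{eq:mcspheres} is bounded at infinity, and $w\notin L^1(0,\infty)$ (resp. $w\in L^1(0,\infty)$). Applying Corollary~\ref{cor:useful} with this choice of $\beta$ therefore concludes that $P$ is $h$-parabolic (resp. $h$-hyperbolic). I expect no genuine obstacle here; the only point requiring a moment's care is confirming that the constant shift by $\delta$ does not affect the divergence of $\beta$, and that the chain-rule identity for $\nabla(f(r))$ is justified on $P\setminus\{o\}$, which it is since $f\in C^1$ and $r$ is smooth away from the pole. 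The corollary is thus essentially a clean specialization of the preceding one.
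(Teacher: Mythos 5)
Your proposal is correct and matches the paper's intent exactly: the paper states this corollary as a ``direct application of Corollary~\ref{cor:useful}'' with no written proof, and the intended argument is precisely your computation $\escpr{\nabla h,\nabla r}=f'(r)+\escpr{\nabla g,\nabla r}$ followed by the choice $\beta(t):=f'(t)+\delta$. Nothing is missing.
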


In the particular situation of radial weights we can prove the next corollary.

\begin{corollary}
\label{cor:radialcase}
Let $M^m_w$ be a model space such that $w\notin L^1(0,\infty)$ $($resp. $w\in L^1(0,\infty)$$)$ and the function $H$ in \eqref{eq:mcspheres} is bounded at infinity. Consider a $C^1$ weight $e^{h}$ with $h:=f(r)$ and $f'(t)\to -\infty$ $($resp. $f'(t)\to\infty$$)$ when $t\to\infty$. If $P^n$ is a non-compact submanifold properly immersed in $M_w$ with $|\ovh^h|\leq c$ for some constant $c\geq 0$, then $P$ is $h$-parabolic $($resp. $h$-hyperbolic$)$. Moreover, if instead of assuming $w\in L^1(0,\infty)$, we suppose that $w(t)\to L$ with $L\in (0,\infty]$ when $t\to\infty$, and that
\[
\int_{0}^\infty e^{ct-f(t)}\,dt<\infty,
\] 
then $P$ is $h$-hyperbolic.
\end{corollary}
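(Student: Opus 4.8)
The plan is to reduce the first assertion to Corollary~\ref{cor:useful} and to prove the last (hyperbolicity) assertion by applying Theorem~\ref{hyperbolicity-sub-model} directly. For the first part, note that since $h=f(r)$ we have $\nabla h=f'(r)\,\nabla r$ in $M_w$, and because $|\nabla r|=1$ this yields the exact identity $\escpr{\nabla h,\nabla r}=f'(r)$ on $P\setminus\{o\}$. Taking the continuous radial function $\beta:=f'$, which tends to $-\infty$ (resp.\ $\infty$) by hypothesis, the estimate $\escpr{\nabla h,\nabla r}\leq\beta(r)$ (resp.\ $\geq\beta(r)$) holds with equality. As $P$ has bounded $h$-mean curvature, Corollary~\ref{cor:useful} applies verbatim and gives the $h$-parabolicity (resp.\ $h$-hyperbolicity) of $P$.

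For the final assertion I would work with Theorem~\ref{hyperbolicity-sub-model} from scratch, since the weaker assumption $w(t)\to L\in(0,\infty]$ forces $w\notin L^1(0,\infty)$, so the branch of Corollary~\ref{cor:useful} relying on $w\in L^1$ is no longer available. Fixing $c\geq 0$ with $|\ovh^h|\leq c$ on $P$, I combine $\escpr{\nabla h,\nabla r}=f'(r)$ with the Cauchy--Schwarz bound $\escpr{\ovh^h,\nabla r}\geq-|\ovh^h|\geq-c$ to obtain
\[
\escpr{\nabla h,\nabla r}+\escpr{\ovh^h,\nabla r}\geq f'(r)-c=:\alpha(r),
\]
which is hypothesis~(A) with this continuous $\alpha$. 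Since $H$ is bounded at infinity and $f'(t)\to\infty$, the function $nH(t)+\alpha(t)=nH(t)+f'(t)-c$ is eventually non-negative, so hypothesis~(B) holds on $P\setminus D_{t_0}$ for a suitable $t_0>0$.

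It then remains to check the integrability condition \eqref{eq:icII} for the comparison weight $e^{\tilde f(r)}$ in $M^n_w$ determined by $\tilde f(t):=\int_{t_0}^t\alpha(s)\,ds=f(t)-f(t_0)-c(t-t_0)$. By \eqref{eq:ahsphere} we have $A_{\tilde f}(S^{n-1}_t)=c_n\,w(t)^{n-1}\,e^{\tilde f(t)}$, and hence
\[
\int_{t_0}^{\infty}\frac{dt}{A_{\tilde f}(S^{n-1}_t)}=\frac{e^{f(t_0)-ct_0}}{c_n}\int_{t_0}^{\infty}w(t)^{1-n}\,e^{ct-f(t)}\,dt.
\]
The hypothesis $w(t)\to L\in(0,\infty]$ keeps $w(t)^{1-n}$ bounded on $[t_0,\infty)$ --- exactly as in Remark~\ref{re:integrability}.2, using that $w$ is continuous, positive, and bounded away from $0$ there --- so the last integral is dominated by a constant multiple of $\int_{t_0}^{\infty}e^{ct-f(t)}\,dt$, which is finite by assumption. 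Thus \eqref{eq:icII} holds and Theorem~\ref{hyperbolicity-sub-model} yields the $h$-hyperbolicity of $P$.

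I expect the only delicate point to be the bookkeeping in this last step: one must observe that the term $-c$ appearing in $\alpha$ transfers precisely into the factor $e^{ct}$ in the comparison density $e^{-\tilde f}$, so that the prescribed integral $\int_0^\infty e^{ct-f(t)}\,dt<\infty$ is exactly what governs \eqref{eq:icII} once the boundedness of $w^{1-n}$ is invoked. Everything else is a routine specialization of the two main theorems of this section.
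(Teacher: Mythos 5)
Your proof is correct and follows essentially the same route as the paper: the first assertion is reduced to Corollary~\ref{cor:useful} (the paper phrases this via Corollary~\ref{cor:parabolicity} with $g=0$, which is the same thing), and the final assertion is obtained by applying Theorem~\ref{hyperbolicity-sub-model} with $\alpha(r)=f'(r)-c$ and checking \eqref{eq:icII} exactly as in Remarks~\ref{re:integrability}.2. The paper's proof is only a two-line citation; your write-up correctly supplies the details it leaves implicit, including the key bookkeeping that the $-c$ in $\alpha$ produces the factor $e^{ct}$ in the comparison density.
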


\begin{proof}
The case where $w\notin L^1(0,\infty)$ (resp. $w\in L^1(0,\infty)$) comes from Corollary~\ref{cor:parabolicity}. For the other integrability condition the statement follows from Theorem~\ref{hyperbolicity-sub-model} and Remarks~\ref{re:integrability}.
\end{proof}

\begin{example}
Take a model space $M^m_w$ with $w\notin L^1(0,\infty)$ and $H$ bounded at infinity. In $M_w$ we consider the radial weight $e^{f(r)}$ such that  $f(t):=a\,t^k+g(t)$, where $a<0$, $k>1$ and $g:\rr^+_0\to\rr$ is a $C^2$ concave function with $g'(0)=0$. Note that $g'\leq 0$ since $g'$ is nonincreasing and $g'(0)=0$. Thus $f'(t)\to-\infty$ when $t\to\infty$, and we can apply Corollary~\ref{cor:radialcase} to deduce that any submanifold properly immersed in $M_w$ with bounded $f$-mean curvature is $f$-parabolic. This holds in particular for radial log-concave perturbations of the Gaussian weight $e^{-r^2/2}$ in $M_w$. In the special case of proper self-shrinkers (i.e. minimal submanifolds for the Gaussian weight in $\rr^m$), the parabolicity was obtained by Cheng and Zhou~\cite[Thm.~4.1]{cheng-zhou-volume}, who proved that they have finite weighted volume. 
\end{example}

\begin{example}
Consider a model space $M^m_w$ with $H$ bounded at infinity. Take a radial weight $e^{f(r)}$ such that $f(t):=a\,t^k+g(t)$, where $a>0$, $k>1$ and $g:\rr^+_0\to\rr$ is a $C^2$ convex function with $g'(0)=0$. Suppose that either $w\in L^1(0,\infty)$, or $w(t)\to L$ with $L\in(0,\infty]$ when $t\to\infty$. For any $c\geq0$, we get
\[
\lim_{t\to\infty}\frac{e^{ct-f(t)}}{e^{ct-at^k}}=\lim_{t\to\infty}e^{-g(t)}=L'\geq 0,
\]
so that $\int_0^\infty e^{ct-f(t)}\,dt<\infty$. Thus, Corollary~\ref{cor:radialcase} entails that any non-compact submanifold $P^n$ properly immersed in $M_w$ with bounded $f$-mean curvature is $f$-hyperbolic.
This holds in particular for radial log-convex perturbations of the (anti)Gaussian weight $e^{r^2/2}$ in $M_w$.
\end{example}

Recall that minimal submanifolds in $\rr^m$ for the (anti)Gaussian weight $e^{r^2/2}$ coincide with the self-expanders of the mean curvature flow. Since they are always noncompact, see \cite[Prop.~5.3]{cao-li}, we can deduce the following consequence from the previous example. 

\begin{corollary} 
\label{cor:self-expanders}
A non-compact submanifold properly immersed in $\rr^m$ with bounded mean curvature with respect to the $($anti$)$Gaussian weight $e^{r^2/2}$ is weighted hyperbolic. In particular, all properly immersed self-expanders are weighted hyperbolic.
\end{corollary}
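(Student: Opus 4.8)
The plan is to derive the first assertion as a direct application of the second (\emph{Moreover}) part of Corollary~\ref{cor:radialcase} to the model space $\rr^m$, for which $w(t)=t$, and then to specialize to self-expanders. First I would check the structural hypotheses on the model: from \eqref{eq:mcspheres} we have $H(t)=w'(t)/w(t)=1/t$, which tends to $0$ and is hence bounded at infinity, while $w(t)=t\to\infty$, so $w(t)\to L$ with $L=\infty\in(0,\infty]$ (in particular $w\notin L^1(0,\infty)$, so the plain hyperbolicity branch does not apply and we genuinely need the \emph{Moreover} clause). Next I would match the weight: the (anti)Gaussian weight $e^{r^2/2}$ is the radial weight $e^{f(r)}$ with $f(t)=t^2/2$, so that $f'(t)=t\to\infty$ as required for the hyperbolicity case, and the integrability condition of Corollary~\ref{cor:radialcase} reduces to $\int_0^\infty e^{ct-t^2/2}\,dt<\infty$, which holds for every fixed $c\geq 0$. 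Thus, for any non-compact submanifold $P^n$ properly immersed in $\rr^m$ with $|\ovh^h|\leq c$, Corollary~\ref{cor:radialcase} yields that $P$ is $h$-hyperbolic, which is precisely the first assertion.

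For the second assertion, I would recall from Example~\ref{ex:singularity} that the self-expanders are exactly the $h$-minimal submanifolds for the weight $e^{r^2/2}$, so they satisfy $\ovh^h=0$ and in particular have bounded $h$-mean curvature with the optimal constant $c=0$. Since properly immersed self-expanders are always non-compact by \cite[Prop.~5.3]{cao-li}, they meet the hypotheses of the first assertion and are therefore weighted hyperbolic.

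There is essentially no analytic obstacle here: the argument is a bookkeeping verification that $\rr^m$ with the (anti)Gaussian weight fits the parameters of Corollary~\ref{cor:radialcase}. The only two points deserving attention are the convergence of $\int_0^\infty e^{ct-t^2/2}\,dt$ for each $c\geq 0$, which is immediate from the quadratic exponent dominating the linear one, and the observation that the $h$-minimality of self-expanders furnishes the bounded-$h$-mean-curvature hypothesis (with $c=0$) needed to invoke the first assertion.
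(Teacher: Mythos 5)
Your proposal is correct and follows essentially the same route as the paper: the paper deduces this corollary from the example immediately preceding it, which is exactly the application of the \emph{Moreover} clause of Corollary~\ref{cor:radialcase} with $w(t)=t$, $f(t)=t^2/2$ (the case $a=1/2$, $k=2$, $g=0$), together with the non-compactness of self-expanders from \cite[Prop.~5.3]{cao-li}. Your explicit verification of $H(t)=1/t$ bounded at infinity, $w(t)\to\infty$, $f'(t)=t\to\infty$ and $\int_0^\infty e^{ct-t^2/2}\,dt<\infty$ is precisely the bookkeeping the paper leaves to that example.
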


\begin{remark}
Recently Gimeno and the second author \cite[Cor.~3.3]{gimeno-palmer-preprint} have proved that any complete and possibly non proper self-expander $P^n$ of the mean curvature flow in $\rr^m$ with dimension $n>2$ is hyperbolic in the classical (Riemannian) sense. We must mention that there are $2$-dimensional self-expanders which are parabolic in the classical sense, see \cite[Ex.~5.2]{gimeno-palmer-preprint}.
\end{remark}

There are radial weights where Corollary~\ref{cor:radialcase} does not apply. The next result covers an interesting case, which includes the radial homogeneous weights in $\rr^m$.

\begin{corollary}
\label{cor:radial2}
Consider a model space $M^m_w$ where the spheres $S_t$ are convex at infinity, i.e., the function $H$ in \eqref{eq:mcspheres} satisfies $H(t)\geq 0$ for any $t\geq t_0$. Let $P^n$ be an $h$-minimal non-compact submanifold properly immersed in $M_w$ with $o\notin P$. Then, for the weight $w(r)^k$ in $M_w\setminus\{0\}$, we have:
\begin{itemize}
\item[(i)] if $k\leq -n$, then $P$ is weighted parabolic,
\item[(ii)] if $k>-n$ and $\int_{t_0}^\infty w(t)^{1-n-k}\,dt<\infty$, then $P$ is weighted hyperbolic.
\end{itemize}
\end{corollary}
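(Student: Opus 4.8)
The plan is to reduce both statements to a direct application of Theorems~\ref{parabolicity-sub-model} and \ref{hyperbolicity-sub-model} with the natural choice $\alpha:=kH$. First I would observe that the weight $w(r)^k$ corresponds to the radial function $h=k\log w(r)$, which is smooth on $P$ because $o\notin P$ and $P$ is properly immersed, so $\inf_P r>0$. Since $h$ is radial we have $\nabla h=f'(r)\,\nabla r$ with $f'(t)=k\,w'(t)/w(t)=k\,H(t)$ by \eqref{eq:mcspheres}, whence $\escpr{\nabla h,\nabla r}=kH(r)$; and since $P$ is $h$-minimal, $\ovh^h=0$ by \eqref{eq:mcvector}, so $\escpr{\ovh^h,\nabla r}=0$. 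Therefore the radial quantity $\escpr{\nabla h,\nabla r}+\escpr{\ovh^h,\nabla r}$ controlling both theorems equals $kH(r)$ identically, and taking $\alpha(t):=kH(t)$ makes hypothesis (A) an equality; in particular (A) holds in the form required by each theorem.

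Next I would identify the comparison model. With this $\alpha$ we get $f(t)=\int_{t_0}^t\alpha(s)\,ds=k\log\big(w(t)/w(t_0)\big)$, so by \eqref{eq:ahsphere} the weighted area of the spheres in $M^n_w$ is $A_f(S^{n-1}_t)=c_n\,w(t_0)^{-k}\,w(t)^{\,n-1+k}$, and hence
\[
\int_{t_0}^\infty\frac{dt}{A_f(S^{n-1}_t)}=\frac{w(t_0)^k}{c_n}\int_{t_0}^\infty w(t)^{\,1-n-k}\,dt.
\]
This reduces both integrability conditions \eqref{eq:icI} and \eqref{eq:icII} to the divergence or convergence of $\int_{t_0}^\infty w(t)^{\,1-n-k}\,dt$.

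For (i), when $k\leq -n$ I would check hypothesis (B) of Theorem~\ref{parabolicity-sub-model}: $nH(r)+\alpha(r)=(n+k)\,H(r)\leq 0$, since $H\geq 0$ on $[t_0,\infty)$ and $n+k\leq 0$. The key point, and the step that uses the convexity at infinity rather than a growth assumption on $w$, is the divergence of the model integral: $H\geq 0$ forces $w'\geq 0$, so $w$ is nondecreasing and $w(t)\geq w(t_0)>0$ for $t\geq t_0$; as the exponent satisfies $1-n-k\geq 1>0$, we obtain $w(t)^{\,1-n-k}\geq w(t_0)^{\,1-n-k}>0$, and the integral diverges, giving \eqref{eq:icI}. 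Theorem~\ref{parabolicity-sub-model} then yields the $h$-parabolicity of $P$. For (ii), when $k>-n$ I would instead verify (B) of Theorem~\ref{hyperbolicity-sub-model}, namely $(n+k)\,H(r)\geq 0$, which holds since $H\geq 0$ and $n+k>0$; the hypothesis $\int_{t_0}^\infty w(t)^{\,1-n-k}\,dt<\infty$ is precisely \eqref{eq:icII} by the displayed computation, so Theorem~\ref{hyperbolicity-sub-model} gives the $h$-hyperbolicity of $P$.

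I do not expect a genuine obstacle beyond bookkeeping the two sign cases: the only mild care is to fix $t_0$ large enough that the convexity $H\geq 0$ holds on $[t_0,\infty)$ and that $P\setminus D_{t_0}\neq\emptyset$, which is possible because $P$ is non-compact and properly immersed. The one slightly subtle point worth emphasizing in the writeup is that in case (i) the divergence of the integral is extracted from the monotonicity of $w$ implied by $H\geq 0$, rather than from an $L^1$-type hypothesis on $w$ as in Remarks~\ref{re:integrability}.
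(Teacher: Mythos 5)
Your proposal is correct and follows essentially the same route as the paper: both apply Theorems~\ref{parabolicity-sub-model} and \ref{hyperbolicity-sub-model} with $\alpha=kH$, observe that (A) is an identity for an $h$-minimal submanifold with this radial weight, and get (B) from $(n+k)H(r)\leq 0$ or $\geq 0$. The only cosmetic difference is that you verify the divergence of $\int_{t_0}^\infty w(t)^{1-n-k}\,dt$ in case (i) by bounding the integrand below using the monotonicity of $w$, whereas the paper deduces $w\notin L^1(0,\infty)$ from $H\geq 0$ and then invokes Remarks~\ref{re:integrability}; both arguments rest on the same fact and are equally valid.
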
 

\begin{proof}
The statement comes from Theorems~\ref{parabolicity-sub-model} and \ref{hyperbolicity-sub-model}. The logarithm of the weight is the function $h=f(r)$ with $f(t):=k\,\log(w(t))$. It is clear that
\[
\escpr{\nabla h,\nabla r}+\escpr{\ovh^h,\nabla r}=k\,H(r),
\]
and so condition (A) holds. Moreover, since $H(t)\geq 0$ for any $t\geq t_0$, the condition (B) also holds. The fact that $H(t)\geq 0$ for $t\geq t_0$ ensures that $w$ is nondecreasing, so that $w\notin L^1(0,\infty)$. By Remarks~\ref{re:integrability} it follows that the integrability condition \eqref{eq:icI} is satisfied. On the other hand, the integrability hypothesis in (ii) coincides with \eqref{eq:icII}.
\end{proof}

More consequences of Theorems~\ref{parabolicity-sub-model} and \ref{hyperbolicity-sub-model}, including some geometric properties for generalized translating solitons in $\rr^m$, will be derived later in the context of our characterization results for submanifolds.

\section{Characterization results for submanifolds}
\label{sec:char}

The Liouville property of parabolic submanifolds provides interesting information when applied to functions with geometric meaning. In this section we obtain several results in this line by combining our previous study of parabolicity with the analysis of some functions having spheres, cylinders or hyperplanes as level sets. As a consequence, for certain weights in a model space $M^m_w$, we will be able to characterize hypersurfaces with bounded weighted mean curvature and contained into some regions of $M_w$. We will focus on Euclidean space $\rr^m$, where we will derive half-space and Bernstein-type theorems. 

The section is organized into several subsections where we treat the different situations.

\subsection{Ball results}
\label{subsec:ball}
\noindent

Here we study submanifolds inside or outside a metric ball centered at the pole in $M^m_w$. For a radial weight $e^h$ in $M_w$ with $h=f(r)$ recall that the $h$-mean curvature of the metric sphere $S_t$ with respect to the unit normal $-\nabla r$ is given by 
\begin{equation}
\label{eq:mchspheres2}
H^h(t)=(m-1)\,H(t)+f'(t),
\end{equation}
where $H$ is the Riemannian mean curvature of $S_t$ in \eqref{eq:mcspheres}. This shows that $S_t$ has constant $h$-mean curvature. Note that $H^h(t)\to\infty$ when $t\to 0$. Moreover, if $H$ is bounded at infinity and $f'(t)\to-\infty$ when $t\to\infty$, then $H^h(t)\to-\infty$ when $t\to\infty$. This would imply that, for any $\la_0\in\rr$, there is $t_0>0$ such that $H^h(t_0)=\la_0$. In particular, there would exist $h$-minimal spheres in $M_w$. 

\begin{example}
\label{ex:critical}
In $\rr^m$ with Gaussian weight $e^{-r^2/2}$ we have $H^h(t)=(m-1)/t-t$, so that the critical radius $t_0>0$ for which $H^h(t_0)=\la_0$ is
\[
t_0=\frac{-\la_0+\sqrt{\la_0^2+4\,(m-1)}}{2}.
\] 
Hence the unique $h$-minimal sphere is $S_{\sqrt{m-1}}$.
\end{example}

Now, we can prove a geometric restriction for $n$-dimensional submanifolds outside an open ball $B_t$. We need to control the $h$-mean curvature function of the metric spheres inside the model space $M^{n+1}_w$ with radial weight $e^{h}$. This function is denoted by $H^h_n(t)$, and it is defined as in \eqref{eq:mchspheres2} by replacing $m-1$ with $n$. Obviously $H^h_{m-1}=H^h$. 

\begin{theorem}
\label{th:ball1}
Let $M^m_w$ be a model space such that $w\notin L^1(0,\infty)$ and the function $H$ in \eqref{eq:mcspheres} is bounded at infinity. Consider a weight $e^h$ with $h:=f(r)$ and $f'(t)\to-\infty$ when $t\to\infty$. For a fixed number $\la_0\geq 0$, let $t_0>0$ be the first number such that $H_n^h(t)\leq-\la_0$ for any $t\geq t_0$. If $P^n$ is a submanifold properly immersed in $M_w$ with $P\sub M_w\setminus B_{t_0}$ and $|\ovh^h|\leq\la_0$, then $P\subeq S_t$ for some $t\geq t_0$ such that $H_n^h(t)=-\la_0$, and $\ovh^h=\la_0\,\nabla r$ on $P$. If $P$ is a hypersurface then $P=S_t$. Moreover, if $H_n^h$ is decreasing then $P\subeq S_{t_0}$, with equality when $P$ is a hypersurface.  
\end{theorem}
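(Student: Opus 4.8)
The plan is to deduce the result from the $h$-parabolicity of $P$ together with the superharmonicity of a suitable radial function, following the scheme announced in the Introduction. First I would observe that $P$ is $h$-parabolic: if $P$ is compact this is automatic, and if $P$ is non-compact it follows from Corollary~\ref{cor:radialcase}, whose hypotheses ($w\notin L^1(0,\infty)$, $H$ bounded at infinity, $f'(t)\to-\infty$, and $|\ovh^h|\leq\la_0$) are precisely those assumed here.

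Next I would introduce on $P$ the radial function $v:=\psi(r)$, where $\psi$ is chosen so that the anisotropic term $\psi''-H\psi'$ in Lemma~\ref{equality-laplace-h} vanishes. Solving $(\log\psi')'=H=w'/w$ gives $\psi'=w$, so I take $\psi(s):=\int_{t_0}^s w(\tau)\,d\tau$ (which reduces to the function $r^2/2$ in $\rr^m$). Since $o\notin P$ the function $r$ is smooth on $P$ and Lemma~\ref{equality-laplace-h} applies; using $\escpr{\nabla h,\nabla r}=f'(r)$ and $H_n^h(r)=nH(r)+f'(r)$ it collapses to
\[
\Delta^h_P v=\big(H_n^h(r)+\escpr{\ovh^h,\nabla r}\big)\,w(r).
\]
On $P\sub M_w\setminus B_{t_0}$ we have $r\geq t_0$, so $H_n^h(r)\leq-\la_0$ by the choice of $t_0$, while Cauchy--Schwarz and $|\ovh^h|\leq\la_0$ give $\escpr{\ovh^h,\nabla r}\leq\la_0$. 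As $w(r)>0$, this forces $\Delta^h_P v\leq 0$, i.e. $v$ is $h$-superharmonic, and $v\geq\psi(t_0)$ is bounded from below. Applying $h$-parabolicity to the bounded-above $h$-subharmonic function $-v$, I conclude that $v$ is constant; since $\psi$ is strictly increasing this means $r\equiv t$ on $P$ for some $t\geq t_0$, that is, $P\subeq S_t$.

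For the rigidity I would return to the equality case. Constancy of $v$ gives $\Delta^h_P v=0$, hence $H_n^h(t)+\escpr{\ovh^h,\nabla r}=0$; combined with $H_n^h(t)\leq-\la_0$ and $\escpr{\ovh^h,\nabla r}\leq|\ovh^h|\leq\la_0$, all inequalities must be equalities, so $H_n^h(t)=-\la_0$, $\escpr{\ovh^h,\nabla r}=\la_0=|\ovh^h|$, and the Cauchy--Schwarz equality yields $\ovh^h=\la_0\,\nabla r$. If $P$ is a hypersurface then $\dim P=\dim S_t$, so the immersion into $S_t$ is a local diffeomorphism with open image; properness makes the image closed in the connected compact $S_t$, whence $P=S_t$. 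Finally, if $H_n^h$ is decreasing then by continuity $H_n^h(t_0)=-\la_0$ and $H_n^h(t)<-\la_0$ for $t>t_0$, so $t_0$ is the only admissible radius and $P\subeq S_{t_0}$, with equality in the hypersurface case.

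The main obstacle I anticipate is the correct selection of the radial profile $\psi$ --- recognizing that $\psi'=w$ is exactly what annihilates the $|\nabla_P r|^2$ term in Lemma~\ref{equality-laplace-h}, so that the sign of $\Delta^h_P v$ is governed solely by the balance between $H_n^h$ and the radial component of $\ovh^h$. Once this reduction is in place, the parabolicity of $P$ and the Cauchy--Schwarz squeeze render the remaining steps essentially formal.
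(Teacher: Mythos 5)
Your proposal is correct and follows essentially the same route as the paper: establish $h$-parabolicity via Corollary~\ref{cor:radialcase}, apply Lemma~\ref{equality-laplace-h} to $v=\psi(r)$ with $\psi'=w$ to kill the $|\nabla_P r|^2$ term, and conclude by the Cauchy--Schwarz squeeze in the equality case. The only cosmetic difference is your lower limit $t_0$ in $\psi$ instead of the paper's $0$, which is immaterial here since $o\notin P$.
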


\begin{proof}
A submanifold $P$ in the conditions of the statement is $h$-parabolic. This is clear if $P$ is compact; otherwise it comes from Corollary~\ref{cor:radialcase}. Let $\psi:\rr^+_0\to\rr$ be the function $\psi(t):=\int_0^t w(s)\,ds$. Obviously $\psi\in C^\infty(\rr^+_0)$ and $\psi'(0)=0$. This implies that the function $v:=\psi(r)$ is $C^2$ on $M_w$. Note that $\psi''-H\,\psi'=0$ in $\rr^+$ by \eqref{eq:mcspheres}. From Lemma~\ref{equality-laplace-h} and equation~\eqref{eq:mchspheres2}, we get
\[
\Delta^h_P\,v=\big(H_n^h(r)+\escpr{\ovh^h,\nabla r}\big)\,w(r).
\]
By using the Cauchy-Schwarz inequality and our hypotheses, we deduce that 
\begin{equation}
\label{eq:hoyta}
\Delta^h_P\,v\leq\big(|\ovh^h|-\la_0\big)\,w(r)\leq 0.
\end{equation}
On the other hand $v\geq \psi(t_0)$ on $P$ since $\psi$ is an increasing function. From the $h$-parabolicity of $P$ it follows that $v$ is constant on $P$ and so, $P\subeq S_t$ for some $t\geq t_0$. Since $\Delta^h_P\,v=0$, then all the inequalities in \eqref{eq:hoyta} must be equalities. Hence $H^h_n(t)=-\la_0$ and $\ovh^h=\la_0\,\nabla r$ on $P$. As a consequence $t=t_0$ if $H_n^h$ is decreasing. Finally, when $n=m-1$ we obtain $P=S_t$ since $P$ is properly immersed and $S_t$ is connected, see Remark~\ref{re:details} below. This completes the proof.
\end{proof}

\begin{remark}
In the special case where $H(t)\geq 0$ for any $t>0$, it is clear that $H^h_n(t)\leq H^h(t)$ and so, the hypothesis $H^h(t)\leq-\lambda_0$ for any $t\geq t_0$ gives $H_n^h(t)\leq-\la_0$ for any $t\geq t_0$.
\end{remark}

\begin{remark}
\label{re:details}
Let $P^n$ be a submanifold properly immersed in a model space $M^m_w$. If  $P\subeq P'$ for some connected $n$-dimensional submanifold $P'$ of $M_w$, then $P=P'$. To see this, note that $P$ is an open subset of $P'$ since it is an immersed submanifold of $P'$ with the same dimension. On the other hand, the properness of $P$ and the completeness of $M_w$ guarantee that $P$ is a closed subset of $M_w$. Hence, the connectivity of $P'$ allows to conclude that $P=P'$.
\end{remark}

For submanifolds inside a closed ball $\overline{B}_t$ we can deduce a similar result. In this case the properness of the submanifold implies compactness, so that we do not need assumptions on the model space $M^m_w$ nor on the weight $e^h$ to ensure parabolicity. By following the proof of the previous theorem we obtain this statement.

\begin{theorem}
\label{th:ball2}
Consider a radial weight $e^h$ in a model space $M^m_w$. For any number $\la_0\geq 0$ we choose $t_0>0$ such that $H_n^h(t)\geq\la_0$ for any $t\leq t_0$. If $P^n$ is a submanifold properly immersed in $M_w$ with $P\sub\overline{B}_{t_0}$ and $|\ovh^h|\leq\la_0$, then $P\subeq S_t$ for some $t\leq t_0$ such that $H_n^h(t)=\la_0$, and $\ovh^h=-\la_0\,\nabla r$ on $P$. If $P$ is a hypersurface then $P=S_t$. Moreover, if $H_n^h$ is decreasing and $t_0>0$ is the unique number for which $H_n^h(t_0)=\la_0$, then $P\subeq S_{t_0}$, and we have equality $P=S_{t_0}$ when $P$ is a hypersurface. 
\end{theorem}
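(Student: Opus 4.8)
The plan is to mirror the argument proving Theorem~\ref{th:ball1}, now exploiting that containment in the compact set $\overline{B}_{t_0}$ forces $P$ to be compact. Since $M_w$ is complete, the closed ball $\overline{B}_{t_0}$ is compact, and as $P$ is properly immersed with $P\sub\overline{B}_{t_0}$, it is a closed subset of a compact set, hence compact. Therefore $P$ is automatically $h$-parabolic, so in contrast with Theorem~\ref{th:ball1} I do not need any hypothesis on $w$, on the weight, nor Corollary~\ref{cor:radialcase} to guarantee parabolicity.

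The key auxiliary function is again the radial function $v:=\psi(r)$, where $\psi(t):=\int_0^tw(s)\,ds$. As in the previous proof, $\psi\in C^\infty(\rr^+_0)$ with $\psi'(0)=0$, so that $v\in C^2(M_w)$, and the choice $\psi'=w$, $\psi''=w'$ gives $\psi''-H\psi'=0$ by \eqref{eq:mcspheres}. Plugging this into Lemma~\ref{equality-laplace-h} and using that $h=f(r)$ is radial (so $\escpr{\nabla h,\nabla r}=f'(r)$) together with the identity $H_n^h(t)=nH(t)+f'(t)$ coming from \eqref{eq:mchspheres2}, the $|\nabla_Pr|^2$ term drops out and I obtain
\[
\Delta^h_P\,v=\big(H_n^h(r)+\escpr{\ovh^h,\nabla r}\big)\,w(r).
\]
The essential difference with Theorem~\ref{th:ball1} is the reversal of all the inequalities. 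Here $r\leq t_0$ on $P$, so the hypothesis gives $H_n^h(r)\geq\la_0$; combined with the Cauchy-Schwarz bound $\escpr{\ovh^h,\nabla r}\geq-|\ovh^h|\geq-\la_0$ and $w(r)>0$, this yields $\Delta^h_P\,v\geq0$. Thus $v$ is $h$-subharmonic on $P$.

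Next I would invoke the maximum principle: an $h$-subharmonic function on the compact (hence $h$-parabolic) manifold $P$ is constant. Since $\psi$ is strictly increasing ($w>0$ on $\rr^+$), constancy of $v$ forces $r$ to be constant on $P$, so $P\subeq S_t$ for some $t\leq t_0$. Because $\Delta^h_P\,v\equiv0$, every inequality in the estimate above must be an equality: this gives $H_n^h(t)=\la_0$ and forces the vectors $\ovh^h$ and $\nabla r$ to be antiparallel with $|\ovh^h|=\la_0$, i.e.\ $\ovh^h=-\la_0\,\nabla r$ on $P$. If $P$ is a hypersurface ($n=m-1$), then $P\subeq S_t$ together with properness and Remark~\ref{re:details} yields $P=S_t$. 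Finally, when $H_n^h$ is decreasing and $t_0$ is the unique solution of $H_n^h(t_0)=\la_0$, the equality $H_n^h(t)=\la_0$ forces $t=t_0$, giving $P\subeq S_{t_0}$ and $P=S_{t_0}$ in the hypersurface case.

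I expect essentially no analytic obstacle beyond Theorem~\ref{th:ball1}; the argument is in fact simpler, because compactness supplies parabolicity for free and the maximum principle applies directly without any boundedness or growth control. The only points that require care are bookkeeping ones: verifying that the reversed sign hypothesis $H_n^h(r)\geq\la_0$ is exactly what is available on $P$ since $r\leq t_0$ there, and correctly extracting the equality case $\ovh^h=-\la_0\,\nabla r$ (rather than $+\la_0\,\nabla r$ as in Theorem~\ref{th:ball1}) from saturation of the Cauchy-Schwarz inequality in the opposite direction.
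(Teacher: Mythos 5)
Your proposal is correct and follows exactly the route the paper intends: the paper's own ``proof'' of Theorem~\ref{th:ball2} is just the remark that properness inside $\overline{B}_{t_0}$ forces compactness (hence $h$-parabolicity with no hypotheses on $w$ or $f$) and that one then repeats the argument of Theorem~\ref{th:ball1} with the function $v=\psi(r)$ and all inequalities reversed. Your handling of the equality case (saturation of Cauchy--Schwarz giving $\ovh^h=-\la_0\,\nabla r$, and uniqueness of $t_0$ when $H_n^h$ is decreasing) matches the paper's treatment in Theorem~\ref{th:ball1}.
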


The previous theorems lead to the following corollary for $h$-minimal submanifolds which is interesting in itself.

\begin{corollary}
\label{cor:ball}
Let $M^m_w$ be a model space such that $w\notin L^1(0,\infty)$ and $H$ is bounded at infinity. Consider a weight $e^h$ with $h:=f(r)$ and $f'(t)\to-\infty$ when $t\to\infty$. Suppose that $H_n^h$ is decreasing, and let $t_0>0$ be the unique number such that $H^h_n(t_0)=0$. If $P^n$ is an $h$-minimal submanifold properly immersed in $M_w$ with $P\sub\overline{B}_{t_0}$ or $P\sub M_w\setminus B_{t_0}$, then $P\subeq S_{t_0}$. Moreover, in the case $n=m-1$ then $P=S_{t_0}$.  
\end{corollary}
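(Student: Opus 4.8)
The plan is to deduce this corollary directly from Theorems~\ref{th:ball1} and \ref{th:ball2} by specializing to the parameter value $\la_0 = 0$. Since $P$ is $h$-minimal we have $\ovh^h = 0$ on $P$, hence $|\ovh^h| = 0 \leq 0 = \la_0$, so the curvature bound demanded by both theorems is satisfied with $\la_0 = 0$.

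The crucial remark is that the hypothesis that $H_n^h$ is decreasing, together with $t_0$ being its unique zero, simultaneously produces the sign conditions required by the two theorems. Indeed, a decreasing function with $H_n^h(t_0) = 0$ satisfies $H_n^h(t) \leq 0$ for every $t \geq t_0$ (and $t_0$ is the first such point), which is exactly the assumption of Theorem~\ref{th:ball1} with $\la_0 = 0$; and it satisfies $H_n^h(t) \geq 0$ for every $t \leq t_0$, which is exactly the assumption of Theorem~\ref{th:ball2} with $\la_0 = 0$.

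I would then split into the two cases. If $P \sub M_w \setminus B_{t_0}$, I would apply Theorem~\ref{th:ball1}: the hypotheses $w \notin L^1(0,\infty)$, $H$ bounded at infinity, and $f'(t) \to -\infty$ are inherited verbatim from the corollary, so the theorem applies and its ``moreover'' clause (valid because $H_n^h$ is decreasing) gives $P \subeq S_{t_0}$, with equality $P = S_{t_0}$ when $n = m-1$. If instead $P \sub \overline{B}_{t_0}$, I would note that the properness of the immersion together with the compactness of $\overline{B}_{t_0}$ forces $P$ to be compact, so that Theorem~\ref{th:ball2} applies with no further assumptions on $w$, $H$ or $f'$; its ``moreover'' clause, which requires precisely that $H_n^h$ be decreasing with unique zero $t_0$, again yields $P \subeq S_{t_0}$, with equality in the hypersurface case.

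There is no serious obstacle here: the analytic content is entirely contained in Theorems~\ref{th:ball1} and \ref{th:ball2}, and the corollary merely combines them under the convenient monotonicity assumption. The sole point to verify carefully is that the number $t_0$ of the corollary (the unique zero of $H_n^h$) coincides with the numbers $t_0$ appearing in the hypotheses of the two theorems; the decreasing character of $H_n^h$ makes this identification immediate in both cases.
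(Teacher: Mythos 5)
Your proposal is correct and is exactly the argument the paper intends: the corollary is stated as a direct consequence of Theorems~\ref{th:ball1} and \ref{th:ball2}, obtained by taking $\la_0=0$ and using the monotonicity of $H^h_n$ to identify the critical radii of both theorems with the unique zero $t_0$. Your observation that properness inside $\overline{B}_{t_0}$ forces compactness (so no hypotheses on $w$, $H$ or $f'$ are needed in that case) matches the remark the paper makes before Theorem~\ref{th:ball2}.
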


\begin{example}
The previous results apply in $\rr^m$, $\mathbb{H}^m(\kappa)$, and convex paraboloids of revolution in $\rr^{m+1}$, with a weight $e^{f(r)}$ such that $f$ is concave and $f'(t)\to-\infty$ when $t\to\infty$. This is the case of $f(t):=a\,t^k$ where $a<0$ and $k>1$, which includes the Gaussian weight $e^{-r^2/2}$. From the calculus in Example~\ref{ex:critical} the critical ball for $n$-dimensional self-shrinkers in $\rr^m$ is $B_{\sqrt{n}}$. We remark that properly immersed self-shrinker hypersurfaces inside some Euclidean balls were described by Vieira and Zhou~\cite[Thm.~1]{vieira-zhou}. Geometric restrictions for complete self-shrinker hypersurfaces in a ball $\overline{B}_t\sub\rr^{m}$ were given by Pigola and Rimoldi~\cite[Thm.~1]{pigola-rimoldi} by controlling the second fundamental form. Recently Gimeno and the second author~\cite[Thm.~6.1, Cor.~6.2]{gimeno-palmer-preprint} have obtained Theorem~\ref{th:ball2} for a complete self-shrinker $P$ of any codimension which is parabolic in the classical (Riemannian) sense. As an interesting fact, they have also shown that $P$ is contained in the corresponding sphere $S_t$ as a minimal submanifold.
\end{example}

Theorem~\ref{th:ball2} also implies non-existence of compact minimal submanifolds with respect to some radial weights. In this direction we get the next corollary generalizing the fact that a self-expander without boundary in $\rr^m$ cannot be compact \cite[Prop.~5.3]{cao-li}.

\begin{corollary}
\label{cor:non-compact}
There are no compact weighted minimal $n$-dimensional submanifolds in $M^m_w$ with respect to a radial weight $e^h$ such that $H_n^h(t)>0$ for any $t>0$. 
\end{corollary}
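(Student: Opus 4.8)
The plan is to argue by contradiction, transplanting to $P$ the same radial function used in the proof of Theorem~\ref{th:ball1}. Suppose there were a compact $h$-minimal $n$-dimensional submanifold $P$ immersed in $M^m_w$ (note that compactness makes $P$ automatically properly immersed and, since $\ptl P=\emptyset$, closed without boundary). I would consider the function $v:=\psi(r)$ on $P$, where $\psi(t):=\int_0^t w(s)\,ds$. As observed in the proof of Theorem~\ref{th:ball1}, the conditions $\psi\in C^\infty(\rr^+_0)$ and $\psi'(0)=0$ guarantee that $v\in C^2(M_w)$, and the identity $\psi''-H\,\psi'=0$ holds on $\rr^+$ by \eqref{eq:mcspheres}.

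Next I would compute $\Delta^h_P\,v$ via Lemma~\ref{equality-laplace-h}. Since $P$ is $h$-minimal we have $\ovh^h=0$, and since the weight is radial we have $\escpr{\nabla h,\nabla r}=f'(r)$. Substituting $\psi'=w$ and using $\psi''-H\,\psi'=0$, every term of the lemma except the middle one collapses, and I expect to obtain on $P\setminus\{o\}$ the identity
\[
\Delta^h_P\,v=\big(nH(r)+f'(r)\big)\,w(r)=H_n^h(r)\,w(r).
\]
By hypothesis $H_n^h(t)>0$ for all $t>0$, and $w>0$ on $\rr^+$, so this quantity is strictly positive on $P\setminus\{o\}$; that is, $v$ is strictly $h$-subharmonic away from the pole.

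To close the argument I would exploit the compactness of $P$. The cleanest route is to integrate: since $\Delta^h_P\,v=e^{-h}\,\divv_P(e^h\,\nabla_P v)$ and $P$ is compact with empty boundary, the divergence theorem yields $\int_P\Delta^h_P\,v\,dv_h=0$, which contradicts the strict positivity of $\Delta^h_P\,v$ on $P\setminus\{o\}$, a set of full measure in $P$. Equivalently, one may invoke the maximum principle: by continuity of $\Delta^h_P\,v$ across the single point $o$ one has $\Delta^h_P\,v\geq 0$ on all of $P$, so Theorem~\ref{th:mp}(i) forces $v$ to be constant on the compact manifold $P$, whence $\Delta^h_P\,v\equiv 0$, again contradicting $\Delta^h_P\,v>0$ on $P\setminus\{o\}$. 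The only delicate point I anticipate is the behaviour at the pole, but it presents no genuine obstacle: $\{o\}$ is a single point, irrelevant for the integral version and absorbed by continuity in the maximum-principle version. The corollary is thus essentially a one-line consequence of Lemma~\ref{equality-laplace-h} once the function $v$ is chosen.
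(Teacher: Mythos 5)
Your proof is correct and takes essentially the same route as the paper: the corollary is there deduced from Theorem~\ref{th:ball2} with $\la_0=0$, whose proof rests on precisely your computation $\Delta^h_P\,\psi(r)=H^h_n(r)\,w(r)$ for $\psi(t)=\int_0^t w(s)\,ds$ together with the constancy of $h$-subharmonic functions on a compact manifold. Your treatment of the pole (continuity of $\Delta^h_P v$ there, or its negligibility in the integral version) is sound, so no gap remains.
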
 

By combining Corollary~\ref{cor:radial2} with previous computations, we can give some consequences for $h$-minimal submanifolds in $M^m_w$ with weight $w(r)^k$.

\begin{proposition}
\label{prop:ball3}
Consider a model space $M^m_w$ where the function $H$ in \eqref{eq:mcspheres} satisfies $H(t)\geq 0$ for any $t>0$. Let $P^n$ be a submanifold properly immersed in $M_w$ which is weighted minimal with respect to the weight $w(r)^k$ in $M_w\setminus\{0\}$. Suppose that:
\begin{itemize}
\item[(i)] $k<-n$ and $P\sub M_w\setminus B_{t_0}$, for some $t_0>0$, or
\item[(ii)] $k>-n$ and $P$ is compact with $o\notin P$.
\end{itemize}
Then, $P\subeq S_t$ for some $t>0$ with $H(t)=0$. Moreover, if $P$ is a hypersurface, then $P=S_t$. Hence, if $H>0$, then there are no weighted minimal submanifolds in any of the previous cases.
\end{proposition}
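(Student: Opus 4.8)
The plan is to mimic the proofs of Theorems~\ref{th:ball1} and \ref{th:ball2}, using the radial primitive of the warping function as an auxiliary test function and feeding in the parabolicity supplied by Corollary~\ref{cor:radial2} or by compactness. First I would record that the logarithm of the weight $w(r)^k$ is the radial function $h=f(r)$ with $f(t):=k\,\log(w(t))$, so that $f'(t)=k\,w'(t)/w(t)=k\,H(t)$ by \eqref{eq:mcspheres}. Since $\nabla h=f'(r)\,\nabla r$ and $|\nabla r|=1$, this gives $\escpr{\nabla h,\nabla r}=k\,H(r)$ on $M_w\setminus\{o\}$.

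Next I would introduce $v:=\psi(r)$, where $\psi(t):=\int_0^t w(s)\,ds$, exactly as in the proof of Theorem~\ref{th:ball1}; note that $\psi$ is increasing and $v$ is $C^2$, while $r>0$ on $P$ in both cases. Because $\psi'=w$ and $\psi''=w'$, equation \eqref{eq:mcspheres} yields $\psi''-H\,\psi'=0$, so the first summand in Lemma~\ref{equality-laplace-h} vanishes. Using the $h$-minimality of $P$ (so that $\escpr{\ovh^h,\nabla r}=0$) together with the previous computation of $\escpr{\nabla h,\nabla r}$, Lemma~\ref{equality-laplace-h} collapses to the clean identity
\[
\Delta^h_P\,v=\big(nH(r)+kH(r)\big)\,w(r)=(n+k)\,H(r)\,w(r).
\]
The crux is that the sign of $\Delta^h_P v$ is dictated solely by the sign of $n+k$, since $H\geq 0$ and $w>0$.

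With this identity the two cases split cleanly. In case (i) one has $k<-n$, hence $n+k<0$ and $\Delta^h_P v\leq 0$, so $-v$ is $h$-subharmonic; moreover $P\sub M_w\setminus B_{t_0}$ forces $r\geq t_0$, whence $v\geq\psi(t_0)$ and $-v$ is bounded from above. Parabolicity of $P$ (immediate if $P$ is compact, and otherwise furnished by Corollary~\ref{cor:radial2}(i), since $k<-n$ gives $k\leq-n$) then forces $v$, and hence $r$, to be constant. In case (ii) one has $k>-n$, hence $n+k>0$ and $\Delta^h_P v\geq 0$, so $v$ is $h$-subharmonic on the compact manifold $P$; the maximum principle of Theorem~\ref{th:mp} makes $v$ constant. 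In either case $P\subeq S_t$ for some $t>0$ (with $t>0$ guaranteed by $P\sub M_w\setminus B_{t_0}$ in (i) and by $o\notin P$ in (ii)). Since $v$ is constant we have $\Delta^h_P v=0$, and because $n+k\neq 0$ and $w(t)>0$ the displayed identity forces $H(t)=0$; this is exactly where the strict inequalities $k<-n$ and $k>-n$ are used. The hypersurface conclusion $P=S_t$ follows from Remark~\ref{re:details} as in Theorem~\ref{th:ball1}, and the non-existence statement when $H>0$ is immediate, since then no radius $t$ with $H(t)=0$ exists.

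I do not anticipate a serious obstacle: the only delicate points are matching the direction of the inequality (superharmonic versus subharmonic) to the correct form of the Liouville property, and verifying in each case both the appropriate boundedness of $v$ and the availability of parabolicity. These become routine once the identity for $\Delta^h_P v$ is in hand.
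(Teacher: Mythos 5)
Your proposal is correct and follows essentially the same route as the paper: the same test function $v=\psi(r)$ with $\psi(t)=\int_0^t w(s)\,ds$, the same identity $\Delta^h_P v=(n+k)\,H(r)\,w(r)$ (the paper writes it as $H^h_n(r)\,w(r)$), parabolicity from Corollary~\ref{cor:radial2} (or compactness), and the same sign analysis to conclude that $v$ is constant and $H(t)=0$. The only cosmetic difference is that in case (ii) you invoke the maximum principle on the compact manifold directly, while the paper phrases it via the Liouville property; these are equivalent here.
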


\begin{proof}
By Corollary~\ref{cor:radial2} a submanifold $P$ in the conditions of the statement is weighted parabolic. As in the proof of Theorem~\ref{th:ball1}, the function $v:=\psi(r)$ where $\psi(t):=\int_0^tw(s)\,ds$ satisfies
\[
\Delta^h_P\,v=H^h_n(r)\,w(r)=(n+k)\,H(r)\,w(r).
\]
Our hypotheses imply that, either $\Delta^h_P\,v\leq 0$ and $v$ is bounded from below, or $\Delta^h_P\,v\geq 0$ and $v$ is bounded from above. We conclude that $v$ is constant on $P$ and so, $P\subeq S_t$ for some $t>0$. Moreover, we have $P=S_t$ when $P$ is a hypersurface. Finally, since $\Delta^h_P\,v=0$ then $H(t)=0$.
\end{proof}

\begin{remark}
At first, we cannot exclude the existence of compact $h$-minimal submanifolds meeting the pole. In \cite[Ex.~4.4]{homostable} it is shown that round spheres passing through the origin are weighted minimal hypersurfaces in $\rr^m$ for the weight $r^{2-2m}$. 
\end{remark}

In the extremal case $k=-n$ we have $\Delta^h_P\,v=H^h_n(r)=0$, and we can deduce a corollary which, in the particular case of the radial homogeneous weight $r^{1-m}$ in $\rr^m$, improves a previous result for hypersurfaces of Ca\~nete and the third author \cite[Thm.~6.4]{homostable}.

\begin{corollary}
\label{cor:ball5}
Consider the weight $w(r)^{-n}$ in a model space $M^m_w$ such that $H(t)\geq 0$ for any $t>0$. Then, a weighted minimal submanifold $P$ properly immersed in $M_w$ such that, either $P\sub M_w\setminus B_{t_0}$ for some $t_0>0$, or $P$ is compact with $o\notin P$, must be contained in a metric sphere $S_t$. Moreover, if $n=m-1$ then $P=S_t$.
\end{corollary}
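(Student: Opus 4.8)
The plan is to specialize the argument of Proposition~\ref{prop:ball3} (which itself follows Theorem~\ref{th:ball1}) to the borderline exponent $k=-n$, where an exact cancellation makes the relevant radial function $h$-harmonic rather than merely sub- or superharmonic. First I would record the basic identity for the weight: writing $h=f(r)$ with $f(t)=-n\log w(t)$, equation~\eqref{eq:mcspheres} gives $f'(t)=-n\,H(t)$, so that the model $h$-mean curvature function of \eqref{eq:mchspheres2} satisfies $H^h_n(t)=n\,H(t)+f'(t)=0$ for every $t>0$. This vanishing is the whole point of the case $k=-n$.

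Next I would establish the $h$-parabolicity of $P$. In the compact case this is automatic, since every compact submanifold is $h$-parabolic. In the non-compact case the inclusion $P\sub M_w\setminus B_{t_0}$ forces $o\notin P$, and since $H(t)\geq 0$ for all $t>0$ the spheres $S_t$ are convex at infinity; hence Corollary~\ref{cor:radial2}(i), applied with the borderline exponent $k=-n$, yields that $P$ is weighted parabolic.

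The core computation is to introduce $v:=\psi(r)$ with $\psi(t):=\int_0^t w(s)\,ds$. As in the proof of Theorem~\ref{th:ball1}, $\psi\in C^\infty(\rr^+_0)$ with $\psi'(0)=0$, so $v\in C^2(M_w)$, and \eqref{eq:mcspheres} gives $\psi''-H\,\psi'=0$. Applying Lemma~\ref{equality-laplace-h}, the first term drops out, while in the second term the $h$-minimality $\ovh^h=0$ kills $\escpr{\ovh^h,\nabla r}$ and the identity $\escpr{\nabla h,\nabla r}=f'(r)=-n\,H(r)$ cancels the $n\,H(r)$ contribution, so that
\[
\Delta^h_P\,v=\big(H^h_n(r)+\escpr{\ovh^h,\nabla r}\big)\,w(r)=0.
\]
Thus $v$ is $h$-harmonic on $P$. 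Since $\psi'=w>0$ on $\rr^+$, the function $\psi$ is strictly increasing, whence $v$ is bounded below by $\psi(t_0)$ in the non-compact case (where $r\geq t_0$) and bounded in the compact case. Because $\Delta^h_P v=0$, the function $-v$ (resp. $v$) is $h$-subharmonic and bounded from above, so the Liouville property of $h$-parabolic manifolds forces $v$ to be constant. As $\psi$ is injective, $r$ is constant on $P$, i.e.\ $P\subeq S_t$ for some $t>0$. When $n=m-1$, the submanifold $P$ is then a properly immersed hypersurface contained in the connected equidimensional hypersurface $S_t$, and Remark~\ref{re:details} (openness of $P$ in $S_t$, closedness by properness and completeness, then connectedness of $S_t$) gives $P=S_t$.

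I do not anticipate a genuine obstacle: the statement is a clean degeneration of Proposition~\ref{prop:ball3}. The one point deserving care is the parabolicity input in the non-compact case, where one must check that $o\notin P$ and that the global convexity $H\geq 0$ places us in the scope of Corollary~\ref{cor:radial2}(i); the rest is the exact cancellation $H^h_n\equiv 0$, which upgrades $v$ from sub/superharmonic to harmonic and thereby removes any need to track the sign of the factor $(n+k)\,H(r)$ that governs the argument for $k\neq-n$.
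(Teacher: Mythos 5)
Your proposal is correct and follows essentially the same route as the paper: parabolicity via Corollary~\ref{cor:radial2}(i) (or compactness), the exact cancellation $H^h_n\equiv 0$ making $v=\psi(r)$ $h$-harmonic, the Liouville property applied to $\pm v$, and Remark~\ref{re:details} for the hypersurface case. No gaps.
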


\subsection{Cylinder results}
\label{subsec:cylinder}
\noindent

In this section we follow an analysis similar to that in Section~\ref{subsec:ball} to study submanifolds of bounded weighted mean curvature inside or outside right solid cylinders in Euclidean space. We are interested in weights for which these cylinders have constant weighted mean curvature.

For $m\geq 3$ and $2\leq k\leq m-1$ we identify $\rr^m$ with $\rr^k\times\rr^{m-k}$. For any $t>0$ we consider the cylinder $\mathcal{C}_t:=S_t\times\rr^{m-k}$, where $S_t$ denotes the sphere about the origin in $\rr^k$ of radius $t$. The corresponding solid cylinder is $B_t\times\rr^{m-k}$, where $B_t$ is the open ball in $\rr^k$ bounded by $S_t$. Along the hypersurface $\cil_t$ we choose the unit normal $N(x,y):=(-x/t,0)$, where $(x,y)$ are the components of a point in $\rr^k\times\rr^{m-k}$. The Euclidean mean curvature of $\cil_t$ is
\[
H_{c}(t)=\frac{k-1}{(m-1)\,t}.
\]
According to \eqref{eq:fmc}, for a given weight $e^h$ in $\rr^m$, the $h$-mean curvature of $\cil_t$ is
\begin{equation}
\label{eq:hmccil2}
H^h_{c}(t)=\frac{k-1}{t}+\frac{1}{t}\,\escpr{\nabla h,X},
\end{equation}
where $X(x,y):=(x,0)$. 

For a radial weight $e^h$ with $h=f(r)$ we obtain
\[
H^h_{c}(t)=\frac{k-1}{t}+\frac{f'(r)}{r}\,t,
\]
which is constant on $\cil_t$ if and only if there are $b,c\in\rr$ such that $f(s)=b\,s^2+c$ for any $s\geq t$. For instance, in the Gaussian case $f(s)=-s^2/2$, we have $H^h_{c}(t)=(k-1)/t-t$. Thus, the unique $h$-minimal cylinder in this example is $\cil_{\sqrt{k-1}}$.

Now, we take a Gaussian perturbation $e^h$ with $h(x,y):=-r^2/2+\xi(d)+\rho(y)$, where $\xi$ and $\rho$ are $C^1$ functions with $\xi'(0)=0$, and $d$ is the horizontal norm $d(x,y):=|x|$. Then, we get
\begin{equation}
\label{eq:hmccil}
H^h_{c}(t)=\frac{k-1}{t}-t+\xi'(t),
\end{equation}
so that $\cil_t$ has constant $h$-mean curvature. Clearly $H^h_{c}(t)\to\infty$ when $t\to 0$. Moreover, if $\xi'$ is bounded from above, then $H^h_{c}(t)\to-\infty$ when $t\to\infty$. In this situation, for any $\la_0\in\rr$, there is $t_0>0$ such that $H^h_{c}(t_0)=\la_0$. Note that $H^h_{c}(t)$ is decreasing when $\xi$ is $C^2$ and concave, so that $t_0$ would be unique in that case. In particular, there would be a unique $h$-minimal cylinder $\cil_t$. 

Now, we can prove the main result of this section. The function $H^h_{c,n}(t)$ in the statement is defined as in \eqref{eq:hmccil} by replacing $k-1$ with $n$.

\begin{theorem}
\label{th:cylinder}
In $\rr^m=\rr^k\times\rr^{m-k}$ we consider a weight $e^h$, where $h(x,y):=-r^2/2+\xi(d)+\rho(y)$ for some $C^1$ functions $\xi$ and $\rho$ with $\xi'(0)=0$. Suppose that there are constants $\delta_1,\delta_2\geq 0$ such that $\xi'\leq\delta_1$ and $\escpr{(\nabla\rho)(y),y}\leq\delta_2\,|y|$ for any $y\in\rr^{m-k}$. 
\begin{itemize}
\item[(i)] For a fixed $\la_0\geq 0$, let $t_0>0$ be the first number such that  $H^h_{c,n}(t)\leq-\la_0$ $($resp. $H^h_c(t)\leq-\la_0$$)$ for any $t\geq t_0$. If $P^n$ is a submanifold properly immersed in $\rr^m$ with $|\ovh^h|\leq\la_0$ and $P\sub\rr^m\setminus\big(B_{t_0}\times\rr^{m-k}\big)$, then $P\subeq\cil_{t}$ for some $t\geq t_0$ such that $H^h_{c,n}(t)=-\la_0$ $($resp. $H^h_{c}(t)=-\la_0$$)$, and $\ovh^h=(\la_0/t)\,X$ on $P$. Moreover, if $P$ is a hypersurface, then $P=\cil_t$.
\item[(ii)] For a fixed $\la_0\geq 0$, let $t_0>0$ be the last number such that $H^h_{c}(t)\geq \la_0$ for any $t\leq t_0$. If $P$ is a hypersurface properly immersed in $\rr^m$ with $|\ovh^h|\leq\la_0$ and $P\sub\overline{B}_{t_0}\times\rr^{m-k}$, then $P=\cil_{t}$ for some $t\leq t_0$ where $H^h_{c}(t)=\la_0$.
\end{itemize}
\end{theorem}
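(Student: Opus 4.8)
The plan is to adapt the strategy of Theorems~\ref{th:ball1} and~\ref{th:ball2}, with the radial distance $r$ replaced by the horizontal norm $d$ and the metric spheres $S_t$ replaced by the cylinders $\cil_t$. First I would check that $P$ is $h$-parabolic. Writing $h=f(r)+g$ with $f(r):=-r^2/2$ and $g:=\xi(d)+\rho(y)$, one has $f'(t)=-t\to-\infty$, while the hypotheses $\xi'\leq\delta_1$ and $\escpr{(\nabla\rho)(y),y}\leq\delta_2\,|y|$, together with $d\leq r$ and $|y|\leq r$, give $\escpr{\nabla g,\nabla r}\leq\delta_1+\delta_2$ on $P\setminus\{o\}$. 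Since $\rr^m$ is a model with $w(t)=t\notin L^1(0,\infty)$ and bounded $H$, Corollary~\ref{cor:parabolicity} yields that $P$ is $h$-parabolic (when $P$ is compact this is automatic).

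The core computation is the weighted Laplacian on $P$ of the functions $d$ and $d^2/2$. Exactly as in the proof of Lemma~\ref{equality-laplace-h}, for the restriction to $P$ of an ambient function $v$ one gets $\Delta^h_P v=\text{tr}_{T_pP}(\text{Hess}\,v)+\escpr{\ovh^h,\nabla v}+\escpr{\nabla h,\nabla v}$, where \eqref{eq:divnorm} and \eqref{eq:mcvector} are used as there. Let $\pi$ be the orthogonal projection of $\rr^m=\rr^k\times\rr^{m-k}$ onto the horizontal factor, so that $\nabla d=X/d$ with $X(x,y)=(x,0)$, $\text{Hess}(d^2/2)=\pi$ and $\text{Hess}\,d=(\pi-\nabla d\otimes\nabla d)/d$. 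Because $\nabla d$ is horizontal and $h=-d^2/2+\xi(d)-|y|^2/2+\rho(y)$, a direct check gives $\escpr{\nabla h,\nabla d}=\xi'(d)-d$ (the term $\rho$ drops out). Setting $\tau:=\text{tr}_{T_pP}\pi$ and $\sigma:=\tau-|\nabla_P d|^2$, this produces the two master formulas $\Delta^h_P(d^2/2)=\tau+d\,\xi'(d)-d^2+\escpr{\ovh^h,X}$ and $\Delta^h_P d=\sigma/d+\xi'(d)-d+\escpr{\ovh^h,\nabla d}$.

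The crux, and the main obstacle, is that, unlike $\text{Hess}\,r$ in \eqref{eq:hessr}, the Hessian of the horizontal norm only sees the $\rr^k$ directions, so the trace terms $\tau$ and $\sigma$ are not determined by $\dim P$ alone and everything hinges on bounding them. Three bounds are available: $0\leq\tau\leq n$ for any submanifold; $0\leq\sigma\leq k-1$, since $\sigma$ is the trace on $T_pP$ of the projection onto the $(k-1)$-dimensional sphere-tangent horizontal directions; and, crucially, $k-1\leq\tau\leq k$ for a hypersurface, because there $\tau=k-|\pi N|^2$ with $N$ a unit normal. Recalling that $t\,H^h_{c,n}(t)=n-t^2+t\,\xi'(t)$ and $t\,H^h_c(t)=(k-1)-t^2+t\,\xi'(t)$, and estimating $\escpr{\ovh^h,X}$ and $\escpr{\ovh^h,\nabla d}$ by $\pm\la_0 d$ and $\pm\la_0$ via Cauchy--Schwarz, the master formulas become the clean inequalities needed below.

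For part (i), $P$ lies outside the solid cylinder, so $d\geq t_0>0$ on $P$ and both functions are smooth there. Using $\tau\leq n$ in the first master formula gives $\Delta^h_P(d^2/2)\leq d\,(H^h_{c,n}(d)+\la_0)\leq 0$; using $\sigma\leq k-1$ in the second gives $\Delta^h_P d\leq H^h_c(d)+\la_0\leq 0$. In either case $v$ is $h$-superharmonic and bounded from below, so $h$-parabolicity forces $v$ to be constant and hence $P\subseteq\cil_t$ for some $t\geq t_0$. Then $\Delta^h_P v\equiv 0$ makes every intermediate inequality an equality, yielding $H^h_{c,n}(t)=-\la_0$ (resp. $H^h_c(t)=-\la_0$) together with $\escpr{\ovh^h,X}=\la_0 t$, i.e. $\ovh^h=(\la_0/t)\,X$ by the equality case of Cauchy--Schwarz; when $P$ is a hypersurface, Remark~\ref{re:details} upgrades $P\subseteq\cil_t$ to $P=\cil_t$. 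For part (ii), $P$ is a hypersurface inside the solid cylinder, so $d\leq t_0$ and $v:=d^2/2$ is bounded from above and globally smooth even on the axis; now the hypersurface bound $\tau\geq k-1$ reverses the estimate to $\Delta^h_P(d^2/2)\geq d\,(H^h_c(d)-\la_0)\geq 0$, so $v$ is $h$-subharmonic and bounded from above, and the same parabolicity and equality argument gives $P=\cil_t$ with $H^h_c(t)=\la_0$.
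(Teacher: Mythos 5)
Your proposal is correct and follows essentially the same route as the paper: parabolicity via Corollary~\ref{cor:parabolicity}, then the Liouville property applied to $d^2/2$ (and to $d$), with your trace bounds $\tau\leq n$, $\sigma\leq k-1$ and $\tau\geq k-1$ playing exactly the role of the paper's identity $k=\sum_{i}|e_i^\ell|^2+\sum_{i}|N_i^\ell|^2$. The only cosmetic difference is that you compute $\Delta^h_P d$ directly from $\text{Hess}\,d$ instead of via the chain rule on $\sqrt{2v}$.
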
 

\begin{proof}
First we see that, for a weight $e^h$ as in the statement, any submanifold $P^n$ properly immersed in $\rr^m$ with bounded $h$-mean curvature is $h$-parabolic. For this we will apply a previous parabolicity criteria for weights $e^h$ where $h=f(r)+g$. If we define $g(x,y):=\xi(d)+\rho(y)$ then $g\in C^1(\rr^m)$ and $(\nabla g)(x,y)=\big(\xi'(d)\,x/d,(\nabla\rho)(y)\big)$. As a consequence, we have the equality
\[
\escpr{\nabla g,\nabla r}=\frac{1}{r}\,\left\{\xi'(d)\,d+\escpr{(\nabla\rho)(y),y}\right\},\quad\text{in } \rr^m\setminus\{0\}.
\]
It is clear that $\escpr{\nabla g,\nabla r}\leq\delta_1+\delta_2$ on $\rr^m\setminus\{0\}$. This allows us to invoke Corollary~\ref{cor:parabolicity}, which entails the $h$-parabolicity of $P$.

Now, we define the smooth function $v:\rr^m\to\rr$ by $v(x,y):=|x|^2/2=d(x,y)^2/2$. We take any submanifold $P^n\sub\rr^m$ and compute $\Delta^h_P\,v$. Note that $\nabla v=X$, and so $\nabla_P\,v=X^\top$, where $X(x,y):=(x,0)$. Thus, we get
\[
\Delta_P\,v=\divv_P(X-X^\bot)=\divv_PX+\escpr{n\ovh,X},
\]
where $\overline{H}_P$ is the mean curvature vector and we have used \eqref{eq:divnorm}. According to \eqref{weighted-Laplacian} and \eqref{eq:mcvector}, it follows that
\[
\Delta^h_P\,v=\divv_PX+\escpr{n\ovh,X}+\escpr{\nabla h,\nabla_P\,v}=\divv_PX+\escpr{\nabla h,X}+\escpr{\ovh^h,X}.
\]
On the other hand, observe that
\[
\divv_PX=\sum_{i=1}^n\escpr{D_{e_i}X,e_i}=\sum_{i=1}^n|e_i^\ell|^2,
\]
where $\{e_1,\ldots,e_n\}$ is an orthonormal basis of tangent vectors to $P$, and $z^{\ell}$ stands for the projection of a vector $z\in\rr^m$ onto $\rr^k\times\{0\}$. Let $\{N_1,\ldots,N_{m-n}\}$ be an orthonormal basis of vectors normal to $P$. For any coordinate vector field $\ptl_j$ in $\rr^m$ it is clear that
\[
1=|\ptl_j|^2=\sum_{i=1}^n\escpr{\ptl_j,e_i}^2+\sum_{i=1}^{m-n}\escpr{\ptl_j,N_i}^2.
\]
By summing up from $j=1$ to $j=k$, we deduce
\[
k=\sum_{i=1}^n|e_i^\ell|^2+\sum_{i=1}^{m-n}|N_i^\ell|^2.
\]
Thus, we have obtained
\begin{equation}
\label{eq:artemis}
\Delta^h_P\,v=\sum_{i=1}^n|e_i^\ell|^2+\escpr{\nabla h,X}+\escpr{\ovh^h,X}=k-\sum_{i=1}^{m-n}|N_i^\ell|^2+\escpr{\nabla h,X}+\escpr{\ovh^h,X}.
\end{equation}
At this point, we use that $(\nabla h)(x,y)=-(x,y)+\big(\xi'(d)\,x/d,(\nabla\rho)(y)\big)$ to conclude that
\begin{align}
\label{eq:lapcil}
\Delta^h_P\,v&=\sum_{i=1}^n|e_i^\ell|^2-d^2+\xi'(d)\,d+\escpr{\ovh^h,X}
\\
\nonumber
&=k-\sum_{i=1}^{m-n}|N_i^\ell|^2-d^2+\xi'(d)\,d+\escpr{\ovh^h,X}.
\end{align}

Now, suppose that we have the hypotheses in (i) with $H^h_{c,n}(t)\leq-\la_0$ for any $t\geq t_0$. The fact that $P$ is outside the solid cylinder $B_{t_0}\times\rr^{m-k}$ ensures that $v\geq t_0^2/2>0$ on $P$. The first equality in \eqref{eq:lapcil} together with the definition of $H^h_{c,n}$ and the Cauchy-Schwarz inequality yield the estimate
\[
\Delta^h_P\,v\leq n-d^2+\xi'(d)\,d+\escpr{\ovh^h,X}\leq
d\,\left(H^h_{c,n}(d)+\la_0\right)\leq 0.
\]
On the other hand, if the hypotheses in (ii) are satisfied, then $v\leq t_0^2/2$ on $P$. Moreover, the second equality in \eqref{eq:lapcil} and the same arguments as above imply that inequality
\[
\Delta^h_P\,v\geq k-1-d^2+\xi'(d)\,d+\escpr{\ovh^h,X}\geq
d\,\left(H^h_{c}(d)-\la_0\right)\geq 0
\]
holds on $P-\big(\{0\}\times\rr^{m-k}\big)$, which is a dense subset of $P$. In any case, the $h$-parabolicity of $P$ ensures that $v$ is constant on $P$, so that $P\subeq\cil_t$ for some $t\geq t_0$ or $t\leq t_0$. Indeed, when $P$ is a hypersurface then $P=\cil_t$ since $P$ is properly immersed and $\cil_t$ is connected. Note also that, since $\Delta^h_P\,v=0$, all the inequalities above must be equalities. This gives us the conclusions in the claim.

It remains to prove statement (i) with the hypothesis $H^h_c(t)\leq-\la_0$ for any $t\geq t_0$. We take the function $d=\sqrt{2v}$, which is smooth outside $\{0\}\times\rr^{m-k}$. By using the chain rule, we have
\[
\Delta^h_P\,d=\frac{\sqrt{2}}{2\,\sqrt{v}}\,\Delta^h_P\,v-\frac{\sqrt{2}}{4\,v^{3/2}}\,|\nabla_P v|^2=\frac{1}{d}\,\Delta^h_P\,v-\frac{1}{d^3}\,|\nabla_P v|^2.
\]
Note that
\[
|\nabla_P v|^2=|X-X^\bot|^2=|X|^2-\sum_{i=1}^{m-n}\escpr{X,N_i}^2=d^2\left(1-\frac{1}{d^2}\,\sum_{i=1}^{m-n}\escpr{X,N_i^\ell}^2\right).
\]
From the second equality in \eqref{eq:lapcil} and the inequality $(1/d^2)\,\sum_{i=1}^{m-n}\escpr{X,N_i^\ell}^2\leq\sum_{i=1}^{m-n}|N_i^\ell|^2$, we deduce
\begin{align*}
\Delta^h_P\,d\leq\frac{1}{d}\,\big(k-1-d^2+\xi'(d)\,d+\escpr{\ovh^h,X}\big)=H^h_c(d)+\frac{1}{d}\,\escpr{\ovh^h,X}\leq 0,
\end{align*}
and the proof finishes as in the previous cases.
\end{proof}

\begin{remark}
In general the hypothesis $H^h_{c,n}(t)\leq-\la_0$ for any $t\geq t_0$ is independent from the hypothesis $H^h_c(t)\leq-\la_0$ for any $t\geq t_0$.
\end{remark}

\begin{example}
Suppose that the function $\xi$ in the statement of Theorem~\ref{th:cylinder} is $C^2$ and concave. Then, there is a unique $t_0>0$ such that the cylinder $\cil_{t_0}$ is $h$-minimal. In this situation, it follows that an $h$-minimal hypersurface $P$ properly immersed in $\rr^m$ and contained in the interior or the exterior of $\cil_{t_0}$ coincides with $\cil_{t_0}$. In the particular case of the Gaussian weight (for which the critical cylinder is $\cil_{\sqrt{k-1}}$) this result was previously proved by Cavalcante and Espinar~\cite[Thms.~1.2 and 1.3]{espinar-halfspace} by a different method, see also \cite[Thm.~2]{pigola-rimoldi} and the recent paper of Impera, Pigola and Rimoldi~\cite[Thm.~A]{ipr}.
\end{example}

Another interesting case where the cylinders $\cil_t$ have constant weighted mean curvature occurs in $\rr^m=\rr^k\times\rr^{m-k}$ with weight $e^h$ such that $h(x,y):=\mu(y)$, for some function $\mu\in C^1(\rr^{m-k})$. For this weight it is clear from \eqref{eq:hmccil2} that $H^h_c(t)=(k-1)/t$ and so, none of the cylinders $\cil_t$ is $h$-minimal. In this situation we get this result.

\begin{proposition}
\label{prop:cylmin}
In $\rr^m=\rr^k\times\rr^{m-k}$ we consider a weight $e^h$, where $h(x,y):=\mu(y)$ for some function $\mu\in C^1(\rr^{m-k})$. Let $P^n$ be an $h$-minimal submanifold with $n\geq m-k$ immersed in some solid cylinder $\overline{B}_{t_0}\times\rr^{m-k}$.
\begin{itemize}
\item[(i)] If $n>m-k$ then $P$ is $h$-hyperbolic.
\item[(ii)] If $n=m-k$ and $P$ is a complete $h$-parabolic submanifold, then $P=\{x_0\}\times\rr^{m-k}$ for some $x_0\in\rr^k$, and $\rr^{m-k}$ is weighted parabolic for the weight $e^\mu$.
\end{itemize}
In particular, there are no compact $h$-minimal submanifolds of dimension $n\geq m-k$.
\end{proposition}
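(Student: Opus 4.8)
The plan is to adapt the computation in Theorem~\ref{th:cylinder} and then invoke the Liouville property of parabolic submanifolds, applied to the globally smooth function $v(x,y):=|x|^2/2=d(x,y)^2/2$ on $\rr^m$. Since $v\in C^\infty(\rr^m)$ there are no regularity issues on $P$ (in contrast to the function $d$ used in Theorem~\ref{th:cylinder}). First I would compute $\Delta^h_P v$ as in \eqref{eq:artemis}. The two decisive simplifications for the weight $h(x,y)=\mu(y)$ are that $\nabla h=(0,(\nabla\mu)(y))$ is purely vertical, so that $\escpr{\nabla h,X}=0$ because $X(x,y)=(x,0)$ is horizontal, and that $\escpr{\ovh^h,X}=0$ since $P$ is $h$-minimal. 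Substituting these into \eqref{eq:artemis} leaves
\[
\Delta^h_P v=k-\sum_{i=1}^{m-n}|N_i^\ell|^2,
\]
where $\{N_1,\ldots,N_{m-n}\}$ is a local orthonormal frame normal to $P$ and $N_i^\ell$ denotes the projection onto $\rr^k\times\{0\}$. As $|N_i^\ell|^2\leq|N_i|^2=1$, summing $m-n$ terms yields $\Delta^h_P v\geq k-(m-n)=n-(m-k)\geq 0$, so $v$ is $h$-subharmonic; and the confinement $P\subeq\overline{B}_{t_0}\times\rr^{m-k}$ gives $v\leq t_0^2/2$, so $v$ is bounded from above.

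For part (i), when $n>m-k$ the estimate above reads $\Delta^h_P v\geq n-(m-k)>0$ at every point. Were $P$ to be $h$-parabolic, the defining Liouville property would force the bounded-above $h$-subharmonic function $v$ to be constant, hence $\Delta^h_P v\equiv 0$, contradicting the strict positivity. Therefore $P$ is $h$-hyperbolic.

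For part (ii), with $n=m-k$ we have $m-n=k$ and $\Delta^h_P v\geq 0$. Completeness together with the assumed $h$-parabolicity forces $v$ constant, so $\Delta^h_P v\equiv 0$, i.e. $\sum_{i=1}^{k}|N_i^\ell|^2=k$. Since each summand is at most $1$, every $|N_i^\ell|^2=1$, so each $N_i$ lies in $\rr^k\times\{0\}$; thus the normal space equals $\rr^k\times\{0\}$ and $T_pP=\{0\}\times\rr^{m-k}$ for all $p\in P$. Consequently the horizontal coordinate $x$ is locally constant along $P$, and by connectedness $P\subeq\{x_0\}\times\rr^{m-k}$ for some $x_0\in\rr^k$. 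The subtle point, which I expect to be the main obstacle, is upgrading this inclusion to equality: here $P$ is only complete, not properly immersed, so Remark~\ref{re:details} does not apply directly. Instead I would observe that the inclusion $P\hookrightarrow\{x_0\}\times\rr^{m-k}$ is an isometric immersion between Riemannian manifolds of equal dimension, hence a local isometry; a complete local isometry onto a connected target is a Riemannian covering, and since $\{x_0\}\times\rr^{m-k}\cong\rr^{m-k}$ is simply connected the covering is trivial, giving $P=\{x_0\}\times\rr^{m-k}$. Restricting the metric and the weight shows this copy is isometric, as a weighted manifold, to $\rr^{m-k}$ with weight $e^\mu$, so the inherited $h$-parabolicity of $P$ yields the weighted parabolicity of $(\rr^{m-k},e^\mu)$.

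Finally, for the non-existence assertion I would use that a compact submanifold is complete and automatically $h$-parabolic (by the maximum principle, as noted after Theorem~\ref{th:mp}). Any compact $P^n$ is bounded in the $\rr^k$ factor, hence confined in some $\overline{B}_{t_0}\times\rr^{m-k}$. If $n>m-k$ this contradicts the $h$-hyperbolicity established in (i); if $n=m-k$, part (ii) forces $P=\{x_0\}\times\rr^{m-k}$, which is non-compact. In either case we reach a contradiction, so no compact $h$-minimal submanifold of dimension $n\geq m-k$ exists.
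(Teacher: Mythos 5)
Your proof is correct and follows essentially the same route as the paper: the identity \eqref{eq:artemis} applied to $v=|x|^2/2$, the vanishing of $\escpr{\nabla h,X}$ and $\escpr{\ovh^h,X}$, and the Liouville property to force $v$ constant. You merely make explicit a few steps the paper leaves implicit (the covering argument upgrading $P\subeq\{x_0\}\times\rr^{m-k}$ to equality under completeness, and the case analysis for the final non-existence claim), all of which are sound.
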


\begin{proof}
From the second equality in \eqref{eq:artemis}, the function $v(x,y):=|x|^2/2$ satisfies
\[
\Delta^h_P\,v=k-\sum_{i=1}^{m-n}|N_i^\ell|^2+\escpr{\nabla h,X}+\escpr{\ovh^h,X}\geq k-(m-n)\geq 0
\]
whereas $v\leq t_0^2/2$. The previous estimate on $\Delta^h_P\,v$ shows that $v$ is not a constant function when $n>m-k$, so that $P$ is $h$-hyperbolic in this case. On the other hand, if $n=m-k$ and $P$ is $h$-parabolic, then $v$ is constant on $P$. This implies that $N_i^\ell=N_i$ for any $i=1,\ldots,k$ and so, $T_pP=\{0\}\times\rr^{m-k}$ for any $p\in P$. From this we deduce that the horizontal projection $(x,y)\mapsto x$ is constant on $P$, i.e., there is $x_0\in\rr^k$ such that $P\subeq\{x_0\}\times\rr^{m-k}$. Moreover, we get the equality provided $P$ is complete. Finally, the $h$-parabolicity of $\{x_0\}\times\rr^{m-k}$ is equivalent to the $\mu$-parabolicity of $\rr^{m-k}$.
\end{proof}

\begin{example}
The result applies in $\rr^m=\rr^{m-1}\times\rr$ with weight $e^h$ such that $h(x,t):=t$. This is a relevant situation since the $h$-minimal hypersurfaces are the translating solitons of the mean curvature flow. In relation to this P\'erez-Garc\'ia~\cite[Thm.~2.2]{perez-garcia} has shown non-existence of non-compact embedded $(m-1)$-dimensional translators contained in any cylinder of $\rr^m$.
\end{example}

\subsection{Half-space results}
\label{subsec:half-space}
\noindent

In this section we analyze the height function with respect to a unit vector in $\rr^m$ to characterize submanifolds of bounded $h$-mean curvature within a closed half-space. We begin by computing the mean curvature of Euclidean hyperplanes with respect to some weights.

Take a weight $e^h$ in $\rr^m$ where $h:=f(r)+g$, for some $C^1$ functions $f(r),g:\rr^m\to\rr$. Consider the hyperplane $\mathcal{L}_t:=\{p\in\rr^m\,;\,\escpr{p,a}=t\}$, where $a\in\rr^m$ is a unit vector and $t\in\rr$. The closed half-spaces determined by $\ele_t$ are the sets $\ele_t^+:=\{p\in\rr^m\,;\,\escpr{p,a}\geq t\}$ and $\ele_t^-:=\{p\in\rr^m\,;\,\escpr{p,a}\leq t\}$. From \eqref{eq:fmc}, the $h$-mean curvature of $\ele_t$ is given by
\begin{equation}
\label{eq:fmchyp}
H^h_{\ele_t}=-\escpr{\nabla h,a}=-\escpr{f'(r)\,\nabla r+\nabla g,a}=-\frac{f'(r)}{r}\,t-\frac{\ptl g}{\ptl a},
\end{equation}
where the last equality holds on $\ele_t\setminus\{0\}$. So, the linear hyperplane $\ele_0$ is $h$-minimal if and only if $\ptl g/\ptl a=0$ on $\ele_0$. In particular, for a radial weight $h=f(r)$ all the linear hyperplanes in $\rr^m$ are $h$-minimal. Indeed, these are the unique $h$-minimal hyperplanes unless $f$ is constant for $s\geq s_0$. 

Now, we establish a half-space result for $h$-minimal submanifolds inside a linear half-space. 

\begin{theorem}
\label{th:half-space1}
In $\rr^m$ we consider a weight $e^h$ where $h:=f(r)+g$, for some $f(r),g\in C^1(\rr^m)$ such that $f$ is non-increasing and $f'(s)\to-\infty$ when $s\to\infty$. Let $a\in\rr^m$ be a unit vector for which $\ptl g/\ptl a=0$ on the associated linear hyperplane $\ele_0$. Suppose also that:
\begin{itemize}
\item[(i)] $\ptl g/\ptl a\geq 0$ on $\ele_0^-$ $($resp. $\ptl g/\ptl a\leq 0$ on $\ele_0^+$$)$,
\item[(ii)] in $\ele_0^-\setminus\{0\}$ $($resp. $\ele_0^+\setminus\{0\}$$)$ we have $\escpr{\nabla g,\nabla r}\leq\delta$ for some constant $\delta\in\rr$.
\end{itemize} 
If $P^n$ is an $h$-minimal submanifold properly immersed in $\rr^m$ with $P\sub\ele_0^-$ $($resp. $P\sub\ele_0^+$$)$, then $P\subeq\ele_t$ for some $t\leq 0$ $($resp. $t\geq 0$$)$ and $\ptl g/\ptl a=0$ on $P$. In case $t\neq 0$ then $f'(r)=0$ on $P$. As a consequence $P=\ele_0$ when $P$ is a hypersurface.
\end{theorem}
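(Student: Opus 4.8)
The plan is to reproduce the parabolic scheme of Theorems~\ref{th:ball1} and~\ref{th:cylinder}, this time with the height function $u:=\escpr{\cdot\,,a}$, whose level sets are precisely the hyperplanes $\ele_t$. I would treat the case $P\subeq\ele_0^-$ in detail; the case $P\subeq\ele_0^+$ follows verbatim after replacing $u$ by $-u$. Since $u$ is the restriction to $P$ of an affine function on $\rr^m$ with parallel gradient $\nabla u=a$ and vanishing Hessian, applying \eqref{eq:divnorm} to the normal field $a^\bot$ gives $\Delta_P u=\escpr{n\ovh,a}$; combining this with \eqref{weighted-Laplacian} and \eqref{eq:mcvector} I would obtain
\[
\Delta^h_P\,u=\escpr{\ovh^h,a}+\escpr{\nabla h,a}.
\]
Using the $h$-minimality $\ovh^h=0$, the splitting $h=f(r)+g$, and $\escpr{\nabla r,a}=u/r$, this reduces on $P\setminus\{o\}$ to $\Delta^h_P\,u=f'(r)\,(u/r)+\ptl g/\ptl a$.

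Next I would check the sign of each summand on $\ele_0^-$. There $u\leq 0$, and since $f$ is non-increasing we have $f'(r)\leq 0$, so $f'(r)\,(u/r)\geq 0$; together with hypothesis (i), which gives $\ptl g/\ptl a\geq 0$, this shows $\Delta^h_P\,u\geq 0$, i.e.\ $u$ is $h$-subharmonic, and it is bounded from above by $0$. The parabolicity of $P$ is where hypothesis (ii) and the geometry of $\rr^m$ enter: taking $w(t)=t$ (so $w\notin L^1(0,\infty)$ and $H(t)=1/t$ is bounded at infinity), noting that $P$ has bounded $h$-mean curvature since $\ovh^h=0$, and using that (ii) together with $P\subeq\ele_0^-$ yields $\escpr{\nabla g,\nabla r}\leq\delta$ on $P\setminus\{o\}$, Corollary~\ref{cor:parabolicity} guarantees that $P$ is $h$-parabolic (the compact case being immediate).

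The Liouville property then forces $u$ to equal a constant $t$, necessarily $t\leq 0$, so $P\subeq\ele_t$. Since now $\Delta^h_P\,u\equiv 0$ while both summands above are nonnegative, each must vanish identically on $P$: this yields $\ptl g/\ptl a=0$ on $P$ and $f'(r)\,(t/r)=0$. If $t\neq 0$ then $o\notin\ele_t$, and since $P\subeq\ele_t$ we have $r>0$ on $P$, hence $f'(r)=0$ on $P$, which is the asserted dichotomy. Finally, when $P$ is a hypersurface, the properness of $P$, the connectedness of $\ele_t$ and Remark~\ref{re:details} give $P=\ele_t$; were $t\neq 0$, the previous step would force $f'(r)=0$ on the entire hyperplane $\ele_t=P$, contradicting $f'(s)\to-\infty$ because $r$ is unbounded on $\ele_t$. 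Thus $t=0$ and $P=\ele_0$.

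I expect the delicate point to be the sign bookkeeping that makes the two terms of $\Delta^h_P\,u$ individually nonnegative: this is exactly what the monotonicity of $f$ and hypothesis (i) are tailored to provide, and it is what simultaneously delivers the $h$-subharmonicity and, via the equality case, separates the two geometric conclusions $\ptl g/\ptl a=0$ and $f'(r)=0$.
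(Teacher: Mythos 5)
Your proposal is correct and follows essentially the same route as the paper's proof: the same height function, the same computation $\Delta^h_P u=\escpr{\ovh^h,a}+\escpr{\nabla h,a}=f'(r)\,(u/r)+\ptl g/\ptl a$, the same appeal to Corollary~\ref{cor:parabolicity} for parabolicity, and the same Liouville plus equality-case analysis. The only (harmless) difference is that you make the sign bookkeeping of the two summands fully explicit, which the paper leaves implicit.
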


\begin{proof}
A submanifold $P^n$ in the conditions of the statement is $h$-parabolic. This is clear if $P$ is compact; in the non-compact case we deduce the $h$-parabolicity  from Corollary~\ref{cor:parabolicity}.  

Consider the height function $\pi:\rr^m\to\rr$ defined by $\pi(p):=\escpr{p,a}$. Let us compute $\Delta^h_P\pi$. Since $\nabla\pi=a$ then $\nabla_P\pi=a^\top$, and so
\[
\Delta_P\pi=\divv_P(a-a^\bot)=\escpr{n\overline{H}_P,a},
\]
where $\overline{H}_P$ is the mean curvature vector and we have used \eqref{eq:divnorm}. From \eqref{weighted-Laplacian} and \eqref{eq:mcvector} we obtain
\begin{align}
\nonumber
\Delta^h_P\pi&=\Delta_P\pi+\escpr{\nabla h,\nabla_P\pi}=\escpr{n\overline{H}_P,a}+\escpr{\nabla h,a}-\escpr{\nabla h,a^\bot}
\\
\label{eq:laplace-height}
&=\escpr{\ovh^h,a}+\escpr{\nabla h,a}.
\end{align}
By taking into account that $P$ is $h$-minimal and $h=f(r)+g$, we get
\[
\Delta^h_P\pi=\escpr{\nabla h,a}=\frac{f'(r)}{r}\,\pi+\frac{\ptl g}{\ptl a}, \quad\text{on} \ P\setminus\{0\}.
\]
Now, our hypotheses imply that $\Delta^h_P\pi\geq 0$ and $\pi\leq 0$ on $P\setminus\{0\}$ (resp. $\Delta^h_P\pi\leq 0$ and $\pi\geq 0$ on $P\setminus\{0\}$). By continuity these inequalities are valid on the whole submanifold $P$. Thus the $h$-parabolicity of $P$ yields that $\pi$ is constant on $P$, so that $P\subeq\ele_t$ for some $t\leq 0$ (resp. $t\geq 0$). From the fact that $\Delta^h_P\pi=0$ it follows that $f'(r)\,t=0$ and $\ptl g/\ptl a=0$ on $P$. Thus, it is clear that $f'(r)=0$ on $P$ provided $t\neq 0$. Finally, if $n=m-1$, then $P=\ele_t$ because $P$ is properly immersed in $\rr^m$ and $\ele_t$ is connected. Moreover, since $f'(r)\,t=0$ on $P$ and $f'(s)\to -\infty$ when $s\to\infty$ we infer that $t=0$. Thus $P=\ele_0$ and the proof is complete.
\end{proof}

\begin{examples}
1. The result applies when the perturbation term $g$ does not depend on the coordinate $x_i$ and $\escpr{\nabla g,\nabla r}\leq\delta$ in $\rr^m\setminus\{0\}$. In this case we deduce that any submanifold $P^n$ properly immersed in $\rr^m$ and contained in one of the half-spaces $x_i\leq 0$ or $x_i\geq 0$ is inside a hyperplane $x_i=t$, with $t=0$ when $n=m-1$.

2. Consider a radial weight $e^h$ where $h=f(r)$ and $f$ is decreasing with $f'(s)\to-\infty$ when $s\to\infty$. In this situation we know from \eqref{eq:fmchyp} that the unique $h$-minimal hyperplanes are the linear ones. By Theorem~\ref{th:half-space1} any $h$-minimal submanifold $P$ properly immersed in $\rr^m$ and contained within a closed linear half-space $\ele_0^+$ or $\ele_0^-$ satisfies $P\subeq\ele_0$, with equality $P=\ele_0$ provided $P$ is a hypersurface. This holds when $f(s):=as^k$ with $a<0$ and $k>1$. For $k=2$ we recover the half-space theorem for self-shrinker hypersurfaces proved by Pigola and Rimoldi~\cite[Thm.~3]{pigola-rimoldi}, see also
Cavalcante and Espinar~\cite[Thm.~1.1]{espinar-halfspace} for a different proof. The result is still valid for Gaussian perturbations of the form $h(r):=-r^2/2+g(r)$, where $g$ is a $C^2$ concave function with $g'(0)=0$.

3. Sometimes we can apply Theorem~\ref{th:half-space1} with a perturbation term $g$ which is not radial and involves all the coordinates $x_i$. For instance, if $g(x,t):=\xi(x)+\rho(t)$, where $\xi\in C^1(\rr^{m-1})$ with $\escpr{(\nabla\xi)(x),x}\leq\delta\,|x|$ for some $\delta\geq 0$, and $\rho\in C^1(\rr)$ with $t\,\rho'(t)\leq 0$ for any $t\in\rr$, then the unique proper $h$-minimal hypersurface contained inside $t\leq 0$ or $t\geq 0$ is the hyperplane $t=0$. The same conclusion holds for $g(x,t):=-t^2\,|x|^2/2$.
\end{examples}

Our next aim is to prove a half-space theorem for submanifolds of bounded mean curvature (possibly non-minimal) with respect to a weight $e^h$ in $\rr^m$ with $h=f(r)+g$. We first analyze when a hyperplane $\ele_t$ with $t\neq 0$ has constant $h$-mean curvature. In the radial case $h=f(r)$ equation~\eqref{eq:fmchyp} implies that $H^h_{\ele_t}$ is constant if and only if there are $b,c\in\rr$ such that $f(s)=b\,s^2+c$ for any $s\geq|t|$. This is the case of the Gaussian weight $e^{-r^2/2}$, where $H^h_{\ele_t}=t$. On the other hand, for a Gaussian perturbation $e^h$ with $h=-r^2/2+g$, we have $H^h_{\ele_t}=t-\la$ if and only if $\ptl g/\ptl a=\la$ on $\ele_t$. For these weights we can establish an extension of Theorem~\ref{th:half-space1} which is valid for submanifolds within a closed half-space and having bounded $h$-mean curvature. In the particular case of $\lambda$-hypersurfaces (those with constant weighted mean curvature for the Gaussian weight) we deduce a result obtained by Cavalcante and Espinar~\cite[Thm.~1.4]{espinar-halfspace} from a different method.

\begin{theorem}
\label{th:half-space2}
In $\rr^m$ we consider a weight $e^h$ where $h:=-r^2/2+g$, for some $g\in C^1(\rr^m)$. Let $a\in\rr^m$ be a unit vector such that $\ptl g/\ptl a=\la$ on the associated hyperplane $\ele_{t_0}$. Suppose also that:
\begin{itemize}
\item[(i)] $\ptl g/\ptl a\geq\la$ on $\ele_{t_0}^-$ $($resp. $\ptl g/\ptl a\leq\la$ on $\ele_{t_0}^+$$)$,
\item[(ii)] in $\ele_{t_0}^-\setminus\{0\}$ $($resp. $\ele_{t_0}^+\setminus\{0\}$$)$ we have $\escpr{\nabla g,\nabla r}\leq\delta$ for some constant $\delta\in\rr$.
\end{itemize} 
If a submanifold $P^n$ properly immersed in $\rr^m$ satisfies $|\ovh^h|\leq \la-t_0$ $($resp. $|\ovh^h|\leq t_0-\la$$)$ and $P\sub\ele_{t_0}^-$ $($resp. $P\sub\ele_{t_0}^+$$)$, then $P\subeq\ele_{t_0}$ and $\ovh^h=(t_0-\la)\,a$ on $P$. Moreover, if $P$ is a hypersurface then $P=\ele_{t_0}$.
\end{theorem}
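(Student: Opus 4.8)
The plan is to mirror the argument of Theorem~\ref{th:half-space1}, replacing the $h$-minimality of $P$ with the bound on $|\ovh^h|$ and absorbing the extra term $\escpr{\ovh^h,a}$ through the Cauchy--Schwarz inequality. I treat the case $P\subeq\ele_{t_0}^-$; the other is symmetric. First I would secure the $h$-parabolicity of $P$. If $P$ is compact this is automatic; if $P$ is non-compact I would invoke Corollary~\ref{cor:parabolicity} with $M_w=\rr^m$ (so $w(t)=t\notin L^1(0,\infty)$ and $H(t)=1/t$ is bounded at infinity), with $f(r)=-r^2/2$ (so $f'(t)=-t\to-\infty$), and perturbation term $g$. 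The bounded $h$-mean curvature hypothesis and condition (ii), which gives $\escpr{\nabla g,\nabla r}\leq\delta$ on $\ele_{t_0}^-\setminus\{0\}$ and hence away from the origin on $P$, are precisely the inputs required by the corollary.

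Next I would analyze the height function $\pi(p):=\escpr{p,a}$. Formula \eqref{eq:laplace-height} from the proof of Theorem~\ref{th:half-space1} gives $\Delta^h_P\pi=\escpr{\ovh^h,a}+\escpr{\nabla h,a}$, valid on all of $P$. The essential simplification is that the Gaussian part has $\nabla(-r^2/2)=-r\,\nabla r=-p$, a smooth field with no singularity at the origin, so that $\escpr{\nabla h,a}=-\pi+\ptl g/\ptl a$ and therefore
\[
\Delta^h_P\pi=\escpr{\ovh^h,a}-\pi+\frac{\ptl g}{\ptl a}\quad\text{on }P.
\]
On $\ele_{t_0}^-$ one has $\pi\leq t_0$; the bound $|\ovh^h|\leq\la-t_0$ (which tacitly requires $\la\geq t_0$) together with Cauchy--Schwarz gives $\escpr{\ovh^h,a}\geq-(\la-t_0)=t_0-\la$; and hypothesis (i) gives $\ptl g/\ptl a\geq\la$. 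Adding the three estimates yields $\Delta^h_P\pi\geq(t_0-\la)-t_0+\la=0$. Thus $\pi$ is $h$-subharmonic and bounded above, so $h$-parabolicity forces $\pi\equiv c$ for some constant $c\leq t_0$, i.e. $P\subeq\ele_c$.

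The key final step, and the only one demanding care, is to upgrade $c\leq t_0$ to $c=t_0$ and to read off $\ovh^h$ from the equality case. Since $\pi$ is constant, $\Delta^h_P\pi=0$, so the three inequalities above hold simultaneously with equality. Feeding $\pi\equiv c$ back into $0=\escpr{\ovh^h,a}-c+\ptl g/\ptl a\geq(t_0-\la)-c+\la=t_0-c$ gives $c\geq t_0$, whence $c=t_0$ and $P\subeq\ele_{t_0}$. Sharpness of Cauchy--Schwarz then forces $\escpr{\ovh^h,a}=-|\ovh^h|$ with $|\ovh^h|=\la-t_0$, i.e. $\ovh^h=(t_0-\la)\,a$ on $P$. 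Finally, if $P$ is a hypersurface then $P\subeq\ele_{t_0}$ with $\dim P=\dim\ele_{t_0}=m-1$, so Remark~\ref{re:details} (properness of $P$ together with connectedness of $\ele_{t_0}$) yields $P=\ele_{t_0}$. I expect the main obstacle to be precisely this equality analysis: the constancy of $\pi$ must be used twice---once to conclude $\Delta^h_P\pi=0$ and once to pin down the value $c=t_0$---while tracking the sharpness of Cauchy--Schwarz carefully enough to extract the exact identity for $\ovh^h$ rather than a mere inequality.
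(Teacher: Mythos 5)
Your proposal is correct and follows essentially the same route as the paper: parabolicity via Corollary~\ref{cor:parabolicity}, the identity $\Delta^h_P\pi=\escpr{\ovh^h,a}-\pi+\ptl g/\ptl a$ from \eqref{eq:laplace-height}, the sign of $\Delta^h_P\pi$ via Cauchy--Schwarz, and the equality analysis after constancy of $\pi$. Your write-up is in fact more explicit than the paper's on the final step (pinning down $c=t_0$ and extracting $\ovh^h=(t_0-\la)\,a$ from sharpness of Cauchy--Schwarz), which the paper compresses into one sentence.
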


\begin{proof}
The proof is similar to that of Theorem~\ref{th:half-space1}. First, we apply Corollary~\ref{cor:parabolicity} to deduce that $P$ is $h$-parabolic. By equation \eqref{eq:laplace-height}, the function $\pi:\rr^m\to\rr$ defined by $\pi(p):=\escpr{p,a}$ satisfies
\[
\Delta^h_P\pi=\escpr{\ovh^h,a}+\escpr{\nabla h,a}=\escpr{\ovh^h,a}-\pi+\frac{\ptl g}{\ptl a}.
\]
Our hypotheses and the Cauchy-Schwarz inequality yield that $\Delta^h_P\pi\geq 0$ and $\pi\leq t_0$ on $P$ (resp. $\Delta^h_P\pi\leq 0$ and $\pi\geq t_0$ on $P$). From the $h$-parabolicity of $P$ we conclude that $\pi$ is constant on $P$. Now, the identity $\Delta^h_P\pi=0$ on $P$ gives us $P\subeq\ele_{t_0}$ and $\ovh^h=(t_0-\la)\,a$ on $P$. In case $n=m-1$ we get $P=\ele_{t_0}$. This finishes the proof.
\end{proof}

Next, we analyze submanifolds within a closed half-space and having bounded mean curvature with respect to a weight $e^h$ in $\rr^m=\rr^{m-1}\times\rr$, where $m\geq 2$ and $h(x,t):=\mu(t)$ for some $\mu\in C^1(\rr)$. This context includes the translating solitons of the mean curvature flow, which appears as $h$-minimal submanifolds in the special case where $\mu(t)=t$. 
In the previous situation, by a \emph{horizontal hyperplane} we mean a hyperplane $\ele_t$ with unit normal $a=\ptl_t:=(0,1)$. A \emph{vertical hyperplane} is one for which $\escpr{a,\ptl_t}=0$. From \eqref{eq:fmchyp} we see that $H^h_{\ele_t}=-\mu'(t)$ provided $\ele_t$ is horizontal, whereas any vertical hyperplane is $h$-minimal. A hyperplane which is neither horizontal nor vertical has constant $h$-mean curvature if and only if $\mu$ is an affine function.

Now, we can prove this statement.

\begin{theorem}
\label{th:half-space3}
In $\rr^m=\rr^{m-1}\times\rr$ we consider a weight $e^h$ with $h(x,t):=\mu(t)$ for some $\mu\in C^1(\rr)$. Let $P^n$ be an $h$-parabolic submanifold in $\rr^m$.
\begin{itemize}
\item[(i)] Suppose that there is a constant $c\geq 0$ such that $\mu'\geq c$ $($resp. $\mu'\leq -c$$)$ on $\rr$. If $|\ovh^h|\leq c$ and $P\sub\ele^-_{t_0}$ $($resp. $P\sub\ele^+_{t_0}$$)$ for some horizontal hyperplane $\ele_{t_0}$, then $P$ is contained in some horizontal hyperplane $\ele_t$ with $\mu'(t)=c$ $($resp. $\mu'(t)=-c$$)$ and $\ovh^h=-c\,\ptl_t$ $($resp. $\ovh^h=c\,\ptl_t$$)$ on $P$. Moreover, if $P$ is a complete hypersurface, then $P=\ele_t$ and $m\in\{2,3\}$.
\item[(ii)] Suppose that $\mu'\geq 0$ $($resp. $\mu'\leq 0$$)$ on $\rr$. Consider a unit vector $a\in\rr^m$ such that $\escpr{a,\ptl_t}\in(0,1)$. If $P$ is $h$-minimal and $P\sub\ele^-_{t_0}$ $($resp. $P\sub\ele^+_{t_0}$$)$ for some hyperplane $\ele_{t_0}$ with unit normal $a$, then $P\subeq\ele_t$ for some $t\in\rr$ and $\mu$ is constant over the vertical projection of $P$. In case $P$ is a complete hypersurface then $P=\ele_t$ and $\mu$ is constant.
\item[(iii)] If $P$ is $h$-minimal and contained inside a vertical half-space $\ele^-_{t_0}$ or $\ele_{t_0}^+$, then $P$ is contained in some vertical hyperplane $\ele_t$. Moreover, if $P$ is a complete hypersurface, then $P=\ele_t$.
\end{itemize}
\end{theorem}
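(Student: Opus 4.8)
The plan is to run, in all three cases, the height-function argument already used for Theorem~\ref{th:half-space1}. For a unit vector $a\in\rr^m$ set $\pi(p):=\escpr{p,a}$; since the derivation of \eqref{eq:laplace-height} uses nothing beyond the immersion, it remains valid here and gives
\[
\Delta^h_P\pi=\escpr{\ovh^h,a}+\escpr{\nabla h,a}.
\]
Because $h(x,t)=\mu(t)$ we have $\nabla h=\mu'(t)\,\ptl_t$, so $\escpr{\nabla h,a}=\mu'(t)\,\escpr{a,\ptl_t}$, where $t$ is the vertical coordinate. In each case I would combine the sign hypothesis on $\mu'$, the bound on $\ovh^h$, and the confinement of $P$ to a closed half-space to show that $\pi$ (or $-\pi$) is $h$-subharmonic and bounded from above; the $h$-parabolicity of $P$ then forces $\pi$ to be constant, placing $P$ inside a hyperplane, and the remaining geometric data are extracted from the resulting identity $\Delta^h_P\pi=0$.

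For (i) take $a=\ptl_t$, so $\pi$ is the height and $\Delta^h_P\pi=\escpr{\ovh^h,\ptl_t}+\mu'(t)$. If $\mu'\geq c$ and $P\sub\ele^-_{t_0}$, then Cauchy--Schwarz gives $\escpr{\ovh^h,\ptl_t}\geq-|\ovh^h|\geq-c$, so $\Delta^h_P\pi\geq0$, while $\pi\leq t_0$; parabolicity yields $\pi\equiv t$ and hence $P\subeq\ele_t$ with $\ele_t$ horizontal. Equality in $\Delta^h_P\pi=0$ forces $\mu'(t)=c$ and $\escpr{\ovh^h,\ptl_t}=-c$, and since $\ptl_t$ is normal to $P$ along the horizontal $\ele_t$, the equality case of Cauchy--Schwarz gives $\ovh^h=-c\,\ptl_t$; the case $\mu'\leq-c$, $P\sub\ele^+_{t_0}$ is symmetric. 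If $P$ is moreover a complete hypersurface, then $P\subeq\ele_t$ is an open subset of the same dimension, the inclusion is a local isometry of the complete $P$ onto the simply connected $\ele_t\cong\rr^{m-1}$, hence a covering and so a diffeomorphism, giving $P=\ele_t$. The restriction $m\in\{2,3\}$ is then a consistency constraint: along the horizontal $\ele_t$ the weight $e^{\mu(t)}$ is constant, so the assumed $h$-parabolicity of $P=\ele_t$ reduces to the Riemannian parabolicity of $\rr^{m-1}$, which forces $m-1\leq2$.

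For (ii) take the tilted $a$ and put $\beta:=\escpr{a,\ptl_t}\in(0,1)$. Here $h$-minimality gives $\Delta^h_P\pi=\mu'(t)\,\beta$, whose sign is that of $\mu'$; with $\pi$ bounded from above (resp. below) and parabolicity this makes $\pi$ constant, so $P\subeq\ele_t$ with normal $a$, and $\Delta^h_P\pi=0$ forces $\mu'=0$ on the vertical projection of $P$, i.e.\ $\mu$ is constant there. If $P$ is a complete hypersurface then $P=\ele_t$ as above, and since $a$ is neither horizontal nor vertical this hyperplane projects vertically onto all of $\rr$, whence $\mu$ is constant. Case (iii) is the cleanest: with $a$ horizontal we have $\escpr{a,\ptl_t}=0$, so $h$-minimality gives $\Delta^h_P\pi=0$; thus $\pi$ is $h$-harmonic and bounded on one side, parabolicity makes it constant, $P\subeq\ele_t$ with $\ele_t$ vertical, and completeness upgrades this to $P=\ele_t$.

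I do not expect a deep obstacle, as everything rests on \eqref{eq:laplace-height} and the Liouville property of $h$-parabolic manifolds. The two points needing care are the dimension count $m\in\{2,3\}$ in (i), which I would present as the consistency constraint forced by parabolicity of the limiting flat hypersurface rather than as an extra input, and the passage from $P\subeq\ele_t$ to $P=\ele_t$ for \emph{complete} (as opposed to properly immersed) hypersurfaces, which I would justify by the covering-space argument above instead of by Remark~\ref{re:details}.
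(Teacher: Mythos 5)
Your proposal is correct and follows essentially the same route as the paper: the identity \eqref{eq:laplace-height} specialised to $\nabla h=\mu'(t)\,\ptl_t$, a sign analysis combined with the confinement hypothesis to make the height function $h$-subharmonic and bounded above (or the reverse), the Liouville property of $h$-parabolic manifolds, and the extraction of the remaining conclusions from $\Delta^h_P\pi=0$, with the final reduction of $h$-parabolicity of a horizontal hyperplane to the Riemannian parabolicity of $\rr^{m-1}$. Your covering-space argument for upgrading $P\subeq\ele_t$ to $P=\ele_t$ under the completeness hypothesis is a welcome explicit justification of a step the paper leaves implicit (its Remark on proper immersions does not directly apply to merely complete hypersurfaces).
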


\begin{proof}
Let $a\in\rr^m$ be a unit vector. From equation~\eqref{eq:laplace-height} the height function $\pi(p):=\escpr{p,a}$ satisfies
\[
\Delta^h_P\pi=\escpr{\ovh^h,a}+\escpr{\nabla h,a}=\escpr{\ovh^h,a}+\mu'(t)\,\escpr{\ptl_t,a}.
\]
To prove (i) we take $a=\ptl_t$. Our hypotheses and the Cauchy-Schwarz inequality imply that $\Delta^h_P\pi\geq 0$ and $\pi\leq t_0$ on $P$ (resp. $\Delta^h_P\pi\leq 0$ and $\pi\geq t_0$ on $P$). Since we assume that $P$ is $h$-parabolic then $\pi$ is constant on $P$, i.e., there is a horizontal hyperplane $\ele_t$ such that $P\subeq\ele_t$. From here the equality $\Delta^h_P\pi=0$ leads to the conclusions in the statement. In case $n=m-1$ and $P$ is complete, then $P=\ele_t$. As the weight $e^h$ is constant on $\ele_t$ then the $h$-parabolicity of $\ele_t$ is equivalent to the classical (Riemannian) parabolicity. Finally, the well-known fact that $\rr^{m-1}$ is parabolic if and only if $m\in\{2,3\}$ (which follows for instance from Corollary~\ref{cor:ahlfors}) provides the claim. The proofs of (ii) and (iii) are similar.
\end{proof}

The previous theorem entails the following hyperbolicity result.

\begin{corollary}
\label{cor:3.1}
In $\rr^m=\rr^{m-1}\times\rr$ we consider a weight $e^h$ with $h(x,t):=\mu(t)$ for some $\mu\in C^1(\rr)$. Suppose that $\mu'\geq c$ $($resp. $\mu'\leq -c$$)$ on $\rr$ for some constant $c\geq 0$. Let $P$ be a submanifold in $\rr^m$ with $|\ovh^h|\leq c$ and such that $P\sub\ele^-_{t_0}$ $($resp. $P\sub\ele^+_{t_0}$$)$ for some horizontal hyperplane $\ele_{t_0}$. If equality $|\mu'|=c$ never holds on $\rr$, or $|\ovh^h|<c$ on $P$, or $m\geq 4$ and $P$ is a complete hypersurface, then $P$ is $h$-hyperbolic.
\end{corollary}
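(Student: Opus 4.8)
The plan is to establish the corollary by contraposition, invoking Theorem~\ref{th:half-space3}(i). Observe that the corollary shares all the standing hypotheses of that theorem---the product weight $h(x,t)=\mu(t)$, the bound $\mu'\geq c$ (resp. $\mu'\leq -c$), the curvature control $|\ovh^h|\leq c$, and the confinement $P\sub\ele^-_{t_0}$ (resp. $P\sub\ele^+_{t_0}$) for some horizontal hyperplane---the sole difference being that parabolicity of $P$ is no longer assumed. So I would argue by contradiction: suppose that $P$ were $h$-parabolic. Then $P$ satisfies every assumption of Theorem~\ref{th:half-space3}(i), and we may apply it.

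First I would record the conclusions of Theorem~\ref{th:half-space3}(i) under this parabolicity assumption: the submanifold $P$ is contained in a horizontal hyperplane $\ele_t$ with $\mu'(t)=c$ (resp. $\mu'(t)=-c$), one has $\ovh^h=-c\,\ptl_t$ (resp. $\ovh^h=c\,\ptl_t$) on $P$, and, in addition, if $P$ is a complete hypersurface then $P=\ele_t$ and $m\in\{2,3\}$. The crux is that each of the three alternative hypotheses in the corollary is in direct conflict with one of these outputs. If the equality $|\mu'|=c$ never holds on $\rr$, this contradicts $\mu'(t)=c$ (resp. $\mu'(t)=-c$), since the latter forces $|\mu'(t)|=c$ at that point. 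If $|\ovh^h|<c$ on $P$, this contradicts $\ovh^h=-c\,\ptl_t$ (resp. $\ovh^h=c\,\ptl_t$), which yields $|\ovh^h|=c$ everywhere on $P$. Finally, if $m\geq 4$ and $P$ is a complete hypersurface, this contradicts the conclusion $m\in\{2,3\}$. In every case we reach an absurdity, so $P$ cannot be $h$-parabolic and is therefore $h$-hyperbolic.

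Since the argument is a clean contrapositive of Theorem~\ref{th:half-space3}(i), I do not anticipate a genuine obstacle. The only point requiring care is to verify that each of the three alternatives truly contradicts the corresponding conclusion of the theorem, in both the stated and the parenthetical (``resp.'') formulations; this reduces to the elementary comparisons above, so the proof is essentially immediate once Theorem~\ref{th:half-space3}(i) is in hand.
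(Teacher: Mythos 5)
Your proof is correct and matches the paper's intent exactly: the paper gives no separate argument, merely noting that Corollary~\ref{cor:3.1} is entailed by Theorem~\ref{th:half-space3}, and the contrapositive you spell out (each of the three alternative hypotheses contradicts one of the conclusions $\mu'(t)=\pm c$, $|\ovh^h|=c$, or $m\in\{2,3\}$ that parabolicity would force) is precisely that entailment.
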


In the particular case of $h$-minimal submanifolds we can gain more information from our hyperbolicity criterion in Theorem~\ref{hyperbolicity-sub-model}.

\begin{corollary}
\label{cor:3.2}
In $\rr^m=\rr^{m-1}\times\rr$ we consider a weight $e^h$ with $h(x,t):=\mu(t)$ for some $\mu\in C^1(\rr)$. Suppose that there is a constant $c\geq 0$ such that $\mu'\geq c$ $($resp. $\mu'\leq -c$$)$ on $\rr$. Let $P^n$ be an $h$-minimal submanifold properly immersed in $\rr^m$.
\begin{itemize}
\item[(i)] If $c>0$ and $P\sub\ele^-_{t_0}$ $($resp. $P\sub\ele^+_{t_0}$$)$ for some horizontal hyperplane $\ele_{t_0}$, then $P$ is $h$-hyperbolic.
\item[(ii)] If $c>0$ and $P$ is a hypersurface with $P\sub\ele^-_{t_0}$ $($resp. $P\sub\ele^+_{t_0}$$)$ for some non-vertical hyperplane $\ele_{t_0}$, then $P$ is $h$-hyperbolic. Moreover, if the hypersurface $P$ is contained inside a closed vertical half-space, then $P$ is also $h$-hyperbolic.
\item[(iii)] If $c=0$, $n\geq 3$, and $P$ is contained in the horizontal half-space $\ele^+_0$ $($resp. $\ele^-_0$$)$, then $P$ is $h$-hyperbolic.
\end{itemize}
\end{corollary}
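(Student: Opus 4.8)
The plan is to combine the Liouville characterization of $h$-parabolicity with the formula for the weighted Laplacian of a linear height function established in the proof of Theorem~\ref{th:half-space3}. For a unit vector $a\in\rr^m$ and $\pi(p):=\escpr{p,a}$, equation~\eqref{eq:laplace-height} gives $\Delta^h_P\pi=\escpr{\ovh^h,a}+\escpr{\nabla h,a}$, which for an $h$-minimal $P$ and the weight $h(x,t)=\mu(t)$ becomes $\Delta^h_P\pi=\mu'(t)\,\escpr{\ptl_t,a}$, where $t:=\escpr{p,\ptl_t}$ is the vertical coordinate. The guiding idea is that whenever $c>0$ I can exhibit a non-constant, bounded-above, $h$-subharmonic function on $P$, which by the very definition of $h$-parabolicity forces $P$ to be $h$-hyperbolic; only in the borderline case $c=0$, where such a function is unavailable, will I fall back on the quantitative criterion of Theorem~\ref{hyperbolicity-sub-model}.

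For (i) and the non-vertical part of (ii) I would orient the bounding hyperplane by its upward unit normal $a$, so that $\beta:=\escpr{\ptl_t,a}\in(0,1]$ ($\beta=1$ in the horizontal case of (i)) and $\ele^-_{t_0}$ is the lower half-space. Then $\Delta^h_P\pi=\mu'(t)\,\beta\geq c\,\beta>0$ on $P$, while $P\sub\ele^-_{t_0}$ gives $\pi\leq t_0$. Thus $\pi$ is bounded above and strictly $h$-subharmonic, hence non-constant; if $P$ were $h$-parabolic then $\pi$ would be constant and $\Delta^h_P\pi\equiv0$, a contradiction. The ``resp.'' statements ($\mu'\leq-c$, $P\sub\ele^+_{t_0}$) follow verbatim after replacing $\pi$ by $-\pi$.

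For (iii) I would apply Theorem~\ref{hyperbolicity-sub-model} in $\rr^m$, where $w(t)=t$ and $H(r)=1/r$, taking $\alpha\equiv0$. Since $P$ is $h$-minimal, $\escpr{\ovh^h,\nabla r}=0$, and $P\sub\ele^+_0$ together with $\mu'\geq0$ (the case $c=0$) yields $\escpr{\nabla h,\nabla r}=\mu'(t)\,t/r\geq0=\alpha(r)$, so (A) holds; condition (B) reads $n/r\geq0$ and is automatic. With $f\equiv0$ we have $A_f(S^{n-1}_t)=c_n\,t^{n-1}$, so the integrability condition \eqref{eq:icII} becomes $\int_{t_0}^\infty t^{1-n}\,dt<\infty$, which is exactly where the hypothesis $n\geq3$ enters. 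Hence $M^n_w=\rr^n$ is $f$-hyperbolic and Theorem~\ref{hyperbolicity-sub-model} gives the $h$-hyperbolicity of $P$ (here $P$ is necessarily non-compact, since no compact $h$-minimal submanifold can lie in a horizontal half-space); the other half-space is symmetric.

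The delicate case is the vertical half-space in (ii), where $a$ is horizontal and $\Delta^h_P\pi=\mu'(t)\,\escpr{\ptl_t,a}=0$: the height function is merely $h$-harmonic and bounded, which does not preclude parabolicity. Here I would replace $\pi$ by $\Phi:=\phi(t)$ with $\phi(t):=-\int_t^\infty e^{-\mu(s)}\,ds$, which is well defined and satisfies $\Phi\leq0$ because $\mu'\geq c>0$ forces $\mu(t)\geq\mu(0)+ct$ and hence $e^{-\mu}\in L^1$ at $+\infty$. Using $\Delta^h_P t=\mu'(t)$ and the chain rule, a short computation gives
\[
\Delta^h_P\Phi=\phi''(t)\,|\nabla_P t|^2+\phi'(t)\,\mu'(t)=\mu'(t)\,e^{-\mu(t)}\,\big(1-|\nabla_P t|^2\big)\geq0,
\]
since $|\nabla_P t|=|(\ptl_t)^\top|\leq1$. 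As $\phi$ is strictly increasing, $\Phi$ is constant only if $t$ is constant on $P$, i.e.\ only if $P$ lies in a horizontal hyperplane; but any submanifold contained in such a hyperplane inherits $\escpr{\ovh^h,\ptl_t}=-\mu'(t)\neq0$ and so cannot be $h$-minimal. Therefore $\Phi$ is a non-constant, bounded-above, $h$-subharmonic function, and $P$ is $h$-hyperbolic (the ``resp.'' case uses $-\int_{-\infty}^t e^{-\mu}$ instead). The main obstacle is precisely this subcase: the natural barrier degenerates to an $h$-harmonic function, so the crux is to build the sharper barrier $\Phi$ from the transient tail of $e^{-\mu}$ and to rule out its constancy via the incompatibility of $h$-minimality with horizontal hyperplanes; indeed this same $\Phi$ already proves (i) and (ii) without invoking the half-space hypotheses, which are only genuinely needed in the case $c=0$ of (iii).
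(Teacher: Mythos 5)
Your proposal is correct. For statement (i), the non-vertical part of (ii), and statement (iii) you follow essentially the paper's route: the identity $\Delta^h_P\pi=\escpr{\ovh^h,a}+\mu'(t)\escpr{\ptl_t,a}$ applied to the height function (the paper packages this through Theorem~\ref{th:half-space3} and Corollary~\ref{cor:3.1} and argues by contradiction, while you run the Liouville argument directly, which is cleaner but equivalent), and for (iii) your application of Theorem~\ref{hyperbolicity-sub-model} with $\alpha\equiv0$ and $f\equiv0$ is word-for-word the paper's. The one genuinely different step is the vertical half-space subcase of (ii): the paper first invokes Theorem~\ref{th:half-space3}~(iii) to reduce a hypothetically $h$-parabolic $P$ to a vertical hyperplane $\ele_t$, and then shows $\ele_t$ is $h$-hyperbolic by exhibiting the positive non-constant $h$-superharmonic function $e^{-c\pi}$; you instead build the bounded-above $h$-subharmonic barrier $\Phi=\phi(t)$ with $\phi(t)=-\int_t^\infty e^{-\mu(s)}\,ds$ directly on $P$. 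Your computation $\Delta^h_P\Phi=\mu'(t)\,e^{-\mu(t)}\big(1-|\nabla_Pt|^2\big)\geq0$ is correct, the tail integral converges precisely because $\mu'\geq c>0$, and non-constancy is correctly reduced to the incompatibility of $h$-minimality with containment in a horizontal hyperplane (where $\escpr{\ovh^h,\ptl_t}=-\mu'\neq0$). This buys something real: the barrier $\Phi$ never uses the confinement hypothesis, so it proves the stronger assertion that for $c>0$ \emph{every} $h$-minimal submanifold is $h$-hyperbolic, subsuming (i) and both halves of (ii) in one stroke, whereas the paper's argument is tied to the half-space hypotheses through Theorem~\ref{th:half-space3}. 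Two small points to tighten: in (iii) you assert non-compactness of $P$ only parenthetically, while Theorem~\ref{hyperbolicity-sub-model} requires it; the paper cites Proposition~\ref{prop:cylmin}, and alternatively you can note that on a compact $P$ the subharmonic function $t$ would be constant, making $P$ a compact minimal submanifold of a Euclidean hyperplane with constant weight, which is impossible. Also, in (ii) the choice of the ``upward'' normal fixing which side is $\ele^-_{t_0}$ should be stated as matching the convention of Theorem~\ref{th:half-space3}~(ii); this is an ambiguity already present in the paper's statement, not a flaw in your argument.
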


\begin{proof}
Statement (i) is clear from Corollary~\ref{cor:3.1}. To prove the first part of statement (ii), we suppose that $\ele_{t_0}$ is a non-horizontal hyperplane and we reason by contradiction. By assuming that $P$ is $h$-parabolic we would get from Theorem~\ref{th:half-space3} (ii) that $\mu$ is constant, which contradicts that $\mu'\geq c$ or $\mu'\leq -c$ with $c>0$. If we suppose that $P$ is a hypersurface inside a closed vertical half-space, then we know from Theorem~\ref{th:half-space3} (iii) that $P$ is a vertical hyperplane $\ele_t$. Let us see that $\ele_t$ is $h$-hyperbolic. In the case $\mu'\geq c$ we define the function $v:=e^{-c\pi}$, where $\pi(p):=\escpr{p,\ptl_t}$. An easy computation gives $\Delta^h_P\,v=c\,\big(c-(\mu'\circ\pi)\big)\,v\leq 0$. Since $v$ is a non-constant positive function it follows that $\ele_t$ is $h$-hyperbolic. In the case $\mu'\leq -c$ we can reason in a similar way with the function $v:=e^{c\pi}$. Finally, we prove (iii) by using Theorem~\ref{hyperbolicity-sub-model}. Note that the submanifold $P$ cannot be compact by Proposition~\ref{prop:cylmin}. On the other hand, the $h$-minimality of $P$ together with our hypotheses yield
\[
\escpr{\nabla h,\nabla r}+\escpr{\ovh^h,\nabla r}=\frac{(\mu'\circ\pi)\,\pi}{r}\geq 0, \quad\text{on } P\setminus\{0\}.
\]
Thus the conditions (A) and (B) in the statement of Theorem~\ref{hyperbolicity-sub-model} hold. Moreover, the integrability condition \eqref{eq:icII} is also satisfied since $n\geq 3$. This completes the proof.
\end{proof}

In the special case $\mu(t)=t$, for which the associated $h$-minimal submanifolds are the translating solitons of the mean curvature flow, we can deduce an extension of Corollary~\ref{cor:3.2} (iii) which is a direct consequence of Theorem~\ref{hyperbolicity-sub-model}.

\begin{corollary}
\label{cor:translating}
In $\rr^m=\rr^{m-1}\times\rr$ we consider the weight $e^h$ with $h(x,t):=t$. Let $P^n$ be an $h$-minimal submanifold properly immersed in $\rr^m$, and such that
\[
P\sub\{(x,t)\in\rr^m\,;\,t\geq r\,\alpha(r)\},
\]
where the function $\alpha(t)$ satisfies:
\begin{itemize}
\item[(i)] $\alpha(t)\geq -n/t$, for any $t\geq t_0$,
\item[(ii)] $\int_{t_0}^{\infty}t^{1-n}\exp(-\int_{t_0}^t\alpha(s)\,ds)\,dt<\infty$.
\end{itemize}
Then, $P$ is $h$-hyperbolic.
\end{corollary}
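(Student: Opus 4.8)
The plan is to obtain the conclusion as a direct application of Theorem~\ref{hyperbolicity-sub-model}, using the very function $\alpha$ supplied in the statement as the comparison function there. I regard $\rr^m$ as the model space $M^m_w$ with warping function $w(t)=t$, so that by \eqref{eq:mcspheres} we have $H(t)=1/t$. The first preliminary step is to check that $P$ is non-compact, so that Theorem~\ref{hyperbolicity-sub-model} is applicable: since $h(x,t)=t$ is of the form $\mu(y)$ with $y$ the vertical variable, Proposition~\ref{prop:cylmin} (with $k=m-1$, $\mu(t)=t$, so that $m-k=1\leq n$) rules out compact $h$-minimal submanifolds for this weight. Equivalently, \eqref{eq:laplace-height} gives $\Delta^h_P\,\pi=1$ for the height function $\pi$, which is incompatible with compactness.

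Next I would verify hypothesis (A). Because $h(x,t)=t$ we have $\nabla h=\ptl_t$, and the $h$-minimality of $P$ gives $\ovh^h=0$. Writing $\pi(p):=\escpr{p,\ptl_t}$ for the height and using $\nabla r=p/r$ away from the pole, I compute
\[
\escpr{\nabla h,\nabla r}+\escpr{\ovh^h,\nabla r}=\escpr{\ptl_t,\nabla r}=\frac{\pi}{r},\qquad\text{on }P\setminus\{o\}.
\]
The containment hypothesis $P\sub\{(x,t)\,;\,t\geq r\,\alpha(r)\}$ says exactly that $\pi\geq r\,\alpha(r)$ on $P$, hence $\pi/r\geq\alpha(r)$ on $P\setminus D_{t_0}$, where $r\geq t_0>0$; this is hypothesis (A). Hypothesis (B) then reads $nH(r)+\alpha(r)=n/r+\alpha(r)\geq 0$, which is precisely condition (i).

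It remains to recognize condition (ii) as the integrability hypothesis \eqref{eq:icII} for the comparison weight $e^{f(r)}$ with $f(t):=\int_{t_0}^t\alpha(s)\,ds$. Applying \eqref{eq:ahsphere} in $M^n_w$ with $w(t)=t$ yields $A_f(S^{n-1}_t)=c_n\,t^{n-1}\,e^{f(t)}$, so that
\[
\int_{t_0}^\infty\frac{dt}{A_f(S^{n-1}_t)}=c_n^{-1}\int_{t_0}^\infty t^{1-n}\exp\Big(-\int_{t_0}^t\alpha(s)\,ds\Big)\,dt,
\]
which is finite if and only if (ii) holds. With (A), (B), and \eqref{eq:icII} all in place, Theorem~\ref{hyperbolicity-sub-model} immediately gives that $P$ is $h$-hyperbolic.

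The calculation is essentially routine, and I do not expect a serious analytic obstacle; the content lies in matching the hypotheses. The crux is the translation of the geometric barrier $t\geq r\,\alpha(r)$ into the radial inequality (A): one must notice that the confining region has been engineered so that $\pi/r\geq\alpha(r)$ reproduces exactly the lower bound on the radial component of $\nabla h$ along $P$ that Theorem~\ref{hyperbolicity-sub-model} requires, after which conditions (i) and (ii) line up verbatim with (B) and \eqref{eq:icII}.
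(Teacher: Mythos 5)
Your proposal is correct and is essentially the paper's own argument: the corollary is stated there as a direct consequence of Theorem~\ref{hyperbolicity-sub-model}, with the barrier $t\geq r\,\alpha(r)$ translating (via $\escpr{\nabla h,\nabla r}=\pi/r$ and $\ovh^h=0$) into hypothesis (A), condition (i) giving (B), condition (ii) giving \eqref{eq:icII}, and non-compactness ruled out exactly as you do via Proposition~\ref{prop:cylmin} (or the identity $\Delta^h_P\pi=1$). Nothing to add.
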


\begin{example}
This hyperbolicity criterion is valid when $P^n$ is contained in a horizontal half-space $\ele^+_{t_0}$ with $t_0>2-n$. In particular some examples of complete translating solitons, like the grim hyperplane, the translating paraboloid and the translating catenoid (see \cite[Sect.~2]{css} and \cite[Sect.~2.2]{martin} for more precise descriptions) are $h$-hyperbolic. The criterion also applies when $P^n\sub\{(x,t)\in\rr^m\,;\,t\geq r^{k+1}\}$ provided $n\geq 3$ and $k>-1$.
\end{example}

We finish this section with some comments about the existence of compact submanifolds of bounded $h$-mean curvature. 

\begin{remark}
Take a weight $e^h$ in $\rr^m=\rr^{m-1}\times\rr$ with $h(x,t):=\mu(t)$. From Theorem~\ref{th:half-space3} (i) we know that, if $\mu'\geq c$ or $\mu'\leq -c$ for some constant $c\geq 0$, then a hypersurface $P$ such that $|\ovh^h|\leq c$ cannot be compact. Indeed, by using Theorem~\ref{th:half-space3} (iii) or Proposition~\ref{prop:cylmin}, it follows that there are no compact $h$-minimal hypersurfaces (in this case no further assumption on $\mu'$ is needed). Recently, L\'opez~\cite[Thm.~4.1]{rafa} has shown non-existence of compact surfaces in $\rr^3$ with constant $h$-mean curvature for $h(x,t)=t$. His argument can be extended to $\rr^m$ and any function $h(x,t)=\mu(t)$ such that $\mu'$ never vanishes. Therefore, for such weights, if a solution to the isoperimetric problem of minimizing the weighted area in \eqref{eq:volarea} for fixed weighted volume exists, then it cannot be compact. 
\end{remark}

\subsection{A Bernstein-type result}
\label{subsec:bernstein}
\noindent

Here we characterize entire horizontal graphs in $\rr^m=\rr^{m-1}\times\rr$ having constant mean curvature with respect to suitable weights. It is well known that, in the unweighted context, the unique solutions when $m\leq 7$ are minimal hyperplanes, see \cite[Ch.~17]{giusti}. Motivated by this fact we seek weights for which the solutions are also hyperplanes. This leads us to consider a weight $e^h$ with $h(x,t):=\eta(x)+\mu(t)$, for some functions $\eta\in C^1(\rr^{m-1})$ and $\mu\in C^1(\rr)$. In this situation, equation \eqref{eq:fmchyp} implies that any horizontal hyperplane $\ele_t$ has constant $h$-mean curvature $H^h_{\ele_t}=-\mu'(t)$.
 
Our main result provides some restrictions on $\eta$ and $\mu$ ensuring that the unique solutions to the Bernstein problem in any dimension are the horizontal hyperplanes.

\begin{theorem}
\label{th:bernstein}
In $\rr^{m}=\rr^{m-1}\times\rr$ we take a weight $e^h$ of the form $h(x,t):=\eta(x)+\mu(t)$, for some functions $\eta\in C^2(\rr^{m-1})$ and $\mu\in C^2(\rr)$ such that:
\begin{itemize}
\item[(i)] there is a continuous function $\beta:\rr^+\to\rr$ such that $\escpr{\nabla h,\nabla r}\leq\beta(r)$ on $\rr^m\setminus\{0\}$ and $\beta(s)\to -\infty$ when $s\to\infty$,
\item[(ii)] there is a constant $c\in\rr$ such that $(\emph{Hess}\,\eta)_x(X,X)\leq c\leq\mu''(t)$, for any $x,X\in\rr^{m-1}$ with $|X|=1$, and any $t\in\rr$.
\end{itemize}
If $P$ is a graph $t=\var(x)$ where $\var\in C^3(\rr^{m-1})$, and $P$ has constant $h$-mean curvature, then $P$ is a hyperplane. Moreover, if some of the inequalities in \emph{(ii)} is always strict, then $P$ coincides with some horizontal hyperplane $\ele_t$. 
\end{theorem}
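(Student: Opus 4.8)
The plan is to follow the scheme announced in the Introduction: first invoke the parabolicity criteria of Section~\ref{sec:criteria} to show that $P$ is $h$-parabolic, then compute the weighted Laplacian of the angle function and show it has a sign, and finally apply the Liouville property together with the equality case.

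\emph{Step 1 (parabolicity).} Being an entire graph over $\rr^{m-1}$, the hypersurface $P$ is properly embedded and non-compact, and since it has constant $h$-mean curvature it certainly has bounded $h$-mean curvature. Viewing $\rr^m$ as the model space $M^m_w$ with $w(t)=t$, we have $w\notin L^1(0,\infty)$ and $H(t)=1/t$ bounded at infinity, while hypothesis (i) is exactly the radial bound required in Corollary~\ref{cor:useful}. I would therefore apply that corollary to conclude that $P$ is $h$-parabolic.

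\emph{Step 2 (the key computation).} Orient $P$ by the downward unit normal $N$, so that $\theta:=\escpr{N,\ptl_t}$ satisfies $-1\leq\theta<0$, and note $\theta\in C^2(P)$ because $\var\in C^3$. Writing $A$ for the shape operator of $P$ (with $|A|^2$ the squared norm of the second fundamental form) and using that $\rr^m$ is flat, the Codazzi equation gives the Simons-type identity $\Delta_P N=-\nabla_P(nH_P)-|A|^2\,N$, whence $\Delta_P\theta=-\escpr{\nabla_P(nH_P),\ptl_t}-|A|^2\,\theta$. Now $nH_P=H^h_P+\escpr{\nabla h,N}$ by \eqref{eq:fmc}, and $H^h_P$ is constant, so differentiating along $\ptl_t^\top$ produces $\escpr{\nabla_P(nH_P),\ptl_t}=\text{Hess}\,h(\ptl_t^\top,N)-\escpr{(\nabla h)^\top,A(\ptl_t^\top)}$. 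On the other hand $\nabla_P\theta=-A(\ptl_t^\top)$, so the weight term in \eqref{weighted-Laplacian} equals $\escpr{\nabla_P h,\nabla_P\theta}=-\escpr{(\nabla h)^\top,A(\ptl_t^\top)}$. When these are combined the two copies of $\escpr{(\nabla h)^\top,A(\ptl_t^\top)}$ cancel, leaving
\[
\Delta^h_P\theta=-\text{Hess}\,h(\ptl_t^\top,N)-|A|^2\,\theta.
\]
Decomposing $\ptl_t^\top=\ptl_t-\theta\,N$, using that for $h=\eta(x)+\mu(t)$ the Euclidean Hessian is block-diagonal, and writing $N_x$ for the horizontal part of $N$ (so that $|N_x|^2=1-\theta^2$), this simplifies to
\[
\Delta^h_P\theta=-\theta\,\big(\mu''(t)\,|N_x|^2-\text{Hess}\,\eta(N_x,N_x)+|A|^2\big).
\]

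\emph{Step 3 (conclusion).} Hypothesis (ii) gives $\text{Hess}\,\eta(N_x,N_x)\leq c\,|N_x|^2\leq\mu''(t)\,|N_x|^2$, so the bracket above is nonnegative; since $\theta<0$ we obtain $\Delta^h_P\theta\geq0$. Thus $\theta$ is a bounded $h$-subharmonic function on the $h$-parabolic manifold $P$, hence constant. Feeding $\Delta^h_P\theta=0$ and $\theta\neq0$ back into the displayed identity forces both nonnegative summands to vanish, so $|A|^2=0$ and $P$ is totally geodesic, i.e.\ a hyperplane, and moreover $\mu''(t)\,|N_x|^2=\text{Hess}\,\eta(N_x,N_x)$. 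For the last assertion, write $0=(\mu''(t)-c)\,|N_x|^2+\big(c\,|N_x|^2-\text{Hess}\,\eta(N_x,N_x)\big)$ as a sum of two nonnegative terms; a strict inequality in (ii) then forces $|N_x|^2=0$, and since $N$ is now a constant vector this means $N=\pm\ptl_t$, i.e.\ $P=\ele_t$ is a horizontal hyperplane.

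\emph{Main obstacle.} The delicate point is the computation in Step~2: one must correctly set up the Codazzi/Simons identity for the Gauss map and, crucially, verify that the first-order terms $\escpr{(\nabla h)^\top,A(\ptl_t^\top)}$ coming from the constancy of $H^h_P$ and from the weight correction cancel exactly. It is precisely the constant $h$-mean curvature hypothesis that triggers this cancellation and reduces $\Delta^h_P\theta$ to the pointwise curvature expression controlled by hypothesis (ii); without it the sign of $\Delta^h_P\theta$ could not be determined.
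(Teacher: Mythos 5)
Your argument is correct and follows the same overall strategy as the paper (parabolicity of $P$ via Corollary~\ref{cor:useful}, a sign for $\Delta^h_P\theta$ where $\theta=\escpr{N,\ptl_t}$, then the Liouville property and the equality analysis), but you derive the key identity for $\Delta^h_P\theta$ by a genuinely different route. The paper obtains \eqref{eq:laplaceangle} by applying the variational formula $\tfrac{d}{ds}\big|_{s=0}H^h_{P_s}\circ\tau_s=\Delta^h_P\theta+\big(\text{Ric}_h(N,N)+|\sg|^2\big)\,\theta$ of \cite[Eq.~(3.5)]{castro-rosales} to the one-parameter group of vertical translations $\tau_s$, whose normal component is precisely $\theta$, and then evaluates the left-hand side directly from \eqref{eq:fmc}. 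You instead compute $\Delta_P\theta$ by hand from the Codazzi/Simons identity for the Gauss map, use the constancy of $H^h_P$ to trade $\nabla_P(nH_P)$ for derivatives of $\escpr{\nabla h,N}$, and check that the resulting term $\escpr{(\nabla h)^\top,A(\ptl_t^\top)}$ cancels exactly against the drift term $\escpr{\nabla_Ph,\nabla_P\theta}=-\escpr{(\nabla h)^\top,A(\ptl_t^\top)}$. The two computations agree: your expression $-\theta\big(\mu''\,|N_x|^2-\text{Hess}\,\eta(N_x,N_x)+|A|^2\big)$ is \eqref{eq:laplaceangle} rewritten via $|N_x|^2=1-\theta^2$, and the remaining steps (nonnegativity from (ii), constancy of $\theta$, vanishing of $|A|^2$, and the strict case forcing $N_x=0$ and hence $N=-\ptl_t$) coincide with the paper's. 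Your version is more self-contained, at the price of verifying the Codazzi cancellation explicitly; the paper's version makes transparent that the relevant operator is the Jacobi operator of the translation field, which is why hypothesis (ii) appears through $\text{Ric}_h(N,N)$ and why the argument transfers to other ambient Killing directions.
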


\begin{proof}
Let $P$ be a graph as in the statement. By continuity of $\var$ the hypersurface $P$ is a closed subset of $\rr^m$ homeomorphic to $\rr^{m-1}$. In particular, $P$ is a non-compact and properly embedded hypersurface in $\rr^{m}$. By using hypothesis (i) and the fact that $P$ has constant $h$-mean curvature, it follows from Corollary~\ref{cor:useful} that $P$ is $h$-parabolic. On the other hand, since $P$ is the graph $t=\var(x)$, we can define the unit normal
\[
N:=\frac{(\nabla\var,-1)}{\sqrt{1+|\nabla\var|^2}},
\]
for which the associated angle function $\theta:=\escpr{N,\ptl_t}$ satisfies $\theta<0$ on $P$. We denote by $n$ the projection of $N$ onto $\rr^{m-1}$. Hence we have $N=(n,\theta)$, so that $|n|^2+\theta^2=1$.

Note that $\theta\in C^2(P)$. Let us see that its weighted Laplacian is given by  
\begin{equation}
\label{eq:laplaceangle}
\Delta^h_{P}\,\theta=\left\{(\text{Hess}\,\eta)(n,n)+(\mu''\circ\pi)\,(\theta^2-1)-|\sg|^2\right\}\theta,
\end{equation}
where $\pi(x,t):=t$ is the vertical height function and $\sg$ is the Euclidean second fundamental form of $P$ with respect to $N$.

To prove \eqref{eq:laplaceangle} consider the group $\{\tau_s\}_{s\in\rr}$ of the vertical translations in $\rr^{m}$. Denote $P_s:=\tau_s(P)$ and define the unit normal $N_s$ on $P_s$ by $N_s(\tau_s(p)):=N(p)$, for any $p\in P$. By taking into account that $P$ has constant $h$-mean curvature we have the following identity 
\[
\frac{d}{ds}\bigg|_{s=0}H^h_{P_s}\circ\tau_s=\Delta^h_{P}\,\theta+\big(\text{Ric}_h(N,N)+|\sg|^2\big)\,\theta,
\]
see \cite[Eq.~(3.5)]{castro-rosales} and the references therein. Here $\ric_h$ is the Bakry-\'Emery-Ricci tensor defined by $\ric_h:=-\text{Hess}\,h$. Take $p=(x,t)\in P$. Having in mind \eqref{eq:fmc} and that any $\tau_s$ is an isometry, we get
\begin{align*}
H^h_{P_s}(\tau_s(p))&=\big((m-1)\,H_{P_s}-\escpr{\nabla h,N_s}\big)(\tau_s(p))=(m-1)\,H_P(p)-\escpr{(\nabla h)(x,t+s),N(p)}
\\
&=(m-1)\,H_P(p)-\escpr{\big((\nabla\eta)(x),\mu'(t+s)\big),N(p)}.
\end{align*}
As a consequence
\[
\big(\Delta^h_{P}\,\theta+(\text{Ric}_h(N,N)+|\sg|^2)\,\theta\big)(p)=-\escpr{\big(0,\mu''(t)\big),N(p)}=-(\mu''\circ\pi)(p)\,\theta(p).
\]
On the other hand, it is straightforward to check that
\[
\ric_h(N,N)=-(\text{Hess}\,\eta)(n,n)-(\mu''\circ\pi)\,\theta^2.
\]
By substituting this equality into the previous one, and simplifying, we obtain \eqref{eq:laplaceangle}.

At this point, hypothesis (ii) and the fact that $-1\leq\theta<0$ yield
\[
\Delta^h_P\,\theta\geq\big(c\,|n|^2+c\,\theta^2-c-|\sigma|^2\big)\,\theta=-|\sg|^2\,\theta\geq 0.
\]
Thus, since $P$ is $h$-parabolic, we deduce that $\theta$ is constant on $P$. Moreover, the equality $\Delta^h_P\,\theta=0$ gives us $(\text{Hess}\,\eta)(n,n)=c\,|n|^2$, $(\mu''\circ\pi)\,(\theta^2-1)=c\,(\theta^2-1)$ and $|\sg|^2=0$ on $P$. In particular $P$ is a hyperplane. Moreover, if one of the inequalities in (ii) is always strict then $n=0$ or $\theta=-1$ on $P$. In both cases $N=-\ptl_t$ on $P$, so that $P$ is a horizontal hyperplane $\ele_t$.
\end{proof}

\begin{example}
Consider a perturbation $e^h$ of the Gaussian weight with $h:=-r^2/2+g$. From \eqref{eq:fmchyp} it is not difficult to see that all the horizontal hyperplanes have constant $h$-mean curvature if and only if $g(x,t)=\xi(x)+\rho(t)$, for some $\xi:\rr^{m-1}\to\rr$ and $\rho:\rr\to\rr$. If we assume that:
\begin{itemize}
\item[(i)] $\xi$ is a concave $C^2$ function with $\escpr{(\nabla\xi)(x),x}\leq \delta_1|x|$, for some constant $\delta_1\geq 0$,
\item[(ii)] $\rho$ is a convex $C^2$ function with $t\,\rho'(t)\leq\delta_2\,|t|$, for some constant $\delta_2\geq 0$,
\end{itemize}
then we can apply Theorem~\ref{th:bernstein} to conclude that any entire $C^3$ horizontal graph $P$ with constant $h$-mean curvature is a hyperplane. Moreover, is $\xi$ is strictly concave or $\rho$ is not an affine function, then $P$ equals a horizontal hyperplane $\ele_t$. In general the solutions to the Bernstein problem need not be horizontal hyperplanes. Indeed, in the Gaussian setting $h=-r^2/2$ any non-vertical hyperplane is a solution.
\end{example}

\begin{remark}
The Bernstein problem for self-shrinker hypersurfaces was previously solved by Ecker and Huisken~\cite[App.]{ecker-huisken}, who assumed polynomial growth, and by Wang~\cite[Thm.~1.1]{wang} for arbitrary ones. For entire graphs of constant mean curvature in Gauss space the problem was treated by Cheng and Wei~\cite[Thm.~1.3]{cheng-wei}, who studied the Gauss map of properly immersed $\lambda$-hypersurfaces. Recently Doan~\cite{doan-bernstein2} has given another proof based on the fact that hyperplanes minimize the weighted area in Gauss space among hypersurfaces of the same weighted volume.
\end{remark}

\subsection{Strongly weighted stable hypersurfaces}
\label{subsec:stable}
\noindent

In the previous sections we have characterized hypersurfaces contained in certain regions (balls, cylinders and half-spaces) or having a special form (entire graphs). We finish this work by studying hypersurfaces that are \emph{stable} in a weighted sense that we precise below.

Consider a model space $M^m_w$ with a $C^2$ weight $e^h$, and a two-sided hypersurface $P$ immersed in $M_w$ with unit normal $N$. As we pointed out after equation \eqref{eq:fmc}, the hypersurface $P$ has constant $h$-mean curvature $H^h_P$ if and only if $P$ is a critical point of the weighted area $A_h$ in \eqref{eq:volarea} under variations for which the weighted volume $V_h$ is constant. This is also equivalent to that $(A_h-H^h_P\,V_h)'(0)=0$, for any compactly supported variation of $P$, see \cite[Prop.~3.2]{rcbm} and \cite[Cor.~3.3]{castro-rosales}. If, in addition, we have that $(A_h-H^h_P\,V_h)''(0)\geq 0$ for any compactly supported variation, then we say that $P$ is \emph{strongly $h$-stable}. From the second variation formulas in \cite[Prop.~3.6]{rcbm} and \cite[Prop.~3.5]{castro-rosales}, this is equivalent to the inequality
\[
Q_h(u,u)\geq 0,\quad\text{for any } u\in C^\infty_0(P).
\]
Here $Q_h$ is the \emph{$h$-index form} of $P$, which is defined by
\[
Q_h(u,u):=\int_P\left\{|\nabla_Pu|^2-\big(\ric_h(N,N)+|\sg|^2\big)\,u^2\right\}da_h,
\]
where $\ric_h:=\ric-\text{Hess}\,h$ is the \emph{Bakry-\'Emery-Ricci tensor}, and $|\sg|^2$ is the squared norm of the second fundamental form of $P$. 

In \cite[Thm.~3.1]{espinar}, Espinar employed gradient Schr\"odinger operators to show the following rigidity principle: if $\ric_h\geq 0$, then a complete $h$-parabolic and strongly $h$-stable hypersurface $P$ is totally geodesic and $\ric_h(N,N)=0$ on $P$. This result is also a consequence of Theorem~\ref{th:parchar} (iv). Indeed, if $\{\var_k\}_{k\in\nn}$ is a sequence as in the theorem, then the stability inequality above yields
\[
\int_P\big(\ric_h(N,N)+|\sg|^2\big)\,\var_k^2\,da_h\leq\int_P|\nabla_P\var_k|^2\,da_h, \quad\text{for any }k\in\nn.
\]
Hence, by passing to the limit and using Fatou's lemma, we get $\int_P\,(\ric_h(N,N)+|\sg|^2)\,da_h=0$, which proves the claim.

By using the rigidity principle Espinar showed in \cite[Sect.~4.1]{espinar} non-existence of complete $h$-parabolic and strongly $h$-stable self-shrinkers or translating solitons of the mean curvature flow. Indeed, this non-existence result is also valid for any weight $e^h$ in $\rr^m=\rr^{m-1}\times\rr$ where $h(x,t):=\mu(t)$ and $\mu$ is any $C^2$ concave function with $\mu'\geq c$ or $\mu'\leq- c$ for some $c>0$. To see this, note that such a weight satisfies $\ric_h(X,X)=-(\mu''\circ\pi)\,\escpr{\ptl_t,X}^2\geq 0$. Hence, a complete $h$-parabolic and strongly $h$-stable hypersurface must be a hyperplane. Since by \eqref{eq:fmchyp} the unique $h$-minimal hyperplanes are the vertical ones, and these are $h$-hyperbolic by Corollary~\ref{cor:3.2} (ii), the claim follows.

As a consequence of the rigidity principle, if we have a weight $e^h$ in $M^m_w$ such that $\ric_h\geq 0$ and all the hypersurfaces with constant $h$-mean curvature in a certain family are $h$-parabolic, then any strongly $h$-stable hypersurface in that family will be totally geodesic with $\ric_h(N,N)=0$. Thus, our parabolicity criteria in Section~\ref{sec:criteria} lead to characterization results for strongly $h$-stable hypersurfaces. For instance, from Corollary~\ref{cor:useful} we infer this fact.

\begin{theorem}
\label{cor:stable1}
Let $M^m_w$ be a model space such that $w\notin L^1(0,\infty)$ and the function $H$ in \eqref{eq:mcspheres} is bounded at infinity. Consider a $C^2$ weight $e^h$ such that $\emph{Ric}_h\geq 0$ and $\escpr{\nabla h,\nabla r}\leq\beta(r)$ on $M_w\setminus\{o\}$, for some continuous function $\beta:\rr^+\to\rr$ with $\beta(t)\to-\infty$ when $t\to\infty$. Then, any strongly $h$-stable hypersurface properly immersed in $M_w$ is totally geodesic and satisfies $\emph{Ric}_h(N,N)=0$.
\end{theorem}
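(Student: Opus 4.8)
The plan is to combine the parabolicity criterion of Corollary~\ref{cor:useful} with the rigidity principle for $h$-parabolic strongly $h$-stable hypersurfaces established immediately before the statement. First I would recall that, by definition, a strongly $h$-stable hypersurface $P$ has constant $h$-mean curvature $H^h_P$. As noted after \eqref{eq:fmc}, for a two-sided hypersurface we have $\ovh^h=H^h_P\,N$, so that $|\ovh^h|=|H^h_P|$ is constant; in particular $P$ has bounded $h$-mean curvature.

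Next I would show that $P$ is $h$-parabolic. If $P$ is compact this is immediate, since compact submanifolds are always $h$-parabolic. If $P$ is non-compact, I would simply verify that all the hypotheses of Corollary~\ref{cor:useful} are met: the model space satisfies $w\notin L^1(0,\infty)$ and the function $H$ in \eqref{eq:mcspheres} is bounded at infinity, the submanifold $P$ has bounded $h$-mean curvature by the previous step, and by assumption $\escpr{\nabla h,\nabla r}\leq\beta(r)$ on $M_w\setminus\{o\}$ with $\beta(t)\to-\infty$ when $t\to\infty$. Corollary~\ref{cor:useful} then yields directly the $h$-parabolicity of $P$.

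Finally, since $P$ is properly immersed in the complete manifold $M_w$, the induced metric on $P$ is complete: closed bounded subsets of $P$ have compact image inside some closed metric ball $\overline{B}_R$, hence lie in the compact preimage of $\overline{B}_R$, so every intrinsic Cauchy sequence converges. Thus $P$ is a complete, $h$-parabolic, strongly $h$-stable hypersurface in a weighted model space with $\ric_h\geq 0$, and the rigidity principle established before the statement gives at once that $P$ is totally geodesic and that $\ric_h(N,N)=0$ on $P$.

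I expect no serious obstacle in this argument, which is essentially a matter of checking that the two quoted results apply. The only mild points requiring care are the separate treatment of the compact case (where Corollary~\ref{cor:useful}, stated for non-compact submanifolds, does not apply directly but parabolicity is trivial) and the deduction of metric completeness of $P$ from the proper immersion hypothesis.
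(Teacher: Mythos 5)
Your proposal is correct and follows exactly the route the paper intends: the paper derives this theorem by combining the rigidity principle for complete $h$-parabolic strongly $h$-stable hypersurfaces (established just before the statement via Theorem~\ref{th:parchar}~(iv)) with the parabolicity criterion of Corollary~\ref{cor:useful}, which applies since a strongly $h$-stable hypersurface has constant, hence bounded, $h$-mean curvature. Your explicit handling of the compact case and of the completeness of $P$ coming from properness are details the paper leaves implicit, but they are exactly the right ones to check.
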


The previous statement may be used to deduce non-existence of strongly $h$-stable hypersurfaces. In this direction we can derive the following consequence in Euclidean space.

\begin{corollary}
\label{cor:eustable}
In $\rr^m$ we consider a weight $e^h$ where $h:=f(r)+g$, for some functions $f(r),g\in C^2(\rr^m)$. If we suppose that
\begin{itemize}
\item[(i)] $f$ is concave with $f'(t)\to-\infty$ when $t\to\infty$,
\item[(ii)] $g$ is concave and there is a constant $\delta\in\rr$ such that $\escpr{\nabla g,\nabla r}\leq\delta$ on $\rr^m\setminus\{0\}$,
\end{itemize} 
then, there are no strongly $h$-stable hypersurfaces properly immersed in $\rr^m$. 
\end{corollary}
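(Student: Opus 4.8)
The plan is to apply the rigidity result in Theorem~\ref{cor:stable1} and then rule out the resulting totally geodesic hypersurfaces by a direct curvature computation, arguing by contradiction. First I would observe that $\rr^m$ is the model space with warping function $w(t)=t$, for which $w\notin L^1(0,\infty)$ and the mean curvature function $H(t)=1/t$ in \eqref{eq:mcspheres} is bounded at infinity; thus the structural hypotheses of Theorem~\ref{cor:stable1} on $M_w$ are automatically satisfied.

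Next I would verify the two analytic hypotheses of Theorem~\ref{cor:stable1}. Since $\rr^m$ is flat we have $\ric_h=-\text{Hess}\,h$, so $\ric_h\geq 0$ is equivalent to the concavity of $h=f(r)+g$. The term $g$ is concave by assumption (ii). For the radial term, I would use the Hessian formula \eqref{eq:hessr} with $H(r)=1/r$ to write, for a unit vector $X$,
\[
(\text{Hess}\,f(r))(X,X)=f''(r)\,\escpr{\nabla r,X}^2+\frac{f'(r)}{r}\,\big(1-\escpr{\nabla r,X}^2\big).
\]
Here $f''\leq 0$ by concavity of $f$, while $f'\leq 0$ follows from concavity together with $f'(0)=0$ (the latter being forced by the $C^2$-regularity of $p\mapsto f(r(p))$ at the pole); as $1-\escpr{\nabla r,X}^2\geq 0$, both terms are nonpositive and $f(r)$ is concave. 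Hence $h$ is concave and $\ric_h\geq 0$. For the gradient condition, $\escpr{\nabla h,\nabla r}=f'(r)+\escpr{\nabla g,\nabla r}\leq f'(r)+\delta=:\beta(r)$ by (ii), where $\beta$ is continuous with $\beta(t)\to-\infty$ since $f'(t)\to-\infty$. Thus Theorem~\ref{cor:stable1} applies.

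If a strongly $h$-stable hypersurface $P$ existed, Theorem~\ref{cor:stable1} would force $P$ to be totally geodesic with $\ric_h(N,N)=0$ everywhere. A connected totally geodesic hypersurface of $\rr^m$ lies in an affine hyperplane, so properness together with Remark~\ref{re:details} yields that $P$ is a full affine hyperplane with constant unit normal $N=a$. The main obstacle, and the crux of the argument, is to contradict $\ric_h(N,N)\equiv 0$ on such a hyperplane. Writing $\ric_h(a,a)=-(\text{Hess}\,f(r))(a,a)-(\text{Hess}\,g)(a,a)$, each summand is nonnegative, so I would concentrate on the tangential part $-\frac{f'(r)}{r}\big(1-\escpr{\nabla r,a}^2\big)$ of the first summand. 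Since $P$ is an unbounded hyperplane it contains points with $r$ arbitrarily large, where $f'(r)<0$ strictly because $f'(t)\to-\infty$; and at all but at most one point of the hyperplane (the foot of the perpendicular from the pole) one has $a\neq\pm\nabla r$, so that $1-\escpr{\nabla r,a}^2>0$. At such a far point the tangential term is strictly positive, whence $\ric_h(a,a)>0$, contradicting $\ric_h(N,N)=0$. This contradiction shows that no strongly $h$-stable hypersurface can exist.
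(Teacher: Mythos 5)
Your proof is correct and takes essentially the same route as the paper: verify $\ric_h\geq 0$ via the Hessian formula \eqref{eq:hessr} applied to $f(r)$ together with the concavity of $g$, invoke Theorem~\ref{cor:stable1} to force a totally geodesic hypersurface with $\ric_h(N,N)=0$, and then contradict this at far points of the resulting hyperplane using the tangential term $\frac{f'(r)}{r}\big(1-\escpr{\nabla r,N}^2\big)$ and $f'(t)\to-\infty$. You merely spell out details the paper leaves implicit (the hypotheses $w\notin L^1$, $H$ bounded, the choice $\beta(r)=f'(r)+\delta$, and why $f'\leq 0$), so nothing further is needed.
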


\begin{proof}
For a weight $e^h$ as in the statement it is clear that $\ric_h=-\text{Hess}\,f(r)-\text{Hess}\,g$. From the chain rule and equation \eqref{eq:hessr}, we get in $\rr^m\setminus\{0\}$ the identities
\begin{align*}
\big(\text{Hess}\,f(r)\big)(X,X)&=f''(r)\,\escpr{\nabla r,X}^2+f'(r)\,(\text{Hess}\,r)(X,X)
\\
&=f''(r)\,\escpr{\nabla r,X}^2+\frac{f'(r)}{r}\,\big(|X|^2-\escpr{\nabla r,X}^2\big).
\end{align*}
These computations show that $\ric_h\geq 0$ since $f$ and $g$ are concave. Thus, we can apply Theorem~\ref{cor:stable1} to deduce that, if a strongly $h$-stable hypersurface $P$ properly immersed in $\rr^m$ exists, then it must be a hyperplane where $\ric_h(N,N)=0$. In particular, we obtain that $\big(\text{Hess}\,f(r)\big)(N,N)=0$ on $P$, which is not possible by the last equation because $f'(t)\to-\infty$ when $t\to\infty$.
\end{proof}

\begin{example}
The corollary applies for weights $e^h$ with $h:=f(r)$, where $f$ is concave and $f'(t)\to-\infty$ when $t\to\infty$. This is the case of $f(t):=a\,t^k$ with $a<0$ and $k>1$, which includes the Gaussian weight. So, we generalize previous results of Colding and Minicozzi~\cite[Thm.~0.5]{cm2} and Espinar~\cite[Thm.~4.1]{espinar} for self-shrinker hypersurfaces. The corollary also holds for Gaussian perturbations $h=-r^2/2+g$ where $g$ is a concave function with bounded gradient.
\end{example}

\begin{remark}
An embedded two-sided hypersurface $P\sub M^m_w$ is \emph{weighted area-minimizing} if any relatively compact domain $P'\subeq P$ minimizes the area functional $A_h$ in \eqref{eq:volarea} among hypersurfaces with the same boundary. In particular, $P$ is strongly $h$-stable with $H^h_P=0$. So, the results in this section entail characterization and non-existence of properly embedded weighted area-minimizing hypersurfaces. In Gauss space this was done by Ca\~nete, Miranda and Vittone~\cite[Re.~2.7]{cvm}.
\end{remark}

\providecommand{\bysame}{\leavevmode\hbox to3em{\hrulefill}\thinspace}
\providecommand{\MR}{\relax\ifhmode\unskip\space\fi MR }
\providecommand{\MRhref}[2]{
  \href{http://www.ams.org/mathscinet-getitem?mr=#1}{#2}
}
\providecommand{\href}[2]{#2}


\begin{thebibliography}{10}

\bibitem{alencar-rocha}
H.~Alencar and A.~Rocha, \emph{Stability and geometric properties of constant
  weighted mean curvature hypersurfaces in gradient {R}icci solitons}, Ann.
  Global Anal. Geom. \textbf{53} (2018), no.~4, 561--581. \MR{3803340}

\bibitem{baoshi}
C.~Bao and Y.~Shi, \emph{Gauss maps of translating solitons of mean curvature
  flow}, Proc. Amer. Math. Soc. \textbf{142} (2014), no.~12, 4333--4339.
  \MR{3267001}

\bibitem{bayle-thesis}
V.~Bayle, \emph{Propri\'et\'es de concavit\'e du profil isop\'erim\'etrique et
  applications}, Ph.D. thesis, Institut Fourier (Grenoble), 2003.

\bibitem{brendle}
S.~Brendle, \emph{Embedded self-similar shrinkers of genus 0}, Ann. of Math.
  (2) \textbf{183} (2016), no.~2, 715--728. \MR{3450486}

\bibitem{bcm3}
F.~Brock, F.~Chiacchio, and A.~Mercaldo, \emph{An isoperimetric inequality for
  {G}auss-like product measures}, J. Math. Pures Appl. (9) \textbf{106} (2016),
  no.~2, 375--391. \MR{3515307}

\bibitem{cvm}
A.~Ca{\~n}ete, M.~Miranda, and D.~Vittone, \emph{Some isoperimetric problems in
  planes with density}, J. Geom. Anal. \textbf{20} (2010), no.~2, 243--290.
  \MR{2579510 (2011a:49102)}

\bibitem{homostable}
A.~Ca{\~n}ete and C.~Rosales, \emph{Compact stable hypersurfaces with free
  boundary in convex solid cones with homogeneous densities}, Calc. Var.
  Partial Differential Equations \textbf{51} (2014), no.~3-4, 887--913.
  \MR{3268875}

\bibitem{cao-li}
H.-D. Cao and H.~Li, \emph{A gap theorem for self-shrinkers of the mean
  curvature flow in arbitrary codimension}, Calc. Var. Partial Differential
  Equations \textbf{46} (2013), no.~3-4, 879--889. \MR{3018176}

\bibitem{castro-rosales}
K.~Castro and C.~Rosales, \emph{Free boundary stable hypersurfaces in manifolds
  with density and rigidity results}, J.~Geom.~Phys. \textbf{79} (2014),
  14--28.

\bibitem{cavalcante-santos}
M.~P. Cavalcante, H.~F. de~Lima, and M.~S. Santos, \emph{On {B}ernstein-type
  properties of complete hypersurfaces in weighted warped products}, Ann. Mat.
  Pura Appl. (4) \textbf{195} (2016), no.~2, 309--322. \MR{3476675}

\bibitem{espinar-halfspace}
M.~P. Cavalcante and J.~M. Espinar, \emph{Halfspace type theorems for
  self-shrinkers}, Bull. Lond. Math. Soc. \textbf{48} (2016), no.~2, 242--250.
  \MR{3483061}

\bibitem{chambers}
G.~R. Chambers, \emph{Proof of the log-convex density conjecture},
  arXiv:1311.4012v3, to appear in J.~Eur.~Math.~Soc.

\bibitem{cheng-wei}
Q.-M. Cheng and G.~Wei, \emph{The {G}auss image of $\lambda$-hypersurfaces and
  a {B}ernstein type problem}, arXiv:1410.5302, October 2014.

\bibitem{cmz}
X.~Cheng, T.~Mejia, and D.~Zhou, \emph{Stability and compactness for complete
  {$f$}-minimal surfaces}, Trans. Amer. Math. Soc. \textbf{367} (2015), no.~6,
  4041--4059. \MR{3324919}

\bibitem{cheng-zhou-volume}
X.~Cheng and D.~Zhou, \emph{Volume estimate about shrinkers}, Proc. Amer. Math.
  Soc. \textbf{141} (2013), no.~2, 687--696. \MR{2996973}

\bibitem{cz-expanders}
\bysame, \emph{Spectral properties and rigidity for self-expanding solutions of
  the mean curvature flows}, Math. Ann. \textbf{371} (2018), no.~1-2, 371--389.
  \MR{3788851}

\bibitem{css}
J.~Clutterbuck, O.~C. Schn{\"u}rer, and F.~Schulze, \emph{Stability of
  translating solutions to mean curvature flow}, Calc. Var. Partial
  Differential Equations \textbf{29} (2007), no.~3, 281--293. \MR{2321890}

\bibitem{colding-minicozzi}
T.~H. Colding and W.~P. Minicozzi~II, \emph{Generic mean curvature flow {I}:
  generic singularities}, Ann. of Math. (2) \textbf{175} (2012), no.~2,
  755--833. \MR{2993752}

\bibitem{cm2}
\bysame, \emph{Smooth compactness of self-shrinkers}, Comment. Math. Helv.
  \textbf{87} (2012), no.~2, 463--475. \MR{2914856}

\bibitem{limaetal}
H.~F. de~Lima, A.~M.~S. Oliveira, and M.~S. Santos, \emph{Rigidity of entire
  graphs in weighted product spaces with nonnegative {B}akry--\'emery--{R}icci
  tensor}, Adv. Geom. \textbf{17} (2017), no.~1, 53--59. \MR{3652232}

\bibitem{delima-santos}
H.~F. de~Lima and M.~S. Santos, \emph{Height estimates and half-space type
  theorems in weighted product spaces with nonnegative {B}akry-\'emery-{R}icci
  curvature}, Ann. Univ. Ferrara Sez. VII Sci. Mat. \textbf{63} (2017), no.~2,
  323--332. \MR{3712444}

\bibitem{doan-bernstein2}
T.~H. Doan, \emph{A simple proof for {B}ernstein type theorems in {G}auss
  space}, arXiv:1803.00278, March 2018.

\bibitem{doan-bernstein}
T.~H. Doan and T.~L. Nam, \emph{Bernstein type theorem for entire weighted
  minimal graphs in {$\Bbb{G}^n\times\Bbb{R}$}}, J. Geom. Phys. \textbf{81}
  (2014), 87--91. \MR{3194217}

\bibitem{ecker-huisken}
K.~Ecker and G.~Huisken, \emph{Mean curvature evolution of entire graphs}, Ann.
  of Math. (2) \textbf{130} (1989), no.~3, 453--471. \MR{1025164}

\bibitem{espinar}
J.~M. Espinar, \emph{Gradient {S}chr{\"o}dinger operators, manifolds with
  density and applications}, J. Math. Anal. Appl. \textbf{455} (2017), no.~2,
  1505--1528. \MR{3671237}

\bibitem{esteve-palmer}
A.~Esteve and V.~Palmer, \emph{On the characterization of parabolicity and
  hyperbolicity of submanifolds}, J. Lond. Math. Soc. (2) \textbf{84} (2011),
  no.~1, 120--136. \MR{2819693}

\bibitem{fan}
E.~M. Fan, \emph{Topology of three-manifolds with positive {$P$}-scalar
  curvature}, Proc. Amer. Math. Soc. \textbf{136} (2008), no.~9, 3255--3261.
  \MR{2407091 (2009c:53050)}

\bibitem{gilbarg-trudinger}
D.~Gilbarg and N.~S. Trudinger, \emph{Elliptic partial differential equations
  of second order}, Springer-Verlag, Berlin-New York, 1977, Grundlehren der
  Mathematischen Wissenschaften, Vol. 224. \MR{0473443}

\bibitem{gimeno-palmer-preprint}
V.~Gimeno and V.~Palmer, \emph{{P}arabolicity, {B}rownian escape rate and
  properness of self-similar solutions of the direct and inverse mean curvature
  flow}, arXiv:1802.09833, February 2018.

\bibitem{giusti}
E.~Giusti, \emph{Minimal surfaces and functions of bounded variation},
  Monographs in Mathematics, vol.~80, Birkh{\"a}user Verlag, Basel, 1984.
  \MR{775682 (87a:58041)}

\bibitem{greene-wu}
R.~E. Greene and H.~Wu, \emph{Function theory on manifolds which possess a
  pole}, Lecture Notes in Mathematics, vol. 699, Springer, Berlin, 1979.
  \MR{521983}

\bibitem{grigoryan}
A.~Grigor'yan, \emph{Analytic and geometric background of recurrence and
  non-explosion of the {B}rownian motion on {R}iemannian manifolds}, Bull.
  Amer. Math. Soc. (N.S.) \textbf{36} (1999), no.~2, 135--249. \MR{1659871
  (99k:58195)}

\bibitem{grigoryan-escape}
\bysame, \emph{Escape rate of {B}rownian motion on {R}iemannian manifolds},
  Appl. Anal. \textbf{71} (1999), no.~1-4, 63--89. \MR{1690091}

\bibitem{grigoryan2}
\bysame, \emph{Heat kernels on weighted manifolds and applications}, The
  ubiquitous heat kernel, Contemp. Math., vol. 398, Amer. Math. Soc.,
  Providence, RI, 2006, pp.~93--191. \MR{2218016 (2007a:58028)}

\bibitem{gri-book}
\bysame, \emph{Heat kernel and analysis on manifolds}, AMS/IP Studies in
  Advanced Mathematics, vol.~47, American Mathematical Society, Providence, RI;
  International Press, Boston, MA, 2009. \MR{2569498}

\bibitem{gri-masa}
A.~Grigor'yan and J.~Masamune, \emph{Parabolicity and stochastic completeness
  of manifolds in terms of the {G}reen formula}, J. Math. Pures Appl. (9)
  \textbf{100} (2013), no.~5, 607--632. \MR{3115827}

\bibitem{grigoryan3}
A.~Grigor'yan and L.~Saloff-Coste, \emph{Hitting probabilities for {B}rownian
  motion on {R}iemannian manifolds}, J. Math. Pures Appl. (9) \textbf{81}
  (2002), no.~2, 115--142. \MR{1994606}

\bibitem{gromov-GAFA}
M.~Gromov, \emph{Isoperimetry of waists and concentration of maps}, Geom.
  Funct. Anal. \textbf{13} (2003), no.~1, 178--215. \MR{MR1978494
  (2004m:53073)}

\bibitem{ho}
P.~T. Ho, \emph{The structure of {$\phi$}-stable minimal hypersurfaces in
  manifolds of nonnegative {$P$}-scalar curvature}, Math. Ann. \textbf{348}
  (2010), no.~2, 319--332. \MR{2672304}

\bibitem{hoffman-meeks}
D.~Hoffman and W.~H. Meeks, \emph{The strong halfspace theorem for minimal
  surfaces}, Invent. Math. \textbf{101} (1990), no.~2, 373--377. \MR{1062966}

\bibitem{hmp-2009}
I.~Holopainen, S.~Markvorsen, and V.~Palmer, \emph{{$p$}-capacity and
  {$p$}-hyperbolicity of submanifolds}, Rev. Mat. Iberoam. \textbf{25} (2009),
  no.~2, 709--738. \MR{2569551}

\bibitem{hp-ppar}
A.~Hurtado and V.~Palmer, \emph{A note on the {$p$}-parabolicity of
  submanifolds}, Potential Anal. \textbf{34} (2011), no.~2, 101--118.
  \MR{2754966}

\bibitem{ipr}
D.~Impera, S.~Pigola, and M.~Rimoldi, \emph{The {F}rankel property for
  self-shrinkers from the viewpoint of elliptic {PDE}'s}, arXiv:1803.02332,
  March 2018.

\bibitem{impera}
D.~Impera and M.~Rimoldi, \emph{Stability properties and topology at infinity
  of {$f$}-minimal hypersurfaces}, Geom. Dedicata \textbf{178} (2015), 21--47.
  \MR{3397480}

\bibitem{liu}
G.~Liu, \emph{{S}table weighted minimal surfaces in manifolds with nonnegative
  {B}akry-{E}mery {R}icci tensor}, Comm.~Anal.~Geom. \textbf{21} (2013), no.~5,
  1061--1079.

\bibitem{rafa}
R.~L\'opez, \emph{Invariant surfaces in {E}uclidean space with a log-linear
  density}, arXiv:1802.07987, February 2018.

\bibitem{mp-GAFA}
S.~Markvorsen and V.~Palmer, \emph{Transience and capacity of minimal
  submanifolds}, Geom. Funct. Anal. \textbf{13} (2003), no.~4, 915--933.
  \MR{2006562}

\bibitem{mp-transience}
\bysame, \emph{How to obtain transience from bounded radial mean curvature},
  Trans. Amer. Math. Soc. \textbf{357} (2005), no.~9, 3459--3479. \MR{2146633
  (2006i:58050)}

\bibitem{MP5}
\bysame, \emph{Extrinsic isoperimetric analysis of submanifolds with curvatures
  bounded from below}, J. Geom. Anal. \textbf{20} (2010), no.~2, 388--421.
  \MR{2579515 (2011c:53131)}

\bibitem{martin}
F.~Mart\'in, A~Savas-Halilaj, and K.~Smoczyk, \emph{On the topology of
  translating solitons of the mean curvature flow}, Calc. Var. Partial
  Differential Equations \textbf{54} (2015), no.~3, 2853--2882. \MR{3412395}

\bibitem{gmt}
F.~Morgan, \emph{Geometric measure theory. {A} beginner's guide}, fourth ed.,
  Elsevier/Academic Press, Amsterdam, 2009. \MR{2455580 (2009i:49001)}

\bibitem{lcdc}
\bysame, \emph{The log-convex density conjecture}, Concentration, functional
  inequalities and isoperimetry, Contemp. Math., vol. 545, Amer. Math. Soc.,
  Providence, RI, 2011, pp.~209--211. \MR{2858534}

\bibitem{oneill}
B.~O'Neill, \emph{Semi-{R}iemannian geometry}, Pure and Applied Mathematics,
  vol. 103, Academic Press, Inc. [Harcourt Brace Jovanovich, Publishers], New
  York, 1983, With applications to relativity. \MR{719023 (85f:53002)}

\bibitem{Pa2}
V.~Palmer, \emph{On deciding whether a submanifold is parabolic of hyperbolic
  using its mean curvature}, Simon Stevin Institute for Geometry, Tilburg, The
  Netherlands, 2010, Simon Stevin Transactions on Geometry, vol 1.

\bibitem{perez-garcia}
J.~P\'erez-Garc\'ia, \emph{Some results on translating solitons of the mean
  curvature flow}, arXiv:1601.07287, January 2016.

\bibitem{pigola-rimoldi}
S.~Pigola and M.~Rimoldi, \emph{Complete self-shrinkers confined into some
  regions of the space}, Ann. Global Anal. Geom. \textbf{45} (2014), no.~1,
  47--65. \MR{3152087}

\bibitem{rosales-gauss}
C.~Rosales, \emph{Isoperimetric and stable sets for log-concave perturbations
  of {G}aussian measures}, Anal. Geom. Metr. Spaces \textbf{2} (2014),
  328--358. \MR{3290382}

\bibitem{rcbm}
C.~Rosales, A.~Ca{\~n}ete, V.~Bayle, and F.~Morgan, \emph{On the isoperimetric
  problem in {E}uclidean space with density}, Calc. Var. Partial Differential
  Equations \textbf{31} (2008), no.~1, 27--46. \MR{2342613 (2008m:49212)}

\bibitem{vieira-zhou}
M.~Vieira and D.~Zhou, \emph{Geometric properties of self-shrinkers in cylinder
  shrinking {R}icci solitons}, J. Geom. Anal. \textbf{28} (2018), no.~1,
  170--189. \MR{3745854}

\bibitem{wang}
L.~Wang, \emph{A {B}ernstein type theorem for self-similar shrinkers}, Geom.
  Dedicata \textbf{151} (2011), 297--303. \MR{2780753}

\end{thebibliography}
\end{document}